\DeclareMathOperator{\Span}{span}
\DeclareMathOperator{\Aut}{Aut}
\DeclareMathOperator{\diag}{diag}
\DeclareMathOperator{\Id}{Id}
\newcommand{\vf}{\varphi}
\def\p{\partial}
\newcommand{\ro}{\mathsf{P}}
\newcommand{\tM}{\widetilde{M}}
\newcommand{\tg}{\widetilde{g}}
\newcommand{\z}{\mathfrak{z}}
\renewcommand{\k}{\mathfrak{k}}
\newcommand{\x}{\veccy{x}}
\newcommand{\y}{\veccy{y}}
\newcommand{\so}{\mathfrak{so}}
\renewcommand{\O}{\mathbf{O}}
\newcommand{\RR}{R}
\newcommand{\W}{W}
\newcommand{\Ric}{Ric}
\newcommand{\scal}{\mathrm{scal}}
\newcommand{\I}{\mathrm{I}}
\newcommand{\tr}{\mathrm{tr}}
\newcommand{\1}{\mathbf{1}}
\newcommand{\e}{\mathrm{e}}
\renewcommand{\d}{\mathrm{d}}
\newcommand{\hg}{\widehat{g}}
\newcommand{\hM}{\widehat{M}}
\newcommand{\hR}{\widehat{\RR}}
\newcommand{\hric}{\widehat{\Ric}}
\newcommand{\hscal}{\widehat{\scal}}
\newcommand{\R}{\mathbb{R}} %Reals
\newcommand{\Z}{\mathbb{Z}} %Integers
\newcommand{\N}{\mathbb{N}} %Natural
\newcommand{\C}{\mathbb{C}} %Complex
\renewcommand{\S}{\mathbb{S}} %Sphere
\DeclareMathOperator{\Homoth}{\mathrm{Homoth}}
\DeclareMathOperator{\Conf}{Conf}
\DeclareMathOperator{\Iso}{\mathrm{Isom}}
\DeclareMathOperator{\id}{id}
\DeclareMathOperator{\E}{\mathbf{Euc}}
\DeclareMathOperator{\Hei}{\mathbf{Hei}}
\newcommand{\<}{\langle}
\renewcommand{\>}{\rangle}
\newcommand{\veccy}[1]{\boldsymbol{#1}} 
\newcommand{\cent}{C}
\theoremstyle{definition}
\newtheorem{definition}{Definition}[section]
\newtheorem{remark}[definition]{Remark}
\newtheorem{example}[definition]{Example}
\theoremstyle{plain}
\newtheorem{lemma}[definition]{Lemma}
\newtheorem*{lem*}{Lemma}
\newtheorem{proposition}[definition]{Proposition}
\newtheorem{corollary}[definition]{Corollary}
\newtheorem{theorem}[definition]{Theorem}
\numberwithin{equation}{section}
\begin{document}

\title
%[Completeness of compact  Lorentzian manifolds]
{Conformal transformations  of Cahen-Wallach spaces }

\date{ \today}
\thanks{
This work was supported by 
 the Australian Research
Council (Discovery Program DP190102360).
 }
\author{Thomas Leistner}\address[Thomas Leistner, corresponding author]{School of Mathematical Sciences, University of Adelaide, SA~5005, Australia, phone: +61 8 83136401}\email{thomas.leistner@adelaide.edu.au}
\author{Stuart Teisseire}\address[Stuart Teisseire]{Department of Mathematics, University of Auckland, Private Bag 92019, Auckland~1142, New Zealand}
\email{stuart.teisseire@gmail.com}

%\thanks{
%The last author was supported by the Australian Research Council via the fellowship FT110100429 and by a Start-Up-Grant of the Faculty of
%Engineering, Computer and Mathematical Sciences of the University of
%Adelaide.
%}
\subjclass[2010]{Primary 
53C50; Secondary 53C18, 53C35, 53A30, 57S20}
\keywords{Lorentzian manifolds, Lorentzian symmetric spaces, conformal geometry, conformal transformations, conformal group actions, cocompact group actions, essential conformal transformations} 

\begin{abstract}
We study conformal transformations of indecomposable Lorentzian symmetric spaces of non-constant sectional curvature, the so-called Cahen-Wallach spaces. When a Cahen-Wallach space is  conformally curved, its conformal transformations are homotheties. Using this we show that a conformal transformation of a conformally curved Cahen-Wallach space is essential if and only if it has a fixed point. Then we explore the possibility of properly discontinuous groups of conformal transformations acting  with a compact orbit space on a conformally curved Cahen-Wallach space. 
We show that any such group cannot centralise an essential homothety and that for Cahen-Wallach spaces of imaginary type must be contained within the isometries.

%
%We show that if the Cahen-Wallach space is conformally curved and of imaginary type any such group must be contained in the isometries and that such a group cannot centralise an essential homothety.
%
%{\blu The final ``such a group'' in the last statement sounds to me like it's referring to a group contained in the isometries, i.e.~like ``...any such group must be contained in the isometries and that [a group contained in the isometries] cannot centralise an essential homothety'', which is obviously not how we want it to sound.
%
%Perhaps it would be clearer to make the two statements in the opposite order, something like: ``We show that any such group cannot centralise an essential homothety, and that for Cahen-Wallach spaces of imaginary type, any such group must be contained in the isometries.'' Although this would be the opposite of how the paper is structured.}
 \end{abstract}

\maketitle
\setcounter{tocdepth}{1}
%\tableofcontents

\section{Introduction}
It is a remarkable feature of Lorentzian geometry that indecomposable Lorentzian symmetric spaces either have constant  sectional curvature or are universally covered by a Cahen-Wallach space, \cite{cahen-wallach70,Cahen98}. A {\em Cahen-Wallach space} is a 
Lorentzian manifold $(\R^{n+2},g_S)$ with $n\ge 1$ and  Lorentzian metric
\begin{equation}\label{cwmetric}g_S=2\d v\d t +S_{ij}x^ix^j\,(\d t)^2+ \delta_{ij}\d x^i \d x^j = 2\d v\d t +\x^\top S\x (\d t)^2+ \d \x^\top \d \x ,\end{equation}
where $(t,x^1, \ldots, x^n,v)=(t,\x,v)$ are coordinates on $\R^{n+2}$ and $S=(S_{ij})$ is a symmetric $(n\times n)$-matrix with non-zero determinant (using Einstein's summation convention). 
Whereas the constant sectional curvature spaces are Einstein and conformally flat, the Cahen-Wallach spaces in general are neither Einstein nor conformally flat. 
Motivated  by the Clifford-Klein program, one mays ask which compact manifolds arise as compact quotients  of an indecomposable Lorentzian symmetric space by a group of isometries. For the constant curvature spaces Calabi and Markus  \cite{CalabiMarkus62} have shown that a group that acts properly discontinuously on de Sitter space must be finite and hence cannot produce a compact quotient. Moreover, Kulkarni has shown \cite{Kulkarni81} that the universal cover of anti-de Sitter space admits a compact quotient if and only if its dimension is odd.
For Cahen-Wallach spaces this question  only recently has been answered by Kath and Olbrich in \cite{KathOlbrich15}, who gave a classification of groups of isometries of a Cahen-Wallach space  that act properly discontinuously and with a compact quotient. Together with the completeness results in \cite{carriere89,klingler96,leistner-schliebner13} this yields a classification of Lorentzian compact indecomposable  locally symmetric spaces.

One may  extend such questions to conformal structures, i.e.~to manifolds equipped with an equivalence class of conformally equivalent metrics. For example, 
one may consider the 
 conformal class of a Cahen-Wallach metric  and ask for groups of conformal transformation that yield interesting and perhaps compact quotients. The resulting manifold would be equipped with a conformal structure that is locally conformally equivalent to a Cahen-Wallach metric. 
The local conformal geometry of Cahen-Wallach spaces, and more generally pseudo-Riemannian symmetric spaces, in particular their Killing vector fields, have been studied in papers by Cahen and Kerbrat \cite{CahenKerbratPraet75,CahenKerbratPraet76,CahenKerbrat77,CahenKerbrat78,CahenKerbrat82}. Questions about the global conformal geometry and in particular the existence of compact conformal quotients to our knowledge have not been considered in the literature and in this paper we are going to address some of these questions.

For our first result, we follow the convention in \cite{KathOlbrich15} and say that a Cahen-Wallach space is of {\em imaginary type} if  $S$ is negative definite.

\begin{theorem}\label{intro-thm_no-homoth-quotients-of-imaginary-CW}
Let $(\R^{n+2},g_S)$ be a conformally curved Cahen-Wallach space of imaginary type and $\Gamma$ a subgroup of its conformal group. If $\Gamma$ acts properly discontinuously and with compact orbit space  $M=\R^{n+2}/\Gamma$, then
			 $\Gamma$ is a group of isometries. Consequently,  $M$ with the metric induced from $g_S$ is  locally isometric to $(\R^{n+2},g_S)$ and  its conformal group is equal to its isometry group.
		\end{theorem}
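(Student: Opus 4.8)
Since $(\R^{n+2},g_S)$ is conformally curved, every conformal transformation is a homothety, so $\Gamma\subseteq\Homoth(g_S)$; writing $\lambda\colon\Homoth(g_S)\to\R_{>0}$ for the dilation homomorphism (with $\ker\lambda=\Iso(g_S)$), the claim is $\lambda(\Gamma)=\{1\}$, and the remaining assertions will follow. Passing to the subgroup of index at most two of elements $\gamma$ with $\gamma^*\,\d t=\d t$ (equivalently $\gamma_*\partial_v\in\R_{>0}\,\partial_v$) --- which again acts properly discontinuously with compact orbit space --- and using that $\R_{>0}$ is torsion free, we may assume $\Gamma$ has this property. Set $\tau(\gamma)=t\circ\gamma-t$, a homomorphism $\Gamma\to\R$, and let $\phi_c\colon(t,\x,v)\mapsto(t,c\x,c^2v)$ be the radial homothety of factor $c$.

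Assume first that $\Gamma$ contains a non-trivial translation in the parallel null direction. Such translations $v\mapsto v+b$ form a normal subgroup $Z\cong\R$ of $\Homoth(g_S)$, and $\phi_c$ conjugates $v\mapsto v+b$ to $v\mapsto v+c^2b$. By proper discontinuity $\Gamma\cap Z$ is discrete, hence generated by some $v\mapsto v+b_0$ with $b_0\neq0$; conjugating by an arbitrary $\gamma\in\Gamma$ then gives $v\mapsto v+\lambda(\gamma)^2b_0\in\Gamma\cap Z$, so $\lambda(\gamma)^2\in\Z$, and applying this also to $\gamma^{-1}$ forces $\lambda(\gamma)^2=1$. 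This disposes of this case.

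Now suppose $\Gamma\cap Z=\{1\}$. A homothety of non-trivial factor has infinite order, so by proper discontinuity no such element of $\Gamma$ has a fixed point. In particular, if $\gamma\in\Gamma$ has $\lambda(\gamma)\neq1$ then $\tau(\gamma)\neq0$: otherwise $\gamma$ preserves every hypersurface $\{t=t_0\}\cong\R^{n+1}$, acting there as $\x\mapsto\lambda(\gamma)B\x+b$ with $B\in\O(n)$ --- a similarity of ratio $\lambda(\gamma)\neq1$, hence with a fixed point --- and in the $v$-direction with slope $\lambda(\gamma)^2\neq1$, so that $\gamma$ would fix a point of $\R^{n+2}$. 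Hence $\lambda$ is trivial on $\Gamma_0=\ker\tau$, and so factors through $\Gamma/\Gamma_0$, which $\tau$ embeds as a finitely generated, hence free abelian, subgroup of $\R$. Suppose, for a contradiction, $\lambda(\Gamma)\neq\{1\}$, and choose $\gamma_1\in\Gamma$ with $0<\lambda(\gamma_1)<1$; then $\tau(\gamma_1)\neq0$, and writing $\gamma_1=\iota_1\circ\phi_{\lambda(\gamma_1)}$ the isometry $\iota_1$ displaces the transverse coordinates according to a solution $f_1$ of $\ddot f=Sf$. Here imaginary type is used decisively: as $S$ is negative definite, $f_1$ and $\dot f_1$ are bounded, and so is the transverse displacement of every isometry in $\Gamma_0$. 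Thus the $\x$-part of $\gamma_1^{\,m}$ is $\x\mapsto\lambda(\gamma_1)^mB^m\x+F_m(t)$ with $\sup_{m\ge0,\,t}|F_m(t)|<\infty$, while the $t$-coordinate is shifted by $m\,\tau(\gamma_1)$. One then argues that a point with bounded $t$ and large $|\x|$ cannot be moved into a fixed compact fundamental domain: the bound on $t$ admits only finitely many powers of $\gamma_1$ (and of the other generators with $\tau\neq0$), whose contraction of $|\x|$ is uniformly bounded, while $\Gamma_0$ acts with bounded transverse displacement and cannot compensate --- contradicting cocompactness. Hence $\lambda(\Gamma)=\{1\}$, that is, $\Gamma\subseteq\Iso(g_S)$.

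The delicate point is precisely this last estimate --- equivalently, ruling out that $\lambda$ is proportional to the time-translation character $\tau$, in which case a conformal rescaling by a suitable power of $e^{t}$ would turn $\Gamma$ into a group of isometries of a different, \emph{incomplete}, plane wave. This is where the hypothesis of imaginary type cannot be dispensed with: for indefinite $S$ one has hyperbolic isometries that may be balanced against a dilation, and genuinely conformal cocompact quotients are to be expected; it is the boundedness of all solutions of $\ddot f=Sf$ that forbids this, either through a comparison of the word metric on $\Gamma$ with displacement in the geometry, or by applying Kath and Olbrich's classification \cite{KathOlbrich15} of properly discontinuous cocompact isometry groups to $\Gamma\cap\Iso(g_S)$. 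Finally, once $\Gamma\subseteq\Iso(g_S)$ is known, $g_S$ descends to $M$, which is thereby locally isometric to $(\R^{n+2},g_S)$, and any conformal transformation of $M$ lifts to a homothety of $(\R^{n+2},g_S)$ normalising $\Gamma$, from which $\Conf(M)=\Iso(M)$ follows.
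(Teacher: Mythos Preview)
Your Case~1 (when $\Gamma$ meets the centre $Z=\{v\mapsto v+b\}$ non-trivially) is correct and rather elegant; it is not in the paper.

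Case~2, however, has a genuine gap that you yourself flag as ``the delicate point''. The assertion that ``$\Gamma_0$ acts with bounded transverse displacement'' is false in the sense you need: each element $\gamma_0\in\Gamma_0$ has a bounded $\beta_{\gamma_0}$, but there is no \emph{uniform} bound over $\Gamma_0$. Indeed, if $\Gamma/\Gamma_0\cong\Z$ then $\Gamma_0$ must itself act cocompactly on a slab $\{0\le t<\tau(\gamma_1)\}\cong[0,c)\times\R^{n+1}$, which forces $\{\beta_{\gamma_0}(0):\gamma_0\in\Gamma_0\}$ to be unbounded. So large $|\x|$ can certainly be absorbed by $\Gamma_0$. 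The companion claim that the bound on $t$ ``admits only finitely many powers'' of the generators with $\tau\neq0$ also fails: if $\operatorname{rank}(\Gamma/\Gamma_0)\ge2$ then $\tau(\Gamma)$ is dense in $\R$, so infinitely many cosets have $\tau$ in a bounded interval; and even for rank one, a word can use such a generator many times with cancellation in $\tau$. The fallback you mention --- applying Kath--Olbrich to $\Gamma\cap\Iso(g_S)$ --- does not apply, since that subgroup need not act cocompactly.

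The paper avoids all of this by never analysing the global structure of $\Gamma$. Given any strict homothety $\gamma\in\Gamma$ (necessarily with $\epsilon_\gamma=1$, $c_\gamma\neq0$ by the fixed-point criterion), one conjugates $\Gamma$ by an isometry so that $\gamma$ has trivial Heisenberg part (Proposition~\ref{prop_conjugating-good-homotheties}; this is where one needs that no $(s_\gamma/c_\gamma)^2$ is an eigenvalue of $S$, automatic in the imaginary case). Then for any $\phi\in\Gamma\setminus\langle\gamma\rangle$ --- which exists since a cyclic group never acts cocompactly (Lemma~\ref{lem_cyclic-isnt-cocompact}) --- one computes $y_k=\gamma^{-k}\phi\gamma^k(0)$ explicitly and finds $y_k\to(c_\phi,0,0)$; the boundedness of $\beta_\phi$ is precisely what makes $e^{-ks_\gamma}\beta_\phi(kc_\gamma)\to0$. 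This produces a non-constant sequence in a single $\Gamma$-orbit converging, contradicting (PD1) directly (Proposition~\ref{prop_good-homoth-quotient-implies-exponential-beta}). For the ``Consequently'' clause, the paper simply invokes Corollary~\ref{locsymcompconf}: $M$ is compact and locally symmetric, hence its conformal maps are homotheties, and compact manifolds admit no strict homotheties by the volume argument.
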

Consequently, the question about compact conformal quotients of Cahen-Wallach spaces of imaginary type is reduced to the case of compact isometric quotients and the results in \cite{KathOlbrich15}.
Our proof of Theorem~\ref{intro-thm_no-homoth-quotients-of-imaginary-CW} relies on an analysis of the group of conformal transformations of a Cahen-Wallach space.
First,  it is straightforward to show that a Cahen-Wallach metric $g_S$ is conformally flat (i.e.~with vanishing Weyl tensor) if and only if the matrix $S$ is a scalar matrix, see Proposition~\ref{Wflatprop}. Since Cahen-Wallach spaces are locally symmetric,  a  result in \cite[Proposition 2.1]{CahenKerbrat82} implies that the conformal group of a conformally curved Cahen-Wallach space is equal to its homothety group. This implies in particular, that for compact quotients by a group of isometries the conformal group is equal to the isometry group, see Corollary~\ref{locsymcompconf}. 
 The homothety group of a Cahen-Wallach space 
  is equal to $\Iso(\R^{n+2},g_S)\rtimes \R$, and 
 the isometry group $\Iso(\R^{n+2},g_S)$ is well-known \cite{cahen-wallach70, KathOlbrich15} to be isomorphic to
 \[
\Hei_n\rtimes \left(\E(1)\times \cent_{\O(n)}(S)\right),\]
where $\Hei_n$ is the $2n+1$-dimensional Heisenberg group, $\E(1)=\R\rtimes \Z_2$ is the Euclidean group in one dimension, and $\cent_{\O(n)}(S)$ is the centraliser of the matrix $S$ in $\O(n)$. For details about the group structure and the result, see Section~\ref{confcw-sec} and Corollary~\ref{curvedconf}.
Explicitly, each homothety of $(\R^{n+2},g_S)$ is given by 
		\begin{equation}\label{homoth}
				%(\beta,b,a,c,A):=
\begin{pmatrix}t\\\veccy{x}\\v\end{pmatrix} \stackrel{\phi}{\longmapsto} \begin{pmatrix}\epsilon\, t+c \\ \e^sA\veccy{x}+\beta(t) \\ \epsilon \left(\e^{2s}v+b-\< \dot\beta(t),A\boldsymbol{x}+\frac12\beta(t)\>\right)\end{pmatrix},
				%\phi\begin{pmatrix}t\\\veccy{x}\\v\end{pmatrix} = \phi_{\beta,b,a,c,A}\begin{pmatrix}t\\\veccy{x}\\v\end{pmatrix} := \begin{pmatrix}at+c \\ A\veccy{x}+\beta(t) \\ a\left(v+b- \dot\beta(t)^\transpose(A\boldsymbol{x}+\frac12\beta(t))\right)\end{pmatrix}	%This version uses transpose instead of \<\>
			\end{equation}
	where $(c,\epsilon)\in\R\times\{\pm 1\}=\E(1)$, $A\in \cent_{\O(n)}(S)$,  $b\in \R$, $\beta:\R\to\R^n$ is a solution to $\beta''=S\beta$ and $\<.,.\>$ is the standard Euclidean inner product on $\R^n$. The boundedness of the functions $\beta$ for  Cahen-Wallach spaces of imaginary type will yield the result in Theorem~\ref{intro-thm_no-homoth-quotients-of-imaginary-CW}. For Cahen-Wallach spaces that are not of imaginary type (i.e.~when $S$ has at least one positive eigenvalue), the method of our proof does not immediately apply, and we are not able to answer the question whether there are properly discontinuous 		groups of homotheties acting with a compact orbit space. In Section~\ref{ex-sec} we illustrate the difficulties that arise when trying to construct an example of such group.

A motivation for studying conformal transformations of Cahen-Wallach spaces  also comes from rigidity questions in conformal geometry, namely the question for which conformal manifolds the group of conformal transformation is {\em essential}, i.e.~not contained in the isometry group of a metric in the conformal class.
Of course, by definition the group of conformal transformations of a semi-Riemannian manifold  is larger than the group of isometries, however,  examples of manifolds with {\em essential conformal transformations} are relatively rare.
In fact, for Riemannian manifolds, Ferrand \cite{Lelong-Ferrand71} and Obata \cite{Obata71} showed that a compact Riemannian manifold with essential conformal transformations must be conformally diffeomorphic to the round sphere. More surprisingly, any non-compact Riemannian manifold with essential conformal transformations must be  conformally diffeomorphic to Euclidean  space, \cite{Ferrand96}. These results confirmed  the {\em Lichnerowicz conjecture}, \cite{Lichnerowicz64}. The conjecture can be extended to conformal structures of indefinite metrics, however already in Lorentzian signature it turns out to be false: there are many non compact  Lorentzian manifolds that are not conformally flat but with essential conformal transformations  \cite{Alekseevski85,Podoksenov89,Podoksenov92,KuhnelRademacher95,KuhnelRademacher97}, and Cahen-Wallach spaces are amongst them.
In Section~\ref{fixpoint-sec} we determine which conformal transformations of a conformally curved Cahen-Wallach space are essential. 
\begin{theorem}\label{intro-esstheo}
Let $\phi$ be a  homothety of a Cahen-Wallach space that is not an isometry. Then the following are equivalent:
\begin{enumerate}
\item $\phi$ is essential;
\item $\phi$ has a fixed point;
\item in equation (\ref{homoth}) for $\phi$ it is $\epsilon=-1$ or $c=0$. 
\end{enumerate}
In particular, when the Cahen-Wallach space is not conformally flat, then every essential conformal transformation is given by a homothety with the above properties.
\end{theorem}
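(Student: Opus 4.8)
The plan is to reduce everything to a single functional equation and then check the three conditions by elementary computations with the explicit form \re{homoth}. Since $\R^{n+2}$ is contractible, every metric in the conformal class of $g_S$ is of the form $\hg=\e^{2f}g_S$ for a globally defined smooth function $f\colon\R^{n+2}\to\R$. From \re{homoth} one has $\phi^*g_S=\e^{2s}g_S$ (the parameter $s$ being the $\R$-factor of the homothety group $\Iso\rtimes\R$), so the hypothesis that $\phi$ is not an isometry means $s\ne0$; moreover $\phi^*\hg=\e^{2(f\circ\phi)}\e^{2s}g_S$, hence $\phi\in\Iso(\hg)$ if and only if $f\circ\phi-f=-s$. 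Thus $\phi$ is \emph{inessential} precisely when this equation admits a smooth solution on $\R^{n+2}$. With this observation I would prove: (2)$\Leftrightarrow$(3) directly; (2)$\Rightarrow$(1) because a fixed point obstructs the functional equation; and $\neg$(2)$\Rightarrow\neg$(1) by exhibiting an explicit solution.

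For (2)$\Leftrightarrow$(3) I would solve $\phi(t_0,\x_0,v_0)=(t_0,\x_0,v_0)$ coordinate by coordinate using \re{homoth}. The first coordinate gives $(1-\epsilon)t_0=c$, which is solvable if and only if $\epsilon=-1$ (then $t_0=c/2$) or $c=0$ (then $t_0$ is arbitrary) --- exactly condition (3). Given such a $t_0$, the $\x$-coordinate requires $(\Id-\e^sA)\x_0=\beta(t_0)$; since $A\in\O(n)$ and $s\ne0$, every eigenvalue of $\e^sA$ has modulus $\e^s\ne1$, so $\Id-\e^sA$ is invertible and $\x_0$ is uniquely determined. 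Finally the $v$-coordinate is an affine equation in $v_0$ with linear coefficient $1-\epsilon\e^{2s}\ne0$, so $v_0$ is determined as well. Hence $\phi$ has a fixed point if and only if (3) holds.

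For the equivalence with (1): if $\phi$ has a fixed point $p$, then evaluating a putative solution of $f\circ\phi-f=-s$ at $p$ forces $0=-s$, a contradiction; so no metric in the conformal class is $\phi$-invariant and $\phi$ is essential, which proves (2)$\Rightarrow$(1). Conversely, suppose (3) --- equivalently (2), by the previous step --- fails, so $\epsilon=1$ and $c\ne0$; then \re{homoth} acts on the first coordinate by $t\mapsto t+c$, and $f(t,\x,v):=-\tfrac{s}{c}\,t$ satisfies $f\circ\phi-f=-s$ irrespective of the action on $\x$ and $v$. So $\e^{2f}g_S$ is a $\phi$-invariant metric in the conformal class and $\phi$ is not essential. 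Together these close the cycle (1)$\Leftrightarrow$(2)$\Leftrightarrow$(3).

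For the final assertion I would use that a conformally curved Cahen-Wallach space is precisely one for which $S$ is not a scalar matrix (Proposition~\ref{Wflatprop}), so by the rigidity result for conformal transformations of locally symmetric spaces \cite[Proposition~2.1]{CahenKerbrat82} (see Corollary~\ref{curvedconf}) its conformal group coincides with its homothety group; thus every conformal transformation has the form \re{homoth}. An essential conformal transformation cannot be an isometry of $g_S$, so it is a homothety with $s\ne0$, and by the equivalence (1)$\Leftrightarrow$(3) it must satisfy $\epsilon=-1$ or $c=0$. The argument is short; the only point that needs care is the reformulation via $f\circ\phi-f=-s$ together with the remark that every conformally equivalent metric on $\R^{n+2}$ is a global rescaling $\e^{2f}g_S$, while the fixed-point analysis and the explicit invariant metric in the fixed-point-free case are routine.
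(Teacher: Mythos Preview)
Your proof is correct. The equivalence (2)$\Leftrightarrow$(3) and the implication (2)$\Rightarrow$(1) follow exactly the paper's route (Proposition~\ref{prop_homothetic-fixed-points} and Proposition~\ref{prop_finite-orbit-homoth-implies-essential}). The interesting difference is in the implication $\neg$(3)$\Rightarrow\neg$(1): the paper (Theorem~\ref{thm_CW-essential-iff-fixed-point}) builds a partition of unity subordinate to the translates of the slab $\{0<t<c\}$ and from it assembles a function $f$ with $f\circ\phi-f=-s$, invoking the more general machinery of \cite{FSApaper,stuart-thesis}. Your choice $f(t,\x,v)=-\tfrac{s}{c}\,t$ solves the same functional equation in one line, since the first coordinate of \re{homoth} is $t\mapsto t+c$ when $\epsilon=1$; this is more elementary and fully exploits the explicit form of a Cahen--Wallach homothety. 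The paper's construction, by contrast, would work for any strict homothety admitting a finitely self adjacent fundamental domain, not just those acting by translation on a single coordinate. One small remark: contractibility of $\R^{n+2}$ is not needed to write a conformal metric as $\e^{2f}g_S$, since any positive smooth function has a global logarithm.
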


Returning to the compact case in the Lichnerowicz conjecture for indefinite metrics, Frances \cite{Frances05} constructed examples of compact Lorentzian manifolds with essential conformal transformations that are not conformally diffeomorphic to the flat model of constant curvature, however, all of the examples constructed by Frances are conformally flat, i.e.~have vanishing Weyl tensor. This leads to the {\em generalised pseudo-Riemannian Lichnerowicz conjecture:} any compact pseudo-Riemannian manifold with essential conformal transformations must have vanishing Weyl tensor. Again Frances constructed counterexamples in all but Lorentzian signature \cite{frances12}, which leaves us with the {\em Lorentzian Lichnerowicz conjecture:} any compact Lorentzian manifolds with essential conformal transformations is conformally flat. This conjecture remains unproven in general until now, although substantial progress has been made \cite{FrancesMelnick10, Pecastaing17,Pecastaing18,Melnick21} and it has recently has been proved for compact real analytic manifolds that are simply connected \cite{MelnickPecastaing21} or of dimension three \cite{FrancesMelnick21}.

The counterexamples found by Frances \cite{frances12}  in signatures beyond the Lorentzian start with a locally symmetric space in signature $(2+p,2+q)$  which  admits a group of homotheties that acts with compact quotient and  centralises an essential homothety, which then descends to the compact quotient manifold. These examples are a motivation for our results in Section~\ref{centralsec}. Here we study whether for a given 
 essential homothety  $\phi$   of a Cahen-Wallach space there is  a group  $\Gamma$ of conformal transformations that acts properly discontinuously  and cocompactly and such that $\phi$ is in the centraliser  of  $\Gamma$.
In this case, $\phi$ would descend to an essential conformal transformation of the compact conformal manifold $M$.
 We will show however, that this is not possible.
				\begin{theorem}\label{thm_main-no-essential-quotient}
For				 a conformally curved Cahen-Wallach space,
			a group of conformal transformations that centralises an essential homothety cannot act properly discontinuously and cocompactly.
		\end{theorem}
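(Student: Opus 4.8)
The plan is to combine the explicit description \eqref{homoth} of homotheties with an analysis of how an essential homothety interacts with its centraliser. By Theorem~\ref{intro-esstheo}, an essential homothety $\phi$ has $\epsilon=-1$ or $c=0$ in \eqref{homoth}; after composing with an isometry we may reduce to a normal form where $\phi$ has a fixed point, and conjugating so that this fixed point is the origin, $\phi$ acts as $(t,\x,v)\mapsto(\epsilon t,\e^sA\x,\epsilon\e^{2s}v)$ with dilation factor $\e^s\neq 1$ (and $A\in\cent_{\O(n)}(S)$, $\epsilon\in\{\pm1\}$). The key geometric point is that such a $\phi$ is a \emph{contraction} (or expansion) toward the fixed point $p=\oo$: iterating $\phi$ or $\phi^{-1}$, every point is pushed toward $p$ in the $(\x,v)$-directions, and $t$ is at worst scaled by $\pm1$. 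I would make this quantitative by exhibiting a suitable "norm-like" function that $\phi$ contracts.

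\medskip

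Next I would exploit that $\Gamma$ centralises $\phi$. Since every $\gamma\in\Gamma$ commutes with $\phi$, the fixed-point set of $\phi$ is $\Gamma$-invariant; more importantly, if $\gamma$ commutes with a contraction $\phi$ toward $p$, then $\gamma(p)$ is a fixed point of $\phi$ as well, and iterating the relation $\gamma\phi^k=\phi^k\gamma$ forces control on $\gamma$ near $p$. The strategy is to show that the orbit $\Gamma\cdot p$ accumulates at $p$ itself — equivalently, that a neighbourhood of $p$ meets infinitely many $\Gamma$-translates of a compact set — contradicting proper discontinuity; \emph{or}, in the opposite regime, to use cocompactness: pick a compact fundamental domain-like set $K$ with $\R^{n+2}=\Gamma\cdot K$, apply $\phi^k$, and observe that $\phi^{-k}\Gamma\phi^k=\Gamma$ so $\R^{n+2}=\Gamma\cdot\phi^{-k}(K)$; as $k\to\infty$ the sets $\phi^{-k}(K)$ either shrink to $p$ or blow up, and in either case covering all of $\R^{n+2}$ by $\Gamma$-translates of an ever-shrinking/ever-expanding compact set is incompatible with proper discontinuity. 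Concretely: if $\phi$ is a contraction, $\phi^{-k}(K)$ exhausts $\R^{n+2}$, so translating a fixed relatively compact open $U$ one needs more and more $\gamma$'s whose translates meet $U$, violating discreteness of $\{\gamma: \gamma U\cap U\neq\emptyset\}$ after pulling back by $\phi^k$.

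\medskip

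To implement this cleanly I would work on the universal setting directly on $\R^{n+2}$ (no covering issues, since a conformally curved Cahen-Wallach space is already simply connected and $\Gamma$ acts on it by the hypothesis), and use the structure of the homothety group $\Iso(\R^{n+2},g_S)\rtimes\R$: write $\gamma\in\Gamma$ in the form \eqref{homoth} with parameters $(c_\gamma,\epsilon_\gamma,A_\gamma,b_\gamma,\beta_\gamma)$ and a dilation exponent $s_\gamma$, and translate the centralising condition $\gamma\phi=\phi\gamma$ into algebraic relations on these parameters. This pins down, for instance, that $\epsilon_\gamma$ and $A_\gamma$ commute with the data of $\phi$ and, crucially, that the dilation exponents $s_\gamma$ are unconstrained by $\phi$ but the translational parts $\beta_\gamma$ and the $c_\gamma$ are forced into the $\phi$-invariant subspace. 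One then shows the subgroup of $\Gamma$ generated by the "bounded" part is too small, resp. the "unbounded" part forces orbits to degenerate, so that no choice yields a properly discontinuous cocompact action. The main obstacle I anticipate is the bookkeeping in the case $\epsilon=-1$ with $c\neq0$ is excluded but the $A$-rotation part of $\phi$ is nontrivial: there the contraction is only a "spiralling contraction" in the $\x$-plane, so the contracted-function argument must be chosen to be genuinely $A$-invariant (e.g.\ $|\x|^2$ rather than a linear functional), and one must simultaneously handle the $v$-coordinate, whose transformation in \eqref{homoth} mixes $\beta$, $\dot\beta$ and $\x$; verifying that the dynamics on $\R^{n+2}$ is still "North–South–like" enough to kill proper discontinuity, uniformly over the centraliser, is the delicate step.
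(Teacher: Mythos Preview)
Your setup is correct: for a conformally curved Cahen-Wallach space every conformal transformation is a homothety, an essential homothety $\eta$ has a fixed point by Theorem~\ref{intro-esstheo}, and after conjugating by an isometry one may assume $\eta$ fixes the origin. The gap is in the dynamical core of your argument. You yourself note that $t$ is ``at worst scaled by $\pm1$'', but then proceed as if $\eta$ were a genuine contraction to the point $p=0$. It is not: when $\epsilon_\eta=1$ (which one may assume after passing to $\eta^2$), the normal form $(t,\x,v)\mapsto(t,\e^sA\x,\e^{2s}v)$ fixes the \emph{entire} $t$-axis, so $\eta^{-k}(K)$ stays inside the slab $\{t\in \mathrm{pr}_t(K)\}$ and never exhausts $\R^{n+2}$. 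The ``North--South'' picture therefore fails, and the argument ``$\phi^{-k}(K)$ exhausts $\R^{n+2}$, so more and more $\gamma$'s are needed to cover a fixed $U$'' does not get off the ground. The alternative you sketch --- showing $\Gamma\cdot 0$ accumulates at $0$ --- is in fact the right target, but you give no mechanism for producing such accumulation; the commutation relation alone tells you only that $\Gamma$ preserves the $t$-axis.

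The paper's proof is algebraic rather than dynamical and supplies exactly this missing mechanism. One first shows (Proposition~\ref{prop_cent-essential-homothety-projection-to-cent-pure-injective}) that the projection $q:\cent_{H_S}(\eta)\to \E(1)\times\cent_{\O(n)}(S)\times\R$ is \emph{injective}, and moreover that for elements $\gamma_n$ of the centraliser the Heisenberg data $(b_n,\beta_n(0))$ are controlled by $c_n$: if $c_n\to0$ then $\gamma_n(0)\to0$. Next, one shows that the further projection $\gamma\mapsto(c_\gamma,s_\gamma)\in\R^2$ is still injective on $\Gamma$ (any element of the kernel lies in the compact group $\Z_2\times\cent_{\O(n)}(S)$, hence is torsion, hence has a fixed point by Proposition~\ref{prop_torsion-implies-fixed-point}, contradicting freeness). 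Thus $\Gamma$ is identified with a discrete subgroup of $\R^2$; since a cyclic group cannot act cocompactly (Lemma~\ref{lem_cyclic-isnt-cocompact}), $\Gamma$ contains a copy of $\Z^2$ generated by $\gamma,\phi$. Now the accumulation is produced arithmetically: take rationals $p_n/q_n\to c_\gamma/c_\phi$; the elements $\gamma^{p_n}\phi^{-q_n}$ have $c$-component tending to $0$, hence $\gamma^{p_n}\phi^{-q_n}(0)\to0$, contradicting (PD1). The crucial idea you are missing is this rigidity of the centraliser (the Heisenberg part is determined by $(c,\epsilon,A,s)$) together with the rational-approximation trick in $\R^2$; a purely dynamical contraction argument cannot see this because $\eta$ simply does not contract transversally to a point.
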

This does not exclude the possibility of compact conformal quotients of Cahen-Wallach spaces with essential conformal transformation. We believe however, that no such quotient exists. 

The structure of the paper is as follows: in Section~\ref{prelimsec} we recall some basic notations and facts from conformal geometry including a short section about group actions; in Section~\ref{cwsec} we give a criterion for conformal flatness, describe the isometries, homotheties and conformal transformations of Cahen-Wallach spaces and prove Theorem~\ref{intro-esstheo}; Section~\ref{compact-sec} contains the proofs of the non-existence results in Theorems~\ref{intro-thm_no-homoth-quotients-of-imaginary-CW} and~\ref{thm_main-no-essential-quotient}. The article concludes with a few examples that illustrate the obstacles for constructing compact conformal quotients.

\subsection*{Acknowledgements} 
Most of the results in this paper are contained in second author's MPhil thesis \cite{stuart-thesis}, which was written under supervision of the first author. We would like to thank Michael~Eastwood for taking the role of co-supervisor and for helpful discussions and comments. The first author would also like to thank Vicente~Cort\'{e}s for inspiring  discussions on the topic and the University of Hamburg for its hospitality.

\section{Preliminaries on conformal geometry}
\label{prelimsec}

\subsection{Curvature conventions and conformal rescalings}
Let $(M,g)$ be a semi-Riemannian manifold of dimension $m$ and $\nabla$ the Levi-Civita connection.
Our convention for the curvature tensors are as follows: the Riemannian curvature as a $2$-form with values in $\so(TM,g)$, or equivalently as a $(1,3)$-tensor,  is defined as
\[
\RR(X,Y)=\left[\nabla_X,\nabla_Y\right]-\nabla_{[X,Y]},\]
and the $(0,4)$-curvature tensor
%\footnote{Note that with these conventions, the $(1,3)$-curvature tensor has the opposite sign to the $(1,3)$-curvature tensor in \cite{besse87} or \cite{oneill83}, but the Ricci tensor and the $(0,4)$ curvature tensor have the same sign.}
as
\[\RR(X,Y,Z,V)=g(\RR(X,Y)V,Z).\]
The Ricci-tensor is the trace of the $(1,3)$-curvature tensor
\[\Ric(Y,Z)=\tr( X\mapsto \RR(X,Y)Z ).\]
We denote the corresponding endomorphism also by $\Ric$ and 
and the scalar curvature $\scal$ as its trace.
Moreover, we define the Schouten tensor $\ro$ by
\[\Ric=(m-2)\ro +\tfrac{\scal}{2(m-1)}g,\]
and the $(0,4)$-Weyl tensor as 
\[\W(X,Y,Z,V)=
\RR(X,Y,Z,V)-g\owedge \ro,
 \]
where $\owedge$ is the {\em Kulkarni-Nomizu product} of two symmetric bilinear forms defined as 
\begin{eqnarray*}
A\owedge B(X,Y,Z,V)&=&
A(X,Z)B(Y,V)+ B(X,Z)A(Y,V)
\\
&&{}
- A(X,V)B(Y,Z)+ B(X,V)A(Y,Z),\end{eqnarray*}
and produces a $(0,4)$-tensor with the same symmetries as the Riemannian curvature tensor.
We define the $(1,3)$-Weyl tensor by \[g(\W(X,Y)Z,V)=W(X,Y,Z,V).\]  
%which yields
%\begin{eqnarray*}
%\W(X,Y)Z&=&- \RR(X,Y)Z
%\\&&{}
%-\tfrac{1}{n-2}\left( \Ric(X,Z)Y-\Ric(Y,Z)X + g(X,Z)\Ric(Y)-g(Y,Z)\Ric(X)\right)
%\\&&{}
%+\tfrac{\scal}{2(n-1)(n-2)}\left(g(X,Z)Y-g(Y,Z)X\right)
%\end{eqnarray*}
We say that  $g$ is {\em Weyl-flat} if the Weyl tensor of $g$ vanishes.

If $\hg=\e^{2f}g$ is a conformally equivalent metric to $g$, where $f$ is a smooth function on $M$, then the Levi-Civita connection, the $(0,4)$-curvature tensor, and the Ricci and scalar curvature change as follows, see \cite[Section 1.J]{besse87},
\begin{equation}
\label{confchange}
\begin{array}{rcl}
\widehat{\nabla}_XY&=&\nabla_XY+\d f( X)Y+\d f ( Y)X-g(X,Y)\nabla f,\\[2mm]
\e^{-2f}\hR&=&\RR -g\owedge\left( \nabla \d f - (\d f)^2 +\tfrac{1}{2}g(\nabla f,\nabla f)g\right),
\\[2mm]
\hric&=&\Ric-(m-2)  \left( \nabla \d f - (\d f)^2 \right) +\left(\Delta f - (m-2) g(\nabla f,\nabla f)\right)g,
\\[2mm]
\e^{2f}\hscal&=&\scal +(m-1)\left( 2\Delta f -(m-2) g(\nabla f,\nabla f)\right),
\end{array}
\end{equation}
whereas the $(1,3)$-Weyl tensor is conformally invariant. Here
 $\nabla f$ is the gradient of $f$ and $\Delta f$ the Laplacian, both with respect to $g$. 
% A semi-Riemannian manifold for which the Weyl tensor vanishes is {\em Weyl-flat}.
We observe the following useful relation.
\begin{lemma}\label{obslem}
If both $g$ and $\hg=\e^{2f}g$ have vanishing scalar curvature then 
\[
\e^{-2f}\hR =\RR+\tfrac{1}{m-2}g\owedge\left(\hric-\Ric\right).\]
\end{lemma}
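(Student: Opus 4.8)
The plan is to read off the identity directly from the conformal change of the $(0,4)$-curvature tensor in~\eqref{confchange} by eliminating the purely second-order-in-$f$ term with help of the Ricci transformation law. First I would rewrite the second line of~\eqref{confchange} so that the unknown quantity is the symmetric bilinear form
\[
T:=\nabla\d f-(\d f)^2+\tfrac12 g(\nabla f,\nabla f)\,g,
\]
giving $\e^{-2f}\hR=\RR-g\owedge T$. The goal is then to express $g\owedge T$ in terms of $\hric-\Ric$, which is what the Kulkarni--Nomizu factor on the right-hand side of the claimed identity requires.

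The key step is to compare $T$ with the Ricci change. From the third line of~\eqref{confchange}, and using that a symmetric bilinear form $B$ satisfies $g\owedge B$ having trace controlled in the usual way, I would contract / match terms to see that
\[
\hric-\Ric=-(m-2)T'+(\text{scalar})\,g,
\]
where $T'=\nabla\d f-(\d f)^2$ differs from $T$ only by the pure-trace term $\tfrac12 g(\nabla f,\nabla f)g$. Crucially, under the Kulkarni--Nomizu product with $g$, adding a multiple $\lambda g$ to a symmetric form $B$ changes $g\owedge B$ by $2\lambda\, g\owedge g$, i.e.\ only by a multiple of $g\owedge g$; so $g\owedge T$ and $g\owedge T'$ agree up to a multiple of $g\owedge g$, and similarly one may absorb the scalar term appearing in $\hric-\Ric$. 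Hence $g\owedge T=-\tfrac{1}{m-2}\,g\owedge(\hric-\Ric)+\mu\, g\owedge g$ for some function $\mu$, and substituting back yields
\[
\e^{-2f}\hR=\RR+\tfrac{1}{m-2}\,g\owedge(\hric-\Ric)-\mu\, g\owedge g.
\]
It remains to show $\mu=0$, and this is exactly where the hypothesis $\scal=\hscal=0$ enters: taking traces of the displayed equation (or equivalently using the fourth line of~\eqref{confchange} together with $\scal=\hscal=0$) forces the coefficient of $g\owedge g$ to vanish.

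The main obstacle — really the only point needing care — is bookkeeping of the pure-trace ($g\owedge g$) terms and the normalisation constants in the Kulkarni--Nomizu product, since several scalar terms ($g(\nabla f,\nabla f)$, $\Delta f$) appear in both the Ricci and scalar transformation laws and must be shown to cancel consistently. A clean way to avoid this is to note that both $\RR$ and $\e^{-2f}\hR$, being curvature tensors of scalar-flat metrics, have Ricci parts determined by $\Ric$ and $\e^{-2f}\hric$ respectively and zero scalar parts, so their difference is $g\owedge(\text{something tracefree-adjusted})$ with no residual $g\owedge g$ component; matching the Ricci parts then pins down the coefficient $\tfrac{1}{m-2}$ with no freedom left. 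Either route gives the stated formula.
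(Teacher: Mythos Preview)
The paper does not actually give a proof of this lemma; it is introduced merely as ``We observe the following useful relation'' and left unproved. Your approach is correct and is the natural one: combine the second and third lines of~\eqref{confchange} and use the scalar-curvature hypothesis to eliminate the residual scalar terms.

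One remark that would streamline your write-up: you do not in fact need to carry a provisional $\mu\,g\owedge g$ term and kill it at the end by a trace argument. The hypothesis $\scal=\hscal=0$ applied to the fourth line of~\eqref{confchange} gives $\Delta f=\tfrac{m-2}{2}\,g(\nabla f,\nabla f)$ directly. Substituting this into the Ricci transformation law yields
\[
\hric-\Ric=-(m-2)\Bigl(\nabla\d f-(\d f)^2+\tfrac12 g(\nabla f,\nabla f)\,g\Bigr)=-(m-2)\,T
\]
\emph{exactly}, not merely modulo a multiple of $g$. Hence $g\owedge T=-\tfrac{1}{m-2}\,g\owedge(\hric-\Ric)$ with no leftover, and the identity follows immediately from $\e^{-2f}\hR=\RR-g\owedge T$.
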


 \subsection{Homotheties, conformal and essential conformal transformations}
 
A {\em conformal diffeomorphism}   between  semi-Riemannian manifolds $(M,g)$ and $(\hM,\hg)$   is a diffeomorphism $\phi:M\to \hM$ for which  there is a smooth function $f$ on $M$ such that 
\[\phi^*\hg=\e^{2f}g.\]
A conformal diffeomorphism for which $f$ is constant is called a {\em homothety}. We call a homothety  {\em strict} if it is not an isometry. We denote by $\Conf(M,g)$ the conformal transformations of $(M,g)$, i.e.~the conformal diffeomorphisms from $(M,g)$ to itself,  by $\Homoth (M,g)$ the homotheties of $(M,g)$, and by $\Iso(M,g)$ the isometries of $(M,g)$.

We say that $(M,g)$ is {\em conformally flat} if each point admits a local conformal diffeomorphism from a neighbourhood into a flat semi-Riemannian manifold. If there is a global conformal diffeomorphism from $M$ to a flat semi-Riemannian manifold, we say that $(M,g)$ {\em globally conformally flat}. All manifolds of dimension $2$ are conformally flat. In dimension $3$, $(M,g)$ is conformally flat if and only if the Cotton tensor $A$ of $g$ vanishes, which is defined as 
\[ 
A(X,Y,Z)=\nabla_Y\ro (Z,X) -\nabla_Z\ro(Y,X).
\] 
When $\dim(M)\ge 4$, $(M,g)$ is  conformally flat if and only if $g$  is Weyl-flat. 
Moreover, if two metrics $g$ and $\hat g$ on $M$ are {\em conformally equivalent}, i.e.~$\hg=\e^{2f}g$, then the identity transformation is a conformal diffeomorphism between $(M,g)$ and $(M,\hg)$. 

Let $(M,g)$ be a semi-Riemannian manifold. The map $\Homoth(M,g)\to \R$ that sends a homothety $\phi$ with $\phi^*g = \e^{2s}g$ to $s$ 
 is a group homomorphism with kernel $\Iso(M,g)$.
 %\begin{proposition}\label{cor_one-homoth-is-all-homoth}
%			Let $(M,g)$ be a semi-Riemannian manifold. Then  
Hence we have that			\[\Homoth(M,g)=\Iso(M,g)\rtimes H,\]  
			for some subgroup $H$ of $\R$.
			Further, if for each $s\in\R$, there is a $h_s$ such that $h_s^*g=\e^{2s}g$, then $\Homoth(M,g)=\Iso(M,g)\rtimes\R$.
%		\end{proposition}
 An important result for our purposes is the following:
 \begin{theorem}[{Cahen \& Kerbrat \cite[Proposition 2.1]{CahenKerbrat82}}]\label{prop_cahen-kerbrat-weyl-curved-implies-no-conformals}
			Let $(M,g)$ be a connected semi-Riemannian manifold of dimension $m\geq 4$ with parallel Weyl tensor, $\nabla W=0$.
			Let $U\subset M$ be open and  $\phi:U\to \phi(U)\subset M$ be a conformal diffeomorphism. Then $\phi$ is a homothety or the Weyl tensor is identically zero.
 \end{theorem}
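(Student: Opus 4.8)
The plan is to convert the statement into a pointwise algebraic identity for the Weyl tensor of $g$ and then to exploit its symmetries together with the hypothesis $m\geq 4$.

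\emph{Reduction to an algebraic identity.}
Write $\phi^*g=\e^{2f}g$ on $U$ and put $\hg=\e^{2f}g$, so that $\phi\colon(U,\hg)\to(\phi(U),g)$ is an isometry. The $(1,3)$-Weyl tensor is conformally invariant, so on $U$ the Weyl tensor of $\hg$ equals the Weyl tensor $\W$ of $g$; and since $\phi$ is an isometry while $\nabla\W=0$ on all of $M$, pulling back the parallelism gives $\hnab\W=0$ on $U$, where $\hnab$ is the Levi-Civita connection of $\hg$. By \re{confchange} the connections $\hnab$ and $\nabla$ differ by the $(1,2)$-tensor $C$ with $C(X,Y)=\d f(X)\,Y+\d f(Y)\,X-g(X,Y)\,\nabla f$, and because for each $X$ the endomorphism $C(X,\cdot)$ acts on $\W$ as a derivation of tensors, the two identities $\nabla\W=0=\hnab\W$ force $C(X,\cdot)$ to annihilate $\W$ at every point of $U$. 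Spelling this out with the abbreviation $\xi:=\d f$ yields an explicit homogeneous algebraic relation between $\xi$ and $\W$, valid pointwise on $U$.

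\emph{A trace and the pointwise conclusion.}
I would first contract this relation by tracing the contravariant slot of $\W$ against the $X$-slot; using the total $g$-trace-freeness of $\W$ and the first Bianchi identity, all terms collapse to $(m-3)\,\W(\cdot,\cdot,\cdot,\nabla f)=0$, so, since $m\geq 4$, the Weyl tensor vanishes whenever $\nabla f$ is inserted into any slot. Substituting this back, the original relation simplifies considerably. Now fix $p\in U$. If $\d f\equiv 0$ on $U$ then $f$ is locally constant and $\phi$ is a homothety, so assume $(\d f)_p\neq 0$, hence $(\nabla f)_p\neq 0$. If $(\nabla f)_p$ is non-null, substituting $X=(\nabla f)_p$ into the simplified relation leaves only $g(\nabla f,\nabla f)\,\W_p=0$, whence $\W_p=0$. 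If $(\nabla f)_p$ is lightlike, choose $N\in T_pM$ with $g(\nabla f,N)=1$; then $P:=\Span\{(\nabla f)_p,N\}$ is automatically a non-degenerate plane, its $g$-orthogonal complement $E:=P^\perp$ satisfies $\xi_p|_E=0$, and contracting the simplified relation with $N$ writes $\W_p$ as a sum of terms each carrying a scalar factor $\xi_p(\cdot)$ and one copy of $N$ in a slot of $\W$. Evaluating this on a basis of $T_pM$ adapted to $T_pM=P\oplus E$ kills every component: those with $(\nabla f)_p$ in some slot by the trace step, those with two copies of $N$ in a skew pair by antisymmetry, and for all remaining ones the relation reads $\W_p=\lambda\,\W_p$ with $\lambda\in\{0,-\tfrac{1}{2},-1\}$. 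Hence $\W_p=0$.

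\emph{Globalisation.}
Thus $\W$ vanishes on the nonempty open set $\{\,p\in U:(\d f)_p\neq 0\,\}$; since $\nabla\W=0$ and $M$ is connected, parallel transport propagates this to $\W\equiv 0$ on $M$, which is the claim.

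\emph{The main obstacle.}
The real difficulty is the pointwise step in the lightlike case — which is precisely the case relevant for pp-wave type metrics such as the Cahen-Wallach spaces — because there $g(\nabla f,\nabla f)=0$ and the naive substitution $X=\nabla f$ is vacuous, so one is forced to use the null-frame splitting $T_pM=\Span\{\nabla f,N\}\oplus E$. In Riemannian signature the pointwise step is immediate, and in fact the soft argument comparing the squared norm $|\W|^2$ of the (parallel, hence constant-norm) Weyl tensor computed with $g$ and with $\hg=\e^{2f}g$ — which forces $f$ to be constant as soon as $|\W|^2\neq 0$ — already suffices there; that shortcut is useless in our setting, however, since every scalar polynomial curvature invariant of a Cahen-Wallach space vanishes.
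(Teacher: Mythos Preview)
The paper does not supply its own proof of this statement; it is quoted verbatim as a result of Cahen and Kerbrat \cite[Proposition~2.1]{CahenKerbrat82} and then used as a black box to derive Corollaries~\ref{cor_locally-symmetric-implies-Cahen-Kerbrat-result} and~\ref{locsymcompconf}. So there is nothing in the paper to compare your argument against.

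That said, your argument is correct and is essentially the standard one. The reduction step is clean: conformal invariance of the $(1,3)$-Weyl tensor together with the fact that $\phi\colon(U,\e^{2f}g)\to(\phi(U),g)$ is an isometry turns $\nabla\W=0$ into $\hnab\W=0$, so the connection--difference tensor $C$ annihilates $\W$ as a derivation. The trace you describe does indeed collapse, after one use of the first Bianchi identity and the antisymmetries, to $(m-3)\,\W(\nabla f,\cdot,\cdot,\cdot)=0$; the non-null case then follows in one line from substituting $X=\nabla f$. In the null case your write-up is compressed but the mechanism is sound: with $X=N$ the simplified relation reads
\[
2\,\W(Y,Z)V \;=\; -\,g(N,\W(Y,Z)V)\,\nabla f \;-\;\xi(Y)\,\W(N,Z)V\;-\;\xi(Z)\,\W(Y,N)V\;-\;\xi(V)\,\W(Y,Z)N,
\]
and running $Y,Z,V$ through the adapted basis $\{\nabla f,N\}\cup E$ produces, for each surviving component, a self-referential identity with a nonzero coefficient (your ``$\W_p=\lambda\W_p$'' with $\lambda\neq 1$), forcing that component to vanish. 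The globalisation via parallel transport is immediate.

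Two small remarks. First, your dichotomy ``$\d f\equiv 0$ on $U$ $\Rightarrow$ $\phi$ is a homothety'' tacitly uses that $U$ is connected; this is harmless for the applications in the paper but is worth stating. Second, your closing comment is well taken: the shortcut via the scalar $|\W|^2$ is genuinely unavailable here because all polynomial curvature invariants of a Cahen--Wallach metric vanish, which is exactly why the null-frame analysis is the crux in Lorentzian signature.
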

Since  $\nabla\RR=0$ implies that $\nabla\W=0$, we obtain:
 \begin{corollary}\label{cor_locally-symmetric-implies-Cahen-Kerbrat-result}
			Let $(M,g)$ be a connected, locally symmetric, semi-Riem\-an\-nian manifold of dimension $m\geq 4$.
			Let $U\subset M$ be open, and let $\phi:U\to \phi(U)\subset M$ be a conformal diffeomorphism. Then $\phi$ is a homothety, or the Weyl tensor is identically zero.
		\end{corollary}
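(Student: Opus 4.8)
The plan is to derive Corollary~\ref{cor_locally-symmetric-implies-Cahen-Kerbrat-result} as an immediate consequence of Theorem~\ref{prop_cahen-kerbrat-weyl-curved-implies-no-conformals} by verifying that local symmetry implies the hypothesis $\nabla W = 0$. The only nontrivial point is that a locally symmetric semi-Riemannian manifold, defined by $\nabla R = 0$, has parallel Weyl tensor; once this is established, the conclusion follows verbatim from the preceding theorem with no further work.

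First I would recall that the Weyl tensor is built algebraically from the Riemann curvature tensor $R$ and the Schouten tensor $\ro$ via $W = R - g \owedge \ro$ (in the $(0,4)$-picture), and that $\ro$ in turn is an algebraic combination of the Ricci tensor $\Ric$, the scalar curvature $\scal$, and the metric $g$, namely $\Ric = (m-2)\ro + \tfrac{\scal}{2(m-1)} g$. Since the metric $g$ is parallel ($\nabla g = 0$) and both $\Ric$ and $\scal$ are obtained from $R$ by metric contractions, which commute with $\nabla$, the hypothesis $\nabla R = 0$ forces $\nabla \Ric = 0$ and $\nabla \scal = 0$, hence $\nabla \ro = 0$. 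Because the Kulkarni-Nomizu product is $\nabla$-parallel when its two arguments are (again using $\nabla g = 0$ and the Leibniz rule), we get $\nabla(g \owedge \ro) = 0$, and therefore $\nabla W = \nabla R - \nabla(g \owedge \ro) = 0$.

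With $\nabla W = 0$ in hand, and noting that $(M,g)$ is connected of dimension $m \geq 4$ by assumption, Theorem~\ref{prop_cahen-kerbrat-weyl-curved-implies-no-conformals} applies directly to any conformal diffeomorphism $\phi : U \to \phi(U) \subset M$ defined on an open subset $U$, yielding that $\phi$ is a homothety or the Weyl tensor vanishes identically. This is exactly the assertion of the corollary.

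There is no real obstacle here; the statement is purely a specialisation. If anything, the only thing worth being careful about is the standard-but-worth-stating fact that metric contractions and the Kulkarni-Nomizu product commute with covariant differentiation, which is where $\nabla g = 0$ is used, and the observation that these are the steps reducing $\nabla R = 0$ to $\nabla W = 0$. Everything else is a direct invocation of the previous theorem.
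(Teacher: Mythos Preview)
Your proof is correct and follows exactly the paper's approach: the paper simply states ``Since $\nabla\RR=0$ implies that $\nabla\W=0$, we obtain [the corollary]'' in one line, and your argument merely unpacks this implication by tracing how parallelity of $R$ propagates through the Ricci, scalar, Schouten, and Kulkarni--Nomizu constructions to give $\nabla W=0$. There is nothing more to it.
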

	If $\phi$ is a homothety with $\phi^*g=\e^{2s}g$, the volume form $\nu$  of  a semi-Riemannian manifold satisfies 
	that $\phi^*\nu =\e^{ms}\nu$. Hence, 
		 compact semi-Riemannian manifolds cannot have any strict homotheties (see for example \cite{Alekseevski85}). This yields another result, which we could not find in the literature.
 \begin{corollary}\label{locsymcompconf}
			Let $(M,g)$ be a connected, compact and locally symmetric semi-Riem\-an\-nian manifold of dimension $m\geq 4$. Then the conformal group is equal to its isometry group.
			\end{corollary}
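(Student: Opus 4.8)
The plan is to combine the rigidity dichotomy of Corollary~\ref{cor_locally-symmetric-implies-Cahen-Kerbrat-result} with the elementary fact, recalled just above, that a compact semi-Riemannian manifold admits no strict homotheties. Concretely, let $\phi\in\Conf(M,g)$ be arbitrary and write $\phi^*g=\e^{2f}g$ for a smooth function $f$ on $M$. Applying Corollary~\ref{cor_locally-symmetric-implies-Cahen-Kerbrat-result} with $U=M$, one of two things occurs: either $\phi$ is a homothety, i.e.\ $f\equiv s$ is constant, or the Weyl tensor $\W$ of $g$ vanishes identically on $M$.

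In the first case I would finish as follows. Since $\phi$ is a homothety with $\phi^*g=\e^{2s}g$, it rescales the semi-Riemannian volume density $\nu$ by $\phi^*\nu=\e^{ms}\nu$. Integrating this identity over the compact manifold $M$ yields $\e^{ms}\,\mathrm{vol}(M,g)=\mathrm{vol}(M,g)$ with $\mathrm{vol}(M,g)>0$, hence $s=0$ and $\phi\in\Iso(M,g)$. As $\Iso(M,g)\subseteq\Conf(M,g)$ always holds, this gives $\Conf(M,g)=\Iso(M,g)$.

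The step that genuinely requires care, and which I expect to be the main obstacle, is the second alternative $\W\equiv 0$, i.e.\ $(M,g)$ conformally flat: here Corollary~\ref{cor_locally-symmetric-implies-Cahen-Kerbrat-result} provides no control over $\phi$ at all, and the stated conclusion can in fact fail in this case (for instance the round sphere $S^m$, or the Einstein universe with its product locally symmetric metric, are both connected, compact, locally symmetric of dimension $m\ge 4$ and carry essential conformal transformations). Thus the hypothesis tacitly needed for the conclusion is that $(M,g)$ is \emph{not} conformally flat, equivalently that $\W$ does not vanish identically; this is exactly the situation in which the corollary is applied, namely to conformally curved Cahen-Wallach spaces. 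Under that additional assumption the first case above is forced and the two-line argument goes through, whereas without it one would have to rule out essential conformal transformations of compact conformally flat locally symmetric spaces directly, which is not possible in general.
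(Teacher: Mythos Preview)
Your approach is exactly the one the paper intends: the sentence immediately preceding the corollary records that compact semi-Riemannian manifolds admit no strict homotheties (via the volume argument you give), and together with Corollary~\ref{cor_locally-symmetric-implies-Cahen-Kerbrat-result} this forces every conformal transformation to be an isometry \emph{provided} the Weyl tensor does not vanish identically.

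You are also right to flag the second alternative. The corollary as stated in the paper is too strong: the round sphere $S^m$ (or, in Lorentzian signature, the Einstein universe $S^1\times S^{m-1}$) is connected, compact, locally symmetric, of dimension $m\ge 4$, conformally flat, and has essential conformal transformations, so $\Conf\neq\Iso$. The dichotomy of Corollary~\ref{cor_locally-symmetric-implies-Cahen-Kerbrat-result} gives no information in the Weyl-flat branch, and indeed no argument can close that branch in general. The paper's implied proof shares this gap; the statement really needs the hypothesis that $(M,g)$ is not conformally flat (equivalently $\W\not\equiv 0$), which is precisely the situation in which the corollary is later applied to quotients of conformally curved Cahen-Wallach spaces. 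Under that extra hypothesis your two-line argument is complete and matches the paper.
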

			
 A conformal transformation $\phi$ on a semi-Riemannian manifolds $(M,g)$ is called {\em essential} if there is no conformally equivalent metric on $M$ for which $\phi$ is an isometry. Similarly, the conformal group $\Conf(M,g)$ is called {\em essential} if there s no conformally equivalent metric $\hg$ on $M$ for which $\Conf(M,g)$ is contained in the isometries of $(M,\hg)$. Clearly, if $(M,g)$ admits an essential conformal transformation, then its conformal group is essential, however the converse, that an essential conformal group contains an essential conformal transformation,  is not obvious.
 For Riemannian conformal structures this implication follows from from the confirmed Lichnerowicz conjecture \cite{Lelong-Ferrand71,Obata71,Ferrand96}.

For homotheties, there is a  sufficient condition for being essential.
		\begin{proposition}\label{prop_finite-orbit-homoth-implies-essential}
			Let $(M,g)$ be a semi-Riemannian manifold and let $\phi$ be a strict homothety  with a finite orbit 
			%\Tcom{finite orbit point is bad: k-cycle ... I think finite orbit point is good, the orbit of p is finite }
			point $p$, i.e.~ for some $k>0$, $\phi^k(p)=p$. Then $\phi$ is essential. In particular, a strict homothety with fixed point is essential.
		\end{proposition}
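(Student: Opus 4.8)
The plan is to argue by contradiction, exploiting the functional equation that the conformal factor would have to satisfy. Suppose $\phi$ were \emph{not} essential. Then there is a conformally equivalent metric $\hg=\e^{2f}g$, for some smooth $f\colon M\to\R$, with $\phi^*\hg=\hg$. On the other hand, $\phi$ being a homothety means $\phi^*g=\e^{2s}g$ for some $s\in\R$, and being \emph{strict} means $s\neq 0$. Since the pullback interacts with a conformal rescaling by $\phi^*(\e^{2f}g)=\e^{2(f\circ\phi)}\phi^*g$, the isometry condition $\phi^*\hg=\hg$ reads $\e^{2(f\circ\phi)}\e^{2s}g=\e^{2f}g$, i.e.
\[
f\circ\phi=f-s.
\]

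Next I would iterate this identity. By induction $f\circ\phi^{k}=f-ks$ for every integer $k\geq 1$. Evaluating at the finite-orbit point $p$, where $\phi^{k}(p)=p$ for some $k>0$, gives $f(p)=f(\phi^{k}(p))=f(p)-ks$, hence $ks=0$. Since $k>0$, this forces $s=0$, contradicting strictness. Therefore no conformally equivalent metric can make $\phi$ an isometry, so $\phi$ is essential. The last assertion of the proposition is simply the case $k=1$, i.e.\ when $p$ is an honest fixed point of $\phi$.

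There is no real obstacle here: the argument needs no regularity or compactness beyond smoothness of $f$, and the only thing to be careful about is the bookkeeping of how the conformal factor transforms under pullback. One may equally phrase the proof via volume forms — if $\phi$ were an isometry of $\hg=\e^{2f}g$ it would preserve $\nu_{\hg}=\e^{mf}\nu_g$, while $\phi^*\nu_g=\pm\e^{ms}\nu_g$, leading again to $f\circ\phi=f-s$ and the same contradiction at $p$ — but the functional-equation version above is the cleanest.
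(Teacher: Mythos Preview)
Your proof is correct and is essentially the same argument as the paper's: assume $\phi$ is an isometry of $\e^{2f}g$, derive (via the pullback of the conformal factor) that $\e^{2f(p)}g|_p=\e^{2ks}\e^{2f(p)}g|_p$ at the periodic point, and conclude $s=0$. The only cosmetic difference is that the paper applies $(\phi^k)^*$ in one step rather than first writing $f\circ\phi=f-s$ and iterating.
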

		\begin{proof}
			Let $\phi^*g = \e^{2s}g$ for $s\in\R$.
Assume $\phi^k(p)=p$ for $p\in M$ and that $\phi$ is not essential: let $f$ be a smooth function on $M$ such that $\phi$ is an isometry of $\e^{2f}g$.
			Then we evaluate at $p$,
			\begin{align*}
				(\e^{2f}g)|_p = (\phi^k)^*(\e^{2f}g)|_p = \e^{2f\circ \phi^k}(p)((\phi^k)^*g)|_p = \e^{2ks}\e^{2f(p)}g|_p = \e^{2ks}(\e^{2f}g)|_p.
			\end{align*}
			But  this implies $s=0$, and so $\phi$ is an isometry.
		\end{proof}
We will later see that for Cahen-Wallach spaces also the converse holds, so that the essential homotheties are {\em exactly} those with a fixed point.

\begin{remark}
In \cite[Theorem 2.1]{Alekseevski85} the converse of Proposition~\ref{prop_finite-orbit-homoth-implies-essential} is claimed for causal Lorentzian manifolds, i.e.~that every fixed point free homothety of a causal Lorentzian manifold is inessential. This relied on a claim in \cite{Alekseevskii72}, the proof of which has a gap. An example of an  essential homothety without fixed point on a causal Lorentzian manifold is easily constructed.
\end{remark}

\subsection{Properly discontinuous cocompact and conformal group actions}
	Let $\Gamma$ be a group of diffeomorphism acting on a smooth manifold $M$. The group action is \emph{properly discontinuous} if it satisfies the following two conditions
			\begin{itemize}
				\item[(PD1)] For each point $x\in M$, there is a neighbourhood $U$ of $x$ such that if $\gamma U$ meets $U$, i.e.~$\gamma U\cap U\neq\emptyset$ for $\gamma\in \Gamma$, then $\gamma=e$, where $e$ is the identity element. 
				\item[(PD2)] For all pairs of points $x,y\in M$ in different orbits, there are neighbourhoods $U$ of $x$, and $V$ of $y$, such that for all $\gamma\in \Gamma$, $\gamma U$ and $V$ are disjoint, i.e.~$\gamma U\cap V=\emptyset$.
			\end{itemize}
%			We can equivalently phrase these in terms of the map $\Theta:G\times M\to M $ that defines the action:
%			\begin{itemize}
%				\item PD1: For each point $x\in M$, there is a neighbourhood $U$ of $x$ such that $\Theta^{-1}(U\times U) = \{\id\}\times U$.
%				\item PD2: For all pairs of points $x,y\in M$ in different orbits, there are neighbourhoods $U$ of $x$, and $V$ of $y$, such that $\Theta^{-1}(U\times V)$ is empty.
%			\end{itemize}
Clearly, (PD1) implies that $\Gamma$  acts freely, that is, its elements act without fixed points. If a group $\Gamma$ acts properly discontinuously on a manifolds $M$, then there is a unique smooth manifold structure on the orbit space $M/\Gamma$ and $\pi:M\to M/\Gamma$ is a covering map, see for example \cite[Proposition 7 in Chapter 7]{oneill83}.

If a group $\Gamma$ acts by diffeomorphisms on $M$ such that the orbit space $M/\Gamma$ is compact we say that $\Gamma$ acts {\em cocompactly}. 
If a group $\Gamma$ acts by diffeomorphisms on $M$ and admits a fundamental region with compact closure, then $M/\Gamma$ is compact.  The converse is not true in general in the sense that a $\Gamma$ acting on $M$  can have a compact orbit space but admits  fundamental domains with non-compact closures. 
 For metric spaces the converse holds if the fundamental region is assumed to be locally finite \cite[Chapter 6]{Ratcliffe06}. 
In \cite{FSApaper,stuart-thesis} we prove a converse that holds also for non-isometric (in the metric space sense) group actions, but requires the following, stronger assumption on the fundamental region (we state it here for smooth actions): if a group $\Gamma$ acts smoothly on a manifold $M$, then we call  a fundamental region $R$  {\em  finitely self adjacent} if there is a neighbourhood $U$ of the closure $\overline{R}$ of $R$, such that $\gamma(U)$ meets $U$ for only finitely many $\gamma\in \Gamma$. Note that a finitely self adjacent fundamental domain is necessarily locally finite, i.e.~$\{\gamma(\overline{R})\}_{\gamma\in \Gamma}$ is a locally finite family of sets.
From \cite{FSApaper,stuart-thesis} we obtain:
\begin{lemma}\label{FSAlemma}
Let $\Gamma$ be a group of diffeomorphisms acting on a manifold $M$ such that the topological space $M/\Gamma$ is compact. If $R$ is a finitely self adjacent fundamental region, then its closure must be compact.
\end{lemma}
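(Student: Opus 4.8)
\textbf{Proof plan for Lemma~\ref{FSAlemma}.}

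The plan is to prove the contrapositive in the form: a finitely self adjacent fundamental region $R$ whose closure $\overline R$ is non-compact forces $M/\Gamma$ to be non-compact. First I would unwind the definition: since $R$ is finitely self adjacent, there is an open neighbourhood $U\supseteq\overline R$ such that the set $F=\{\gamma\in\Gamma:\gamma(U)\cap U\neq\emptyset\}$ is finite. Because $\overline R$ is non-compact (and $M$ is a manifold, hence locally compact, second countable and metrizable), I can choose a sequence $(p_k)_{k\in\N}$ in $\overline R$ with no convergent subsequence in $M$; passing to a subsequence I may assume the $p_k$ leave every compact set. I then want to show the images $\pi(p_k)\in M/\Gamma$ also have no convergent subsequence, which contradicts compactness of $M/\Gamma$.

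The key step is to transfer the ``escaping to infinity'' behaviour from $M$ to $M/\Gamma$. Suppose for contradiction that, after passing to a subsequence, $\pi(p_k)\to \bar q$ in $M/\Gamma$ for some $\bar q=\pi(q)$, $q\in M$. Since $R$ is a fundamental region its closure $\overline R$ meets every orbit, so I may take $q\in\overline R$. Now I would use that $\pi$ is an open map together with the fact that a fundamental region covers $M$ up to $\Gamma$-translates: choose a relatively compact open neighbourhood $V$ of $q$ with $\overline V\subseteq U$; then $\pi(V)$ is an open neighbourhood of $\bar q$ in $M/\Gamma$, so for $k$ large $\pi(p_k)\in\pi(V)$, i.e.\ there exist $\gamma_k\in\Gamma$ with $\gamma_k(p_k)\in V$. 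Since $p_k\in\overline R\subseteq U$ and $\gamma_k(p_k)\in V\subseteq U$, we have $\gamma_k(U)\cap U\neq\emptyset$, hence $\gamma_k\in F$, a finite set. Passing to a further subsequence, $\gamma_k=\gamma$ is constant, so $p_k=\gamma^{-1}(\gamma_k(p_k))\in\gamma^{-1}(V)$, which is relatively compact in $M$; thus $(p_k)$ has a convergent subsequence in $M$, contradicting the choice of $(p_k)$.

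Therefore no subsequence of $(\pi(p_k))$ converges in $M/\Gamma$. Since $M$ is a manifold, $M/\Gamma$ is second countable (as a quotient of a second countable space by a covering action) and metrizable when the action is properly discontinuous; in any case sequential compactness and compactness agree for the relevant spaces here, so the existence of a sequence with no convergent subsequence contradicts the hypothesis that $M/\Gamma$ is compact. This completes the proof.

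\textbf{Main obstacle.} The delicate point is the step ``lift convergence in $M/\Gamma$ back to boundedly many group elements'': it is exactly here that finite self adjacency is used, and one must be careful that the lifts $\gamma_k$ of the approximating points can be arranged so that $\gamma_k(p_k)$ lands in a set contained in $U$ while $p_k$ itself lies in $U$ — this is what pins $\gamma_k$ inside the finite set $F$. A secondary point to handle cleanly is the interplay of openness of $\pi$, the covering property of the fundamental region (so that $q$ may be taken in $\overline R\subseteq U$), and the topological niceties (local compactness, second countability) that let one argue with sequences; none of these is hard, but they need to be invoked in the right order.
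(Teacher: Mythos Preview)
The paper does not actually include a proof of this lemma; it merely cites it from \cite{FSApaper,stuart-thesis}, so there is no in-paper argument to compare against. Your argument is correct and is essentially the natural proof: exploit finite self adjacency to trap the group elements $\gamma_k$ relating $p_k$ to points near $q$ inside a finite set, then extract a constant subsequence to force $(p_k)$ into a relatively compact set.

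One small clean-up: you do not need metrizability of $M/\Gamma$, and indeed without proper discontinuity the quotient need not even be Hausdorff. What you actually use is that compactness implies sequential compactness, and for this first countability suffices. Since the orbit map $\pi$ is open and continuous and $M$ is second countable, $M/\Gamma$ is first countable (push forward a countable neighbourhood base at any preimage), so compact $\Rightarrow$ countably compact $\Rightarrow$ sequentially compact goes through without invoking metrizability or proper discontinuity. With that adjustment your ``secondary point'' disappears and the proof is complete as written.
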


Now let $(\tM,\tg)$ be a semi Riemannian manifold and $\Gamma$ a group that acts properly discontinuously on $\tM$. If $\Gamma$ is contained  isometry group of $(\tM,\tg)$, then the orbit space $M=\tM/\Gamma$ is equipped with a unique semi-Riemannian metric $g$ such that $\pi^*g=\tg$, where $\pi:\tM\to M$ is the covering map.  Similarly, when $\Gamma$ is a group of conformal transformation, the orbit space $M=\tM/\Gamma$ is equipped with a conformal structure $\mathbf{c}$ such that $\pi:\tM\to M$ is a conformal covering map, that is, $\pi$ is a covering map and for each $g\in \mathbf{c}$ there is a function $f\in C^\infty(\tM)$ such that $\pi^*g=\e^{2f}\tg$.  Note that the original metric $\tg$ on $\tM$ in general is not a lift of a metric in $\mathbf{c}$. This is only the case if $\Gamma$ is consists of isometries.
%Note that there may be no representative $g\in \mathbf{c}$ such that $\pi^*g=\tg$.  
For more details and results, see \cite{stuart-thesis}.

\section{Conformal transformations of Cahen-Wallach spaces}
\label{cwsec}
\subsection{Conformal flatness of Cahen-Wallach space} 
%{\red In \cite{cahen-wallach70} M.~Cahen and N.~Wallach have shown that an indecomposable simply connected Lorentzian symmetric space either has constant curvature or is isometric to  a {\em Cahen-Wallach space}, which is defined as a Lorentzian manifold $(M,g)$, with  \[M=\R^{n+2},\quad n\ge 1,\] and with Lorentzian metric
%\[g_S=2\d v\d t +(S_{ij}x^ix^j)(\d t)^2+ \delta_{ij}\d x^i \d x^j = 2\d v\d t +\x^\top S\x (\d t)^2+ \d \x^\top \d \x ,\]
%where $(v,x^1, \ldots, x^n,t)=(v,\x,t)$ are coordinates on $\R^{n+2}$ and $S=(S_{ij})$ is a symmetric $(n\times n)$-matrix (of constants) with non-zero determinant. 
%The condition that $S_{ij}$ is invertible is to ensure that $(M,g)$ is indecomposable and for most of the following it is not needed.  If $S_{ij}$ is not invertible, then the metric $g_S$ is  a product of Euclidean space $\R^k$ and a Cahen-Wallach space of dimension $n+2-k$. 
% We call the metric $g_S$ a {\em Cahen-Wallach metric}, even when it is only defined on an open subset of $\R^{n+2}$.
%
%We denote by $\Sigma$ the spectrum of the matrix $S$ and by $\Sigma_\pm$ the positive and negative eigenvalues.  A Cahen-Wallach space is of {\em real/imaginary type} if $\Sigma_{\pm}=\Sigma$ and otherwise it is  of {\em mixed type}, see \cite{KathOlbrich15}.
% Two Cahen-Wallach spaces are isometric if and only if the corresponding matrices have the same spectrum with the same multiplicities  up to multiplication by a positive number, so that $\hat\Sigma =a\Sigma$, with $a>0$.}
M.~Cahen and N.~Wallach have shown in \cite{cahen-wallach70,Cahen98} that an indecomposable simply connected Lorentzian symmetric space either has constant curvature or is isometric to  a {\em Cahen-Wallach space}, which is defined as a Lorentzian manifold $(\R^{n+2},g_S)$ with $n\ge 1$ and 
$g_S$ is the metric in (\ref{cwmetric}) defined by a symmetric $(n\times n)$-matrix  $S=(S_{ij})$ with non-zero determinant. 
The condition that $S_{ij}$ is invertible is to ensure that $(M,g)$ is indecomposable.   If $S_{ij}$ is not invertible, then the metric $g_S$ is  a product of Euclidean space $\R^k$ and a Cahen-Wallach space of dimension $n+2-k$. Some of our results remain valid when  $S$ is not invertible, and we will point out when this is the case.  
 Clearly, if $S$ is the zero matrix, $g_S$ is just the Minkowski metric. 
 We call the metric $g_S$ a {\em Cahen-Wallach metric}, even when it is only defined on an open subset of $\R^{n+2}$.

We denote by $\Sigma$ the spectrum of the matrix $S$ and by $\Sigma_\pm$ the positive and negative eigenvalues.  A Cahen-Wallach space is of {\em real type} if $\Sigma=\Sigma_{+}$ and of {\em imaginary type} if $\Sigma=\Sigma_{-}$. Otherwise it is  of {\em mixed type}, see \cite{KathOlbrich15}.
 Two Cahen-Wallach spaces are isometric if and only if the corresponding matrices $S$ and $\hat S$  have the same spectrum with the same multiplicities  up to multiplication by a positive number, so that $\hat\Sigma =a\Sigma$, with $a>0$.

For the metric $g_S$, even when $S$ is degenerate, 
the vector field $\p_v=\frac{\p}{\p v}$, and consequently the one-form $\d t=g(\p_v,.)$, are  parallel and null. Moreover, 
\begin{equation}\label{cwnab}
\nabla \p_i =x^jS_{ij} \,\d t \otimes \p_v,\qquad \nabla \p_t= x^i S_{ij} \left( \d x^j\otimes \p_v - d t \otimes \p_j\right),
\end{equation}
where we use Einstein's summation convention.
By slightly abusing notation, we define the symmetric bilinear form $S=S_{ij}\d x^i\d x^j$
on $M$, so that the curvature of $g_S$ is given as
\begin{equation}\label{cwcurv}
\RR=-S\owedge (\d t)^2,\end{equation}
and consequently $\nabla \RR=0$, 
and the Ricci curvature is
\begin{equation}
\label{cwric}
\Ric=-\tr(S)  (\d t)^2,
\end{equation}
where $\tr(S) $ is the trace of the matrix $S$. Hence,  $g_S$ has vanishing scalar curvature and  Weyl tensor 
\begin{equation}\label{cwweyl}\W= -S\owedge  (\d t)^2 +\tfrac{\tr(S) }{n} g_S\owedge  (\d t)^2 = \left( \tfrac{\tr(S)}{n} \I - S\right) \owedge \d t^2,\end{equation}
where we define $\I=\delta_{ij}\d x^i \d x^j$ and use that \[g_S\owedge (\d t)^2= \I \owedge (\d t)^2.\] This yields the following result:
\begin{proposition}\label{Wflatprop}
The  metric $g_S$  is conformally flat if and only if $S$ is a scalar matrix.
\end{proposition}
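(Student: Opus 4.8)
The plan is to read off conformal flatness directly from the formula (\ref{cwweyl}) for the Weyl tensor, using the dimension hypothesis $\dim(M)=n+2\geq 4$, i.e.\ $n\geq 2$, so that ``conformally flat'' is equivalent to ``Weyl-flat''. (The case $n=1$ is trivial since then $S$ is automatically a $1\times 1$ scalar matrix and the space is $3$-dimensional; one should perhaps remark separately that for $n=1$ the Cotton tensor vanishes, since (\ref{cwcurv}) shows $g_S$ is locally symmetric, hence has parallel and thus coclosed Schouten tensor, making it conformally flat anyway.) So the heart of the matter is the equivalence
\[
\W=0 \quad\Longleftrightarrow\quad \tfrac{\tr(S)}{n}\I - S = 0 \quad\Longleftrightarrow\quad S \text{ is a scalar matrix.}
\]

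First I would establish the backward direction: if $S=\lambda\I$ for some $\lambda\in\R$, then $\tr(S)=n\lambda$, so $\tfrac{\tr(S)}{n}\I - S = \lambda\I-\lambda\I = 0$, and (\ref{cwweyl}) gives $\W=0$ immediately. For the forward direction, suppose $\W=0$. Since the Kulkarni-Nomizu product is $\R$-bilinear and since $A\owedge(\d t)^2=0$ forces $A=0$ for a symmetric bilinear form $A$ on the $x$-variables — this is the one small lemma I need to check — it follows that $\tfrac{\tr(S)}{n}\I - S = 0$, i.e.\ $S=\tfrac{\tr(S)}{n}\I$ is a scalar matrix.

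The only genuinely non-routine point is the claim that $A\owedge(\d t)^2=0$ implies $A=0$ when $A=A_{ij}\,\d x^i\d x^j$. To see this, expand the Kulkarni-Nomizu product on suitable coordinate vectors: evaluating $A\owedge(\d t)^2$ on $(\p_i,\p_t,\p_j,\p_t)$ picks out $A(\p_i,\p_j)(\d t)^2(\p_t,\p_t) = A_{ij}$ (all other terms in the Kulkarni-Nomizu expansion vanish because $(\d t)^2(\p_i,\p_j)=0=(\d t)^2(\p_t,\p_i)$ and $A$ annihilates $\p_v$ and $\p_t$). Hence $A\owedge(\d t)^2=0$ gives $A_{ij}=0$ for all $i,j$. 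Applying this with $A=\tfrac{\tr(S)}{n}\I-S$ finishes the proof.

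I do not anticipate any serious obstacle; the statement is essentially a direct corollary of the curvature computation (\ref{cwweyl}) already carried out in the text. The only things to be careful about are the dimension caveat ($n=1$ versus $n\geq 2$) and making the brief argument that a symmetric bilinear form wedged with the rank-one form $(\d t)^2$ can only vanish if the form itself vanishes. Everything else is substitution and bilinearity.
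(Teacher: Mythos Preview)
Your proposal is correct and follows exactly the approach the paper intends: the proposition is stated immediately after formula~(\ref{cwweyl}) with the words ``This yields the following result'', so the paper treats it as a direct consequence of that formula, and handles the $n=1$ case by the same remark about the Cotton tensor that you give. You have simply supplied the one-line verification that $A\owedge(\d t)^2=0$ forces $A=0$, which the paper leaves implicit.
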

Note that this result includes the case of dimension $3$, i.e.~when $n=1$: since the Ricci tensor is parallel, the Cotton tensor of a Cahen-Wallach metric always vanishes.
It also implies that in each dimension there are exactly two non-isometric Weyl-flat Cahen-Wallach spaces, namely those with $S=\pm\1$, where $\1$ is the identity matrix of $n$ dimensions. We denote their metrics by $g_\pm$. Since $g_\pm$ are  conformally flat, every local conformal transformation is given, via conjugation with the local conformal diffeomorphism to $\R^{1,n+1}$, by a local conformal transformation of Minkowski space, that is,  by the composition of a similarity and a local  inversion of  $\R^{1,n+1}$.

Due to Kuiper's result about the conformal development map \cite{MR31310}, see also \cite{KuhnelRademacher08} for a survey, a conformally flat Cahen-Wallach space $(\R^{n+2},g_\pm)$   embeds into the conformally flat model space, the Einstein universe $\S^1\times \S^{n+1}$ with conformal class defined by the product metric  $-\d \theta^2 + g_{\S^{n+1}}$, which has the conformal group $\mathbf{PO}(2,n+2)$, see \cite{frances08} for a survey. Hence, the Lie algebra of conformal vector fields of a conformally flat Cahen-Wallach space  $(\R^{n+2},g_\pm)$ has maximal dimension, that is  $\tfrac{1}{2}(n+4)(n+3)$. 
An explicit basis for the Lie algebra of conformal vector fields
was given in \cite[Proposition~4.4]{CahenKerbrat78}.

Since our focus is on the conformally curved case, we will not study the conformal group of $(\R^{n+2},g_\pm)$ further, we will only make a few more comments on the difference between the real and the imaginary case.
In terms of global rescaling to a flat metric we have:
\begin{proposition}
Let  $(\R^{n+2},g_\pm)$ be the conformally flat Cahen-Wallach spaces of dimension $n+2$. 
Then any conformal rescaling to a Ricci-flat metric is a rescaling to a flat metric and 
\begin{enumerate}
\item 
the metric $g_0=\e^{2 t } g_+$
on $\R^{n+2}$ is flat;
\item there is no global rescaling $f\in C^\infty(\R^{n+2})$ such that $\e^{2f}g_-$ is flat. 
\end{enumerate}
%
%(There are local rescalings to the flat metric are of the form 
%$\cos(\mu t)g$.)
\end{proposition}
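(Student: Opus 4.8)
The plan is to compute, for a candidate conformal factor $\e^{2f}$, the Ricci tensor of $\e^{2f}g_\pm$ using the transformation law in \eqref{confchange}, impose Ricci-flatness, and analyse the resulting PDE for $f$. Since $g_\pm$ already has vanishing scalar curvature, Lemma~\ref{obslem} shows that Ricci-flatness of $\e^{2f}g_\pm$ is equivalent to $\e^{-2f}\hR = \RR$, i.e.\ to flatness of $\e^{2f}g_\pm$ (one direction being obvious); this takes care of the first assertion, that any rescaling to Ricci-flat is automatically to flat. For part (1), I would simply verify directly that $f = t$ works: with $g_+ = 2\,\d v\,\d t + \x^\top\x\,(\d t)^2 + \d\x^\top\d\x$, the function $f=t$ satisfies $\d f = \d t$, which is null and parallel, so $\nabla f = \p_v$, $\nabla\d f = 0$, and $g(\nabla f,\nabla f)=0$; plugging into the curvature transformation law in \eqref{confchange} gives $\e^{-2f}\hR = \RR - g\owedge 0 = \RR = -\I\owedge(\d t)^2$ for $S=\1$, and then checking that $g_0 = \e^{2t}g_+$ is genuinely flat amounts to the observation that $\hR$, which equals $\e^{2t}\RR$, must be compared against the curvature of $g_0$ — actually the cleanest route is to exhibit an explicit flat coordinate system for $g_0$, or to note that $g_0$ is a Cahen-Wallach-type metric whose curvature vanishes identically by the computation just performed together with $\hscal = 0$ and $\hric = 0$.

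For part (2), the obstruction argument: suppose $\e^{2f}g_-$ is flat for some global $f\in C^\infty(\R^{n+2})$. Then in particular $\e^{2f}g_-$ is Ricci-flat with vanishing scalar curvature, so the scalar-curvature equation in \eqref{confchange} forces $2\Delta f = (n)\,g_-(\nabla f,\nabla f)$ (taking $m = n+2$), and the Ricci-flatness equation becomes a constraint relating $\nabla\d f$, $(\d f)^2$, and $g_-(\nabla f,\nabla f)$ to $-\Ric = \tr(S)(\d t)^2 = -n(\d t)^2$. The key point is that for $g_-$ (with $S = -\1$) the solutions $\beta$ to $\beta'' = S\beta$ are bounded — trigonometric — reflecting the "imaginary type" condition, whereas for $g_+$ they are unbounded exponentials; this is exactly the dichotomy flagged after \eqref{homoth}. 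Concretely, I expect that any putative $f$ must, by an argument along the geodesic flow of $\p_v$ or along the $t$-lines, behave like the "$t$" in part (1) up to isometry, i.e.\ grow linearly; but the sign of $S$ reverses the relevant inequality and shows no smooth global $f$ can satisfy the equations — for instance, restricting the Ricci-flatness equation to the totally geodesic Minkowski $\R^{1,1}$ given by $\x = 0$, or integrating the scalar equation against the volume form over a suitable compact region after passing to a quotient, yields a contradiction with the definiteness/sign of $S = -\1$.

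The main obstacle is part (2): unlike part (1), there is no slick closed-form witness, and one must genuinely rule out \emph{all} smooth $f$. I would isolate the essential difference from the $g_+$ case as follows. The $g_+$ rescaling $f = t$ works because $\d t$ is null and parallel and $-S = -\1$ matches the sign produced by the $g\owedge(\ldots)$ correction term; for $g_-$ the matrix $-S = +\1$ has the wrong sign, so one would need a function $f$ whose Hessian $\nabla\d f$ supplies a term $+\I\owedge(\d t)^2$, i.e.\ morally $f$ quadratic in the "wrong" way, forcing $\nabla\d f$ to have a component like $+\x^\top\x$-type growth that is incompatible with $\e^{2f}g_-$ being a complete smooth metric / with the boundedness of the oscillatory solutions $\beta$. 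Making "incompatible" precise — most likely by showing the overdetermined system \eqref{confchange} for $\hric = 0$, $\hscal=0$ has no global solution when $S<0$, via an ODE analysis along the null geodesics generating $\ker\d t$ together with a maximum-principle or periodicity obstruction — is the technical heart of the argument. I would expect the paper to handle this by explicitly solving the PDE for $f$ locally and exhibiting that the only local solutions fail to extend globally, or equivalently by showing the developing map places $g_-$ inside the Einstein universe in a way whose image is not contained in any flat affine chart.
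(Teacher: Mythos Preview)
Your handling of the first assertion and of part~(1) is essentially the paper's argument, though note a small slip: Lemma~\ref{obslem} does not give $\e^{-2f}\hR=\RR$ when $\hric=0$; it gives $\e^{-2f}\hR=\RR-\tfrac{1}{n}\,g_S\owedge\Ric$, and the point is that for $S=\epsilon\1$ one has $\RR=-\epsilon\,\I\owedge(\d t)^2$ and $\Ric=-n\epsilon\,(\d t)^2$, so this expression vanishes identically --- that computation is what forces flatness, not just $\e^{-2f}\hR=\RR$. For part~(1), verifying $f=t$ against the Ricci equation is exactly what the paper does.

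For part~(2) you circle the right idea but do not land on it. The discussion of bounded versus unbounded $\beta\in V_S$ is a red herring here: that dichotomy governs the structure of the \emph{isometry group}, not the conformal-factor PDE. The paper's argument is precisely your throwaway suggestion ``restrict to $\x=0$'', carried out concretely. Evaluate the $(\p_t,\p_t)$-component of the Ricci-flatness equation
\[
0=\epsilon\,(\d t)^2+\nabla\d f-(\d f)^2+\tfrac12\, g(\nabla f,\nabla f)\,g
\]
along the line $(t,0,\ldots,0)$. With $\epsilon=-1$ the $x^i\p_i f$ and $\|\x\|^2$ terms drop out and one obtains the scalar ODE $\ddot h=\dot h^2+1$ for $h(t)=f(t,0,\ldots,0)$. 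Setting $y=\dot h$ gives $\dot y=y^2+1$, whose every solution blows up in finite time (the antiderivative $\arctan$ is bounded), contradicting $f\in C^\infty(\R^{n+2})$. That single ODE blow-up is the entire obstruction; your alternative proposals --- maximum principles, periodicity, integrating over a compact quotient, developing-map arguments --- are unnecessary detours, and it is not clear any of them would close without substantially more work.
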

\begin{proof}
With $S=\epsilon\I$, with $\epsilon=\pm 1$,  the curvature of $g_\epsilon$ is 
\[\RR=-\epsilon \,\I \owedge (\d t)^2 = -\epsilon \, g_S\owedge  (\d t)^2.\]
Now assume that $f$ is a rescaling to a Ricci-flat metric $\hg=\e^{2f}g_\epsilon$.
Both metrics have vanishing scalar curvature, so  Lemma~\ref{obslem} and equations (\ref{cwcurv}) and (\ref{cwric}) yield
\[
\e^{-2f}\hR =\RR-\frac{1}{n}g_S\owedge \Ric=
\left( -\epsilon +\tfrac{\tr(S)}{n}\right) g_S\owedge (d t)^2 =0,
\]
as $\tr(S)=n\epsilon$. Hence, $\hg$ is not only Ricci-flat, but also flat.

By equations (\ref{confchange}), a conformal rescaling $f$ to a Ricci-flat metric
satisfies 
$2\Delta f= ng(\nabla f,\nabla f)$ and hence 
is a solution to
\begin{equation}
\label{ric0eq}
0= \epsilon  (\d t)^2+  \left( \nabla \d f - (\d f)^2 \right) +\tfrac{1}{2} g(\nabla f,\nabla f)g.\end{equation}
If $\epsilon=1$, then a solution to this equation is $f= t$. Indeed, $\d f =  \d t$, and hence $\nabla \d f=0$ and $g(\nabla f,\nabla f)=0$.

Now assume that  $\epsilon=-1$ and that $f\in C^\infty(M)$ is a global solution to equation (\ref{ric0eq}). We consider  the function $h(t)=f(t,0,\ldots, 0)$ which is defined on $\R$.
Evaluating the equation in the $\p_t$ direction and only along $(t,0,\ldots, 0)$ yields 
\[
0=
-1+{\p^2_t}f -x^i\p_if  -(\p_tf)^2 -\tfrac{\|\x\|^2}{2}g(\nabla f,\nabla f)
=
 -1+{\p^2_tf}  -(\p_tf)^2,
\]
 which shows that $h$ satisfies the ODE
\[\ddot{h}=\dot{h}^2+1.\]
%with initial conditions $h(0)=a=f(0,\ldots, 0)$ and $\dot h(0)=b=\frac{\p f}{\p t}(0,\ldots, 0)$.
Its derivative $y=\dot h\in C^\infty(\R)$ satisfies the first order separable equation 
\[\dot y=y^2+1.\]
Since $\int\frac{1}{y^2+1}\d y =\arctan(x)$ is bounded, the maximal domain of the solutions $y$ is also bounded  (see for example \cite[p.~9]{teschl12}), which contradicts the assumption.
\end{proof}

In the real type case this proposition shows that $g_+$ is  {\em globally} conformally equivalent to a flat Lorentzian metric $\hg$ on $\R^{n+2}$. We will see that this flat metric is in fact a {\em geodesically  incomplete}  Lorentzian metric. 
For this,  use (\ref{confchange}) and (\ref{cwnab}) to show that the vector fields  
\[\p_v,\quad Y_i:= \e^{-t}\left( \p_i+x^i\p_v\right),\quad Z:=\e^{-2t}\p_t -\e^{-t}x^kY_k=\e^{-2t}\left( \p_t-x^k\p_k- \|\x\|^2\p_v\right)
\]
on $\R^{n+2}$ are parallel for $\widehat{\nabla}$ and satisfy
\[
\hg (\p_v,\p_v)=\hg (\p_v,Y_i)=\hg (Y_i,Z)= \hg (Z,Z)=0,\quad \hg(\p_v,Z)=1,\quad \hg(Y_i,Y_j)=\delta_{ij}.\]
Observe now that the vector field  $Z$ is not complete. For example, its maximal integral curve through the origin is given as
\[\gamma(s) = \begin{pmatrix} t(s)=\frac12 \ln(2s+1) \\0\\0\end{pmatrix}.\]
Since $Z$ is parallel, this is also a maximal geodesic for $\hg$, which consequently is a geodesically  {\em  incomplete} flat Lorentzian metric on $\R^{n+2}$.

We will find an explicit conformal transformation between $(\R^{n+2},g_+)$ and an open set in Minkowski space $\R^{1,n+1}$.  Since $\p_v$, $Y_i$ and $Z$ are parallel, their metric duals are closed $1$-forms and we can  find a diffeomorphism 
\[\phi=\begin{pmatrix}
 u\\ y^i\\ z\end{pmatrix}\]
of  $\R^{n+2}$ by integrating the equations
\[
\begin{array}{rcccl}
\d u&=&\hg(\p_v,.,)& =& \e^{2t} \d t,
\\
\d y^i&=&\hg (Y_i,.) & =& \e^t\d x^i+\e^t x^i\d t,
\\
\d z &=& \hg(Z,.)& = &\d v- x^k \d x^k.
\end{array}
\]
A solution that yields a diffeomeorphism $\phi: \R^{n+2}\to \{u>0\}\subset \R^{1,n+1}$ is 
given by
\[
u=
\frac{\e^{2t}}{2},\quad 
y^i=
\e^t x^i
,\quad 
z=v-\tfrac{\|\x\|^2}{2}.
\]
 Hence we arrive at:
 \begin{proposition}\label{confmink}
 Let $(\R^{n+2},g_+)$ be the Weyl-flat Cahen-Wallach space of real type and let $(M,g_0)$ be the  Minkowski half space,
 \[M=\{(u,y^1,\ldots , y^n,z) \in \R^{n+2}\mid u>0\},\quad g_0=2\d u \d z +\delta_{ij} \d y^i\d y^j. \]
 Then $\phi$ defined by
\[
\R^{n+2}\ni \begin{pmatrix}
 t\\ \x\\ v\end{pmatrix}
\stackrel{\phi}{\longmapsto} 
\begin{pmatrix}u=
\frac{\e^{2t}}{2}\\[2mm]
\y=\e^t \x\\[2mm]
z=v-\frac{\|\x\|^2}{2}
\end{pmatrix}\in M
 \]
is a global conformal diffeomorphism between $(\R^{n+2},g_+)$ and $(M,g_0)$ with $
\phi^*g_0=\e^{2t}g_+$.
 \end{proposition}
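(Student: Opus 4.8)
The plan is to verify directly that $\phi$ is a diffeomorphism onto $M$ and that it pulls $g_0$ back to $\e^{2t}g_+$; the proposition is essentially a bookkeeping consequence of the frame computation carried out just before its statement.

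First I would check that $\phi$ is a diffeomorphism from $\R^{n+2}$ onto $M$. The first component $u=\e^{2t}/2$ is always positive, so $\phi$ maps $\R^{n+2}$ into $M=\{u>0\}$, and it is visibly smooth. An explicit smooth inverse on $\{u>0\}$ is given by $t=\tfrac12\ln(2u)$, $x^i=y^i/\sqrt{2u}$ and $v=z+\|\y\|^2/(4u)$; hence $\phi$ is a bijection with smooth inverse, i.e.\ a diffeomorphism, and in particular its image is exactly the half-space $\{u>0\}$.

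Next I would compute the pullbacks of the coordinate one-forms of $M$. Differentiating the defining formulas for $u$, $y^i$, $z$ gives $\phi^*\d u=\e^{2t}\,\d t$, $\phi^*\d y^i=\e^{t}\,\d x^i+\e^{t}x^i\,\d t$ and $\phi^*\d z=\d v-x^k\,\d x^k$; these are precisely the one-forms $\hg(\p_v,\cdot)$, $\hg(Y_i,\cdot)$, $\hg(Z,\cdot)$ used to construct $\phi$. Substituting into $g_0=2\,\d u\,\d z+\delta_{ij}\,\d y^i\,\d y^j$ and expanding, the cross term $+2\e^{2t}x^k\,\d x^k\,\d t$ coming from $\sum_i(\phi^*\d y^i)^2$ and the cross term $-2\e^{2t}x^k\,\d t\,\d x^k$ coming from $2\,\phi^*\d u\,\phi^*\d z$ cancel, and one is left with $\phi^*g_0=\e^{2t}\bigl(2\,\d v\,\d t+\|\x\|^2(\d t)^2+\d\x^\top\d\x\bigr)$, which equals $\e^{2t}g_+$ since here $S=\1$ and hence $\x^\top S\x=\|\x\|^2$. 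Therefore $\phi$ is a conformal diffeomorphism with $\phi^*g_0=\e^{2t}g_+$, as claimed.

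There is no genuine obstacle here: the statement is a direct verification, whose substance (the parallel frame $\p_v,Y_i,Z$ for $\hg$ and the primitives $u,y^i,z$ of its dual one-forms) has already been produced above. The only points needing a moment's care are confirming that $\phi$ is injective with image exactly $\{u>0\}$, both immediate from the explicit inverse, and keeping track of the signs in the expansion so that the $\d t\,\d x^k$ cross terms really do cancel.
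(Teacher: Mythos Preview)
Your proposal is correct and follows essentially the same approach as the paper: the paper constructs $\phi$ by finding primitives $u,y^i,z$ for the closed one-forms $\hg(\p_v,\cdot)$, $\hg(Y_i,\cdot)$, $\hg(Z,\cdot)$ of the flat metric $\hg=\e^{2t}g_+$, and your direct verification of $\phi^*g_0=\e^{2t}g_+$ amounts to reversing this construction. The explicit inverse you give is exactly the one recorded in the paper's subsequent remark.
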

 
 \begin{remark}
 Note that the inverse of $\phi$ is
 \[
\begin{pmatrix}
u\\
\y
\\
z
\end{pmatrix}\stackrel{\phi^{-1}}{\longmapsto} \begin{pmatrix}
t=\frac{1}{2} \ln (2u)
\\[1mm]
\x=\frac{\y}{\sqrt{2u}}
\\[2mm]
v=z+\frac{\|\veccy{y}\|^2}{4u}
\end{pmatrix},
\]
so that
$(\phi^{-1})^*g_+=\tfrac{1}{2u}g_0$.
Under conjugation by $\phi$, the isometries of  $(\R^{n+2},g_+)$ that are given by a translation in the $t$-component by $c$ (see next section),
\[
		\begin{pmatrix}t\\\veccy{x}\\v\end{pmatrix} \mapsto \begin{pmatrix} t+c \\\veccy{x} \\ v\end{pmatrix},
\]
are mapped to strict homotheties of $(M,g_0)$  of the form 
\[
		\begin{pmatrix}u\\\veccy{y}\\ z\end{pmatrix} \longmapsto \begin{pmatrix} \e^{2c} u \\ \e^c \veccy{y}\\ z\end{pmatrix},
\]
whereas the isometry of $g_+$, 
\[
		\begin{pmatrix}t\\\veccy{x}\\v\end{pmatrix} \longmapsto \begin{pmatrix} -t \\\veccy{x} \\ -v\end{pmatrix},
\]
is mapped to the non-homothetic conformal transformation  of $g_0$ given by
\[
		\begin{pmatrix}u\\\veccy{y}\\ z\end{pmatrix} \stackrel{\eta}{\longmapsto} \begin{pmatrix} \tfrac{1}{4 u} \\[2mm] \tfrac{ \veccy{y}}{2u} \\- z- \tfrac{\|\y\|^2}{2u}\end{pmatrix},
\]
that satisfies $\eta^*g_0=\frac{1}{4u^2}g_0$.
\end{remark}

In the imaginary case, only {\em local}  rescalings to a flat metric $\hg$ exist, for example, 
\[ \hg=\tfrac{1}{\cos^{2} (t) } \, g_-.\]
Similarly to the real case one, can show that the parallel vector fields of $\hg$ on $\{t\not=\frac{(2k+1) \pi}{2}\}$ are 
\[\partial_v, \quad Y_i=\cos (t)\, \partial_i +x^i\sin (t)\,\partial_v,\]
and \[ 
Z\ =\ \cos^2(t)\partial_t-x^i\sin(t) Y_i+\tfrac{\|x\|^2}{2} \partial_v\ =\ 
\cos^2(t)\partial_t-\tfrac{x^i}{2}\sin(2t) \p_i+\tfrac{\|x\|^2}{2} \cos (2t) \partial_v.
\]
Note that now the integral curves of $Z$ through the origin are given by
\[\gamma(s)=\begin{pmatrix}
\arctan(s)
\\
0
\\
0
\end{pmatrix}
\]
and hence defined for all $s$.
As before, for finding a diffeomeorphism $\phi=(u,y^i,z)$  we can integrate the equations
\[
\begin{array}{rcccl}
\d u&=&\hg(\p_v,.,)& =&\tfrac{1}{\cos^2(t)}  \d t,
\\
\d y^i&=&\hg (Y_i,.) & =& \tfrac{1}{\cos(t)}  \d x^i+\tfrac{\tan(t)}{\cos(t)}  x^i\d t,
\\
\d z &=& \hg(Z,.)& = &\d v- x^k \tan (t) \d x^k - \tfrac{\|x\|^2}{2\cos^2(t)} \d t
\end{array}
\]
and get a diffeomeorphism $\phi: \{ -\tfrac{\pi}{2}<t<\frac{\pi}{2} \}\to \R^{1,n+1}$ is 
given by
\[
u=
\tan (t) ,\quad 
y^i=
\tfrac{ x^i}{\cos (t) }
,\quad 
z=v-\tfrac{\|x\|^2}{2}\tan(t).
\]
%Note that the inverse of $\phi $ is
%\[
%\begin{pmatrix}
%u\\
%y^i
%\\
%z
%\end{pmatrix}\stackrel{\phi^{-1}}{\longmapsto} \begin{pmatrix}
%t=\arctan(u)
%\\[1mm]
%x^i=\frac{y^i}{\sqrt{1+u^2}}
%\\[2mm]
%v=z+\frac{\|\veccy{y}\|^2}{2} \frac{u}{1+u^2}
%\end{pmatrix}
%\]
Indeed, for the Minkowski metric $g_0=2\d u\d z +\sum_{i=1}^n(\d y^i)^2$ on $\R^{1,n+1}$, we have that 
\[ \phi^*g_0= \tfrac{1}{\cos^2(t) }g_-.\]

\subsection{Isometries, homotheties and conformal transformations}
\label{confcw-sec}
In this section we are going to determine the homothety group of a  Cahen-Wallach space, and consequently by virtue of Corollary~\ref{cor_locally-symmetric-implies-Cahen-Kerbrat-result}, in the conformally curved case also its conformal group. First we describe its isometry group, which is well-known since
 \cite{cahen-wallach70}, see also \cite{KathOlbrich15}.
 
 Let $(\R^{n+2}, g_S)$ in be a Cahen-Wallach space defined by the matrix $S$, which, for the moment, is not  assumed to be invertible.
 We denote
by   $\cent_{\O(n)}(S)$ the orthogonal matrices commuting with $S$ and by $V_S$ the $2n$-dimensional solution space of the ODE system $\ddot\beta=S\beta$, 
 \[V_S:=\{\beta:\R\to \R^n\mid \ddot\beta=S\beta\}.\]
It is straightforward to check that  the following  diffeomorphisms are  isometries of $g_S$,
			\begin{equation}\label{isom}
				%(\beta,b,a,c,A):=
\psi=\psi_{c,\epsilon,b,\beta,A}:\begin{pmatrix}t\\\veccy{x}\\v\end{pmatrix} {\longmapsto} \begin{pmatrix}\epsilon\, t+c \\ A\veccy{x}+\beta(t) \\ \epsilon \left(v+b-\< \dot\beta(t),A\boldsymbol{x}+\frac12\beta(t)\>\right)\end{pmatrix},
				%\psi\begin{pmatrix}t\\\veccy{x}\\v\end{pmatrix} = \psi_{\beta,b,a,c,A}\begin{pmatrix}t\\\veccy{x}\\v\end{pmatrix} := \begin{pmatrix}at+c \\ A\veccy{x}+\beta(t) \\ a\left(v+b- \dot\beta(t)^\transpose(A\boldsymbol{x}+\frac12\beta(t))\right)\end{pmatrix}	%This version uses transpose instead of \<\>
			\end{equation}
			where $c\in \R$, $\epsilon\in\{\pm 1\}$, $A\in C_{\O(n)}(S)$,  $b\in \R$, $\beta\in V_S$ and $\<.,.\>$ is the standard Euclidean inner product on $\R^n$. Moreover, when $S$ is invertible,  using the fact that an isometry preserves the parallel null vector field $\p_v$,
						one can show that every isometry of $g_S$ is of this form (see \cite[Section 4.2]{stuart-thesis} for an explicit calculation).
			
In order to describe the group structure of the isometry group we will identify several subgroups of the isometry group and their relation to each other. All of these groups come with their natural action on $\R^{n+2}$ via formula (\ref{isom}). 	
		
First note that $ \R\times \{\pm1\}$ is a subgroup of the isometries with the  group structure of the   Euclidean group of $\R$, $\E(1)=\R\rtimes \Z_2$. We denote its elements by either $(c,\epsilon)$ or by $E_{c,\epsilon}$ when we refer to the  Euclidean motion $E_{c,\epsilon}(t)=\epsilon\,t+c$ of $\R$.			
Next, note that $\E(1)$ and $ \cent_{\O(n)}(S)$ commute with each other and that $\E(1)\times\cent_{\O(n)}(S)$ forms a subgroup of $\Iso(\R^{n+2},g_S)$, and we denote its elements by pairs $(E_{c,\epsilon}, A)$. 

Furthermore, also $\R\times V_S$, with its elements denoted by $(b,\beta)$,  forms a subgroup with group operation 
\[(b,\beta) +_\omega (\hat b, \hat \beta):= (b+\hat b +\omega (\beta ,\hat\beta) ,\beta+\hat\beta),\]
where $\omega$ is the symplectic form on $V_S$, defined by 
\[ \omega(\beta,\hat\beta) :=\tfrac{1}{2}\left(  \<\beta(0),\dot{\hat\beta}(0) \>- \<\dot\beta(0),\hat\beta(0)\>\right).\]
Note that the function $t\mapsto  \<\beta(t),\dot{\hat\beta}(t) \>- \<\dot\beta(t),\hat\beta(t)\>$ is actually constant, so in order to define $\omega$ we could have evaluated it at any other $t\not=0$. In particular, $\omega$ satisfies 
\begin{equation}
\label{shiftomega}
\omega(\beta,\hat\beta\circ E_{c,\epsilon} )=\epsilon\omega (\beta\circ E_{c,\epsilon}^{-1},\hat\beta),\quad\text{ for all $E_{c,\epsilon}\in \E(1)$,}
\end{equation}
which will turn out to be useful, as well as  \begin{equation}\label{Aomega}
\omega (A \beta, A\hat \beta)= \omega(\beta,\hat\beta),\quad\text{ for all $A\in \O(n)$.}\end{equation} This shows that $\R\times V_S$ has the group structure of the $(2n+1)$-dimensional Heisenberg group
\[\Hei_n:=\R\times_\omega V_S,\] 
which is the central extension of $V_S$ by $\R$. 

The subgroup $\Hei_n$ is normal in $\Iso(\R^{n+2},g_S)$. In fact, if $E_{c,\epsilon}\in \E(1)$ and $A\in \cent_{\O(n)}(S)$, we have for $(b,\beta)\in \Hei_n$ that
\[ (E_{c,\epsilon},A)(b,\beta)(E_{c,\epsilon},A)^{-1}=(E_{c,\epsilon},A)(b,\beta)(E_{\epsilon,-\epsilon c},A^\top)
=
\left( \epsilon b, A \beta\circ E_{-\epsilon c,\epsilon}\right)\in \Hei_n.\] 
Finally, any isometry $\psi$ as in (\ref{isom}) is a product of elements from $\Hei_n$ and $\E(1)\times \cent_{\O(n)}(S)$. Indeed, it is
\begin{equation}\label{actions}\psi =\psi_{c,\epsilon,b,\beta,A}= 
\underbrace{E_{\epsilon, c}}_{\in \E(1)}  \underbrace{(b,\beta)}_{\in \Hei_n}  \underbrace{A}_{\in \cent_{\O(n)}(S)}
=
\underbrace{\left( \epsilon b,  \beta\circ E_{-\epsilon c,\epsilon}\right)}_{\in \Hei_n}  \underbrace{(E_{c,\epsilon},A)}_{\in \E(1)\times\cent_{\O(n)}(S)}.
\end{equation}

The reader may have noticed that, in order to keep the notation brief, we use it quite flexibly: for example by $A$ we refer to $\psi_{0,1,0,0,A}$, 
 by $(E_{c,\epsilon},A)$ to $\psi_{c,\epsilon,0,0,A}$, by $(b,\beta)$ to $\Psi_{0,1,b,\beta,\1}$, etc., and the group product the is the composition when acting on $\R^{n+2}$.
Hence, we have arrived at the well known fact \cite{cahen-wallach70,KathOlbrich15}:
\begin{proposition}\label{isoprop}
The isometry group of a Cahen-Wallach space $(\R^{n+2},g_S)$ is isomorphic to the semidirect product
\[
\Hei_n\rtimes_\alpha \left(\E(1)\times C_{\O(n)}(S)\right),\]
where 
 $\Hei_n$ is the $(2n+1)$-dimensional Heisenberg group, $\E(1)=\R\rtimes \Z_2$ is the Euclidean group in one dimension, $\cent_{\O(n)}(S)$ is the centraliser of the matrix $S$ in $\O(n)$, 
and the homomorphism 
$\alpha:\E(1)\times C_{\O(n)}(S)\to\Aut(\Hei_n)$ is defined as 
\[\alpha_{(c,\epsilon,A)} (b,\beta) :=\left( \epsilon b, A \beta\circ E_{c,\epsilon}^{-1}\right).\]
The isomorphism maps $\psi_{c,\epsilon,b,\beta,A}$ in (\ref{isom}) to 
\[
\left( (\epsilon b,  \beta \circ E_{-\epsilon c,\epsilon} ) ,E_{c,\epsilon},A\right)\in \Hei_n\rtimes(\E(1)\times \cent_{\O(n)}(S)).
\]
\end{proposition}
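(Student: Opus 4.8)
The plan is to verify that the map
\[
\Phi\colon \psi_{c,\epsilon,b,\beta,A}\longmapsto \bigl((\epsilon b,\ \beta\circ E_{-\epsilon c,\epsilon}),\ E_{c,\epsilon},\ A\bigr)
\]
is a well-defined group isomorphism from $\Iso(\R^{n+2},g_S)$ onto $\Hei_n\rtimes_\alpha(\E(1)\times\cent_{\O(n)}(S))$. Most of the work has already been done in the discussion preceding the statement: equation~\eqref{isom} shows every map of that form is an isometry, and when $S$ is invertible the cited computation in \cite[Section 4.2]{stuart-thesis} shows conversely that every isometry has this form, so $\Phi$ is a well-defined bijection onto the stated set (one should remark that the parameters $(c,\epsilon,b,\beta,A)$ determine $\psi$ uniquely, which is clear by reading off the $t$-, $\x$- and $v$-components in turn). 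It therefore remains to check that $\Phi$ is a homomorphism, equivalently that the semidirect-product structure on the target, with the twisting homomorphism $\alpha_{(c,\epsilon,A)}(b,\beta)=(\epsilon b,\ A\beta\circ E_{c,\epsilon}^{-1})$, reproduces composition of isometries.

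First I would record the elementary facts that $\E(1)$ and $\cent_{\O(n)}(S)$ genuinely commute inside the diffeomorphism group (this is immediate from~\eqref{isom} since $A$ acts only on $\x$ and $E_{c,\epsilon}$ only on $t$, while the $v$-component picks up no cross term when $\beta=0$), that $\E(1)$ preserves $V_S$ by precomposition because $\ddot\beta=S\beta$ is translation- and reflection-invariant, and that $A\in\cent_{\O(n)}(S)$ preserves $V_S$ by postcomposition because it commutes with $S$; together with~\eqref{Aomega} and~\eqref{shiftomega} this shows $\alpha_{(c,\epsilon,A)}$ really is an automorphism of $\Hei_n=\R\times_\omega V_S$ and that $\alpha$ is a homomorphism $\E(1)\times\cent_{\O(n)}(S)\to\Aut(\Hei_n)$. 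Next I would verify the two key relations already displayed in the text: the conjugation formula
\[
(E_{c,\epsilon},A)\,(b,\beta)\,(E_{c,\epsilon},A)^{-1}=\bigl(\epsilon b,\ A\beta\circ E_{-\epsilon c,\epsilon}\bigr),
\]
which identifies the action of $\E(1)\times\cent_{\O(n)}(S)$ on the normal subgroup $\Hei_n$ with $\alpha$ (after rewriting $E_{-\epsilon c,\epsilon}=E_{c,\epsilon}^{-1}$), and the factorisation~\eqref{actions} expressing an arbitrary $\psi_{c,\epsilon,b,\beta,A}$ as a product of an element of $\Hei_n$ and an element of $\E(1)\times\cent_{\O(n)}(S)$. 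Given these, the homomorphism property of $\Phi$ is a formal consequence of the universal property of the semidirect product: composing $\psi_{c_1,\epsilon_1,b_1,\beta_1,A_1}$ with $\psi_{c_2,\epsilon_2,b_2,\beta_2,A_2}$, inserting the factorisation, moving the right-hand $\E(1)\times\cent_{\O(n)}(S)$-factor past the left-hand $\Hei_n$-factor using the conjugation formula, and collecting terms yields exactly the product rule of $\Hei_n\rtimes_\alpha(\E(1)\times\cent_{\O(n)}(S))$.

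The one genuinely computational point — and the place where sign and composition-order bookkeeping is most likely to bite — is checking the group law on $\Hei_n=\R\times_\omega V_S$ itself, i.e.\ that composing two isometries of the pure form $(b,\beta)=\psi_{0,1,b,\beta,\1}$ produces $(b+\hat b+\omega(\beta,\hat\beta),\ \beta+\hat\beta)$ with $\omega$ as defined via $\omega(\beta,\hat\beta)=\tfrac12(\langle\beta(0),\dot{\hat\beta}(0)\rangle-\langle\dot\beta(0),\hat\beta(0)\rangle)$; this uses the constancy of $t\mapsto\langle\beta,\dot{\hat\beta}\rangle-\langle\dot\beta,\hat\beta\rangle$ (a Wronskian-type identity following from $\ddot\beta=S\beta$ and symmetry of $S$) to justify evaluating $\omega$ at $t=0$, and a short expansion of the $v$-component of~\eqref{isom} under composition. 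Once that computation is in hand, everything else is assembly, and I would close by noting that $\Phi$ restricted to the kernel of the $\R$-valued conformal-factor character is what it needs to be, and — since the homothety group is $\Iso\rtimes\R$ and, in the conformally curved case, equals the conformal group by Corollary~\ref{cor_locally-symmetric-implies-Cahen-Kerbrat-result} — this proposition is exactly the input needed for the descriptions of $\Homoth$ and $\Conf$ referenced as Corollary~\ref{curvedconf}.
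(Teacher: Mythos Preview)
Your proposal is correct and follows essentially the same approach as the paper: the proposition there is not given a separate proof but is stated as the conclusion of the preceding discussion, which establishes exactly the ingredients you list --- the conjugation formula showing $\Hei_n$ is normal with the $\alpha$-action, and the factorisation~\eqref{actions}. You are simply more explicit about assembling these into a formal verification that $\Phi$ is a homomorphism, and about the Wronskian computation underlying the $\Hei_n$ group law, but the logical structure is identical.
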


To be very explicit, let us emphasise again that the  action of an element $\left( (b,\beta), (E_{c,\epsilon},A) \right) 
$ of $\Hei_n\rtimes_\alpha \left(\E(1)\times C_{\O(n)}(S)\right)$ on $\R^{n+2}$ is given via (\ref{isom})  as 
\[\left( (b,\beta), (E_{c,\epsilon},A) \right) (t,\x,v)= \psi_{0,1,b,\beta,\1}\left( \psi_{c,\epsilon,0,0,A} (t,\x,v)\right).\]
Moreover, the 
  explicit formula for the group product in $\Hei_n\rtimes_\alpha \left(\E(1)\times C_{\O(n)}(S)\right)$ is
\begin{eqnarray}
\nonumber
\lefteqn{
\Big((b,\beta), (E_{c,\epsilon} ,A)\Big) \left((\hat b,\hat\beta), (E_{\hat c,\hat\epsilon} ,\hat A)\right)}\\
&
=&
\nonumber
\left( \left( (b,\beta)+_\omega\alpha_{c,\epsilon,A}  (\hat b,\hat \beta)\right),
 \left( E_{c,\epsilon}\circ  E_{\hat c,\hat \epsilon}, A\hat A\right)\right)
 \\
 &=&
 \label{product}
 \left( \left( (b+\epsilon \hat b + 
 \omega(\beta, A\hat \beta\circ E_{-\epsilon c,\epsilon}) , \beta+ A\hat \beta\circ E_{-\epsilon c,\epsilon}\right) ,( E_{c+
 \epsilon \hat c, \epsilon\hat \epsilon}, A\hat A)\right),
 \end{eqnarray}
 where $(b,\beta)$ and $(\hat b,\hat\beta)$ are elements from $\Hei_n$ and $(E_{c,\epsilon} ,A)$ and $(E_{\hat c,\hat\epsilon} ,\hat A)$ from $\E(1)\times C_{\O(n)}(S)$. 
The formula for the inverse is
  \begin{equation}\label{inverse}
 \Big((b,\beta), (E_{c,\epsilon} ,A)\Big)^{-1}= \Big((-\epsilon b,-A^\top\beta\circ E_{c,\epsilon}), (E_{-\epsilon c,\epsilon} ,A^\top)\Big).\end{equation}

\begin{remark}
We should also point out that if $S$ is not invertible, then the isometry group of $g_S$ contains the group $\Hei_n\rtimes_\alpha \left(\E(1)\times \cent_{\O(n)}(S)\right)$, but in general is larger, for example when $S=0$, in which case $g_S$ is the Minkowski metric. 

When $S$ is not zero but has a  kernel of dimension $k\ge 1$, the metric $g_S$ is isometric to a product of an indecomposable Cahen-Wallach space of dimension $n-k+2$ and Euclidean space of dimension $k$. Interestingly, since it contains $\Hei_n\rtimes_\alpha \left(\E(1)\times \cent_{\O(n)}(S)\right)$, the isometry group is larger than the product of the isometry groups of both manifolds,
whose dimension is  
\[\dim(\cent_{\O(n-k)}(S)) + 2(n+1) +  \frac{1}{2} k(k-3).\] 
On the other hand, since the centraliser of $S$ in $\O(n)$ is $\cent_{\O(n-k)}(S)\times \O(k)$, the dimension of $\Hei_n\rtimes_\alpha \left(\E(1)\times \cent_{\O(n)}(S)\right)$
is \[\dim(\cent_{\O(n-k)}(S)) + 2(n+1) +  \frac{1}{2} k(k-1).\] 
Moreover, observe that if
 $(\R^{n+2},g_S)$ is conformally flat, then  \[\Iso(\R^{n+2},g_S)=
\Hei_n\rtimes_\alpha \left(\E(1)\times \O(n)\right),\]
and hence 
 the dimension of the isometry group is reduced by $n+1$ from the  dimension of the isometry group of Minkowski space $\R^{1,n+1}$, which is $\tfrac{1}{2}(n+2)(n+3)$. 
 \end{remark}
\begin{remark}
The transvection group within the isometry group is the solvable group
$\Hei\ltimes_\alpha \R$, where $\R$ are the translations in $\E(1)$.
%where the multiplication simplifies to 
%\[
%\left((b,\beta), c \right)\cdot \left((\hat b,\hat\beta),\hat c\right)
%=
%\left( \left(  (b, \beta))+_\omega (b,\sigma_{- c}(\hat \beta)\right),
% c+\hat c\right),
%\]
%with defined by $\sigma:\R\to \Aut(\Hei_n)$ given by the shift 
%\[\sigma_c(\beta)(t)=\beta(t+c).\]
The stabiliser in $\Iso(\R^{n+2},g_S)$ of the origin is given as
\[L_S\rtimes_\alpha (\Z_2\times \cent_{\O(n)}(S)),\]
where 
\[L_S:=\{\beta\in V_S\mid \beta(0)=0\}\subset \Hei_n\]
is a Lagrangian subspace  in $V_S$ and hence a subgroup of $\Hei_n$. 
Similarly the stabiliser in the transvections is the abelian group  $L_S$ and we have
\[ (\R^{n+2},g_S)= (\Hei\ltimes_\alpha \R)/L_S.\]
\end{remark}
Now we turn to the homotheties of $(\R^{n+2}, g_S)$.  Clearly, for each $s\in \R$  the linear map given by the matrix $h_s:=\diag (1,\e^s,\ldots , \e^s, \e^{2s} )$,
\begin{equation}
\label{hs}
\begin{pmatrix}
t\\ \x \\ v\end{pmatrix}\stackrel{h_s}{\longmapsto}
\begin{pmatrix}
t\\ \e^s\x \\ \e^{2s}v\end{pmatrix}\end{equation}
 is a homothety of $g_S$. We call $h_s$ a {\em pure homothety}.  The pure homotheties are a subgroup in the homotheties which we denote by $\R$. 
The isometries are normal in the homotheties and we have that 
\begin{equation}
\label{hprod}
h_s\left((b,\beta)\cdot(c,\epsilon, A)\right) h_s^{-1}= (\e^{2s} b,\e^s \beta)\cdot (c,\epsilon, A) .
\end{equation}
In particular, the pure homotheties commute with $\E(1)\times \cent_{\O(n)}(S)$. 
This yields
\begin{proposition}\label{homoprop}
The homothety group of a Cahen-Wallach space is isomorphic to 
 \[ \Hei_n\rtimes_\varphi \left(\E(1)\times \cent_{\O(n)}(S)\times \R\right), \]
where $\vf:\E(1)\times \cent_{\O(n)}(S)\times \R\to \Aut(\Hei_n)$ is defined as 
\[\vf{(c,\epsilon,A,s)} (b,\beta) :=\left( \epsilon \e^{2s} b, \e^sA \beta\circ E_{c,\epsilon}^{-1}\right).\]
\end{proposition}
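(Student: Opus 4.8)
The plan is to identify the homothety group abstractly as the semidirect product $\Hei_n\rtimes_\varphi(\E(1)\times\cent_{\O(n)}(S)\times\R)$ by first establishing that it is generated by the isometries together with the one-parameter group of pure homotheties $\{h_s\}$, and then computing the conjugation action of the added $\R$-factor on the normal subgroup $\Hei_n$.

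\emph{Step 1: The homothety group is generated by isometries and pure homotheties.} By the remarks preceding Proposition~\ref{homoprop}, the map $\Homoth(\R^{n+2},g_S)\to\R$, $\phi\mapsto s$ with $\phi^*g_S=\e^{2s}g_S$, is a group homomorphism with kernel $\Iso(\R^{n+2},g_S)$, and since each $h_s$ realises the value $s$, this homomorphism is surjective with a section $s\mapsto h_s$. Hence $\Homoth(\R^{n+2},g_S)=\Iso(\R^{n+2},g_S)\rtimes\R$ as an abstract group, where the $\R$ acts by conjugation by $h_s$. (This uses only the general discussion of $\Homoth=\Iso\rtimes H$ together with the explicit $h_s$ of \eqref{hs}; no extra geometric input is needed once we know the $h_s$ are homotheties, which is the direct computation $h_s^*g_S=\e^{2s}g_S$ from \eqref{cwmetric}.)

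\emph{Step 2: Compute the conjugation action.} Using Proposition~\ref{isoprop}, write the isometry group as $\Hei_n\rtimes_\alpha(\E(1)\times\cent_{\O(n)}(S))$. From \eqref{hprod}, conjugation by $h_s$ fixes the $\E(1)\times\cent_{\O(n)}(S)$-factor pointwise and sends $(b,\beta)\in\Hei_n$ to $(\e^{2s}b,\e^s\beta)$; one checks this is an automorphism of $\Hei_n$ by \eqref{Aomega}-type reasoning, namely that $\omega(\e^s\beta,\e^s\hat\beta)=\e^{2s}\omega(\beta,\hat\beta)$ matches the scaling of the central coordinate, so that $(\e^{2s}b,\e^s\beta)+_\omega(\e^{2s}\hat b,\e^s\hat\beta)=(\e^{2s}(b+\hat b+\omega(\beta,\hat\beta)),\e^s(\beta+\hat\beta))$. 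Combining this with $\alpha$ (which, by Proposition~\ref{isoprop}, sends $(c,\epsilon,A)$ to $(b,\beta)\mapsto(\epsilon b,A\beta\circ E_{c,\epsilon}^{-1})$) gives the combined action of $\E(1)\times\cent_{\O(n)}(S)\times\R$ on $\Hei_n$ by $\varphi_{(c,\epsilon,A,s)}(b,\beta)=(\epsilon\e^{2s}b,\e^sA\beta\circ E_{c,\epsilon}^{-1})$, precisely the formula in the statement; commutativity of the pure homotheties with $\E(1)\times\cent_{\O(n)}(S)$ (also noted before the proposition) ensures the three factors genuinely form a direct product $\E(1)\times\cent_{\O(n)}(S)\times\R$ acting on $\Hei_n$. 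Associativity of the resulting semidirect product is automatic once $\varphi$ is verified to be a homomorphism into $\Aut(\Hei_n)$.

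\emph{Main obstacle.} The only non-formal point is Step~1's claim that \emph{every} homothety lies in this group, i.e.\ that the kernel of $\phi\mapsto s$ is exactly the isometry group we described. The kernel is by definition $\Iso(\R^{n+2},g_S)$, and for invertible $S$ this was identified in Proposition~\ref{isoprop} using that an isometry preserves the parallel null field $\partial_v$ (the argument is deferred to \cite[Section~4.2]{stuart-thesis}). So modulo that cited computation there is no real obstacle; the remainder is the bookkeeping of semidirect-product structure constants, which is routine given \eqref{isom}, \eqref{product}, \eqref{hprod}, and the scaling behaviour of $\omega$. One should take care that $h_s$ for $s\neq0$ is a \emph{strict} homothety, so $\R\cap\Iso=\{e\}$ and the extension is indeed a semidirect and not merely a subdirect product.
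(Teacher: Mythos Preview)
Your proposal is correct and follows essentially the same route as the paper: the proposition is stated as an immediate consequence of the preceding discussion, which establishes $\Homoth=\Iso\rtimes\R$ via the section $s\mapsto h_s$, computes the conjugation action \eqref{hprod}, and observes that pure homotheties commute with $\E(1)\times\cent_{\O(n)}(S)$. Your write-up is somewhat more explicit in verifying that the scaling of $\omega$ makes $(b,\beta)\mapsto(\e^{2s}b,\e^s\beta)$ a Heisenberg automorphism and in flagging the reliance on Proposition~\ref{isoprop} for the identification of the kernel, but the argument is the same.
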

In the following we will denote 
\begin{equation}\label{defHS}
H_{S} := \Hei_n\rtimes_\vf \left(\E(1)\times \cent_{\O(n)}(S) \times \R\right),
\end{equation}
and identify it with the 
the homothety group of the Cahen-Wallach space $(\R^{n+2},g_S)$. From the proposition it follows that there is a surjective group homomorphism
\[H_S\longrightarrow H_S/\Hei_n \simeq \E(1)\times \cent_{\O(n)}(S)\times \R,\]
and for a homothety $\phi$ we denote the image under this projection by \[(E_\phi,A_\phi,s_\phi)=(c_\phi,\epsilon_\phi,A_\phi,s_\phi)\in \E(1)\times \cent_{\O(n)}(S)\times \R
=
(\R\ltimes \Z_2)  \times \cent_{\O(n)}(S)\times \R.\]
%
%{\red
%The group product in the homotheties explicitly is
%\[
%\left( (b,\beta,c),s \right)) \cdot \left( (\hat b,\hat \beta,\hat c),\hat s) \right)
%=\left( (b,\beta) +_\omega  \vf_s (\hat b, \sigma_{-c}(\hat\beta)),c+\hat c,s+\hat s\right).
%\]
%We introduce some conventions: every homothety $\phi$ is given by a $6$-tuple
%\[\phi=(b_\phi, \beta_\phi, c_\phi,\epsilon_\phi, A_\phi, s_\phi),\]
%where $(b_\phi, \beta_\phi)\in \Hei_n$, $E_\phi:=E_{c_\phi, \epsilon_\phi}\in \E(1)$ defined by $E_\phi(t)=\epsilon_\phi t+ c_\phi$, $A_\phi\in \cent_{\O(n)}(S)$, and $s_\phi\in \R$.}
%

\begin{remark}
 It was already noted in \cite{Alekseevski85}, see also \cite{Podoksenov89,Podoksenov92}, that the diffeomorphism $h_s$ in (\ref{hs})
is a homothety for many Lorentzian metrics, namely those of the form
\[
2 \d t  \left( \d v + P_{ij}(t) x^i\d x^j + \left(Q_{ij}(t) x^ix^j+ R(t) v\right)\d t\right) +\delta_{ij}\d x^i \d x^j,\]
including the so-called pp-waves, of which the Cahen-Wallach metrics are a special case.\end{remark}

In the non Weyl-flat case  with $n\ge 2$,  the conformal group of $(\R^{n+2},g_S)$,  reduces to the homotheties by Corollary~\ref{cor_locally-symmetric-implies-Cahen-Kerbrat-result}.  
\begin{corollary}\label{curvedconf}
Let $(\R^{n+2},g_S)$ be a Cahen-Wallach space of dimension $n+2\ge 4$ such that $S$  has at least two different eigenvalues, i.e.~$g_S$ is not Weyl-flat.  If $\phi:U\to\phi(U)$ is a conformal transformation on an open set $U$,  then $\phi$ is a homothety. In particular,
\[\Conf(\R^{n+2},g_S)=H_S= \Hei_n\rtimes_\vf \left(\E(1)\times \cent_{\O(n)}(S)\times\R  \right) .\]
\end{corollary}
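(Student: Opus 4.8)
The plan is to obtain this corollary as an immediate synthesis of three results already established above, so the proof should be short.

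First I would record that a Cahen--Wallach metric is locally symmetric: equation~(\ref{cwcurv}) gives $\RR=-S\owedge(\d t)^2$ with $S$ and $\d t$ both parallel, hence $\nabla\RR=0$. Since $(\R^{n+2},g_S)$ is connected and $n+2\ge 4$, Corollary~\ref{cor_locally-symmetric-implies-Cahen-Kerbrat-result} applies and tells us that every conformal diffeomorphism $\phi:U\to\phi(U)$ between open subsets of $\R^{n+2}$ is a homothety, unless the Weyl tensor of $g_S$ vanishes identically.

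Next I would discard the exceptional alternative by invoking the hypothesis. A matrix with at least two distinct eigenvalues is not scalar, so Proposition~\ref{Wflatprop} shows $g_S$ is not conformally flat, i.e.\ $\W\not\equiv 0$. Hence every conformal transformation $\phi:U\to\phi(U)$ is a homothety; taking $U=\R^{n+2}$ yields $\Conf(\R^{n+2},g_S)\subseteq\Homoth(\R^{n+2},g_S)$, and the reverse inclusion is trivial, so the two groups coincide. To finish, I would cite Proposition~\ref{homoprop}, which identifies $\Homoth(\R^{n+2},g_S)$ with $\Hei_n\rtimes_\vf(\E(1)\times\cent_{\O(n)}(S)\times\R)=H_S$ as in~(\ref{defHS}).

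I do not anticipate a real obstacle, since the substantive content is already in Theorem~\ref{prop_cahen-kerbrat-weyl-curved-implies-no-conformals}, Proposition~\ref{Wflatprop} and Proposition~\ref{homoprop}. The one point I would be mildly careful about is that the Cahen--Kerbrat dichotomy is local, with its ``Weyl vanishes'' branch referring to the whole connected manifold; so one must feed in the \emph{global} failure of conformal flatness coming from $S$ not being scalar, not merely a pointwise curvature condition. I would also remark that the stated hypothesis $n+2\ge 4$ is in fact forced by $S$ having two distinct eigenvalues (which requires $n\ge 2$), and that no conformally curved Cahen--Wallach space occurs when $n=1$, a $1\times 1$ matrix being automatically scalar.
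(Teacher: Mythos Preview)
Your proposal is correct and matches the paper's approach exactly: the paper presents this corollary without a separate proof, treating it as the immediate combination of Corollary~\ref{cor_locally-symmetric-implies-Cahen-Kerbrat-result} (via $\nabla\RR=0$), Proposition~\ref{Wflatprop}, and Proposition~\ref{homoprop}. Your additional remarks about the global nature of the Weyl-vanishing alternative and about $n\ge 2$ being forced are accurate and welcome clarifications.
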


\subsection{Fixpoints and essential homotheties}
\label{fixpoint-sec}

	In this section, we will prove Theorem~\ref{intro-esstheo}. For this we detail various sufficient conditions for a homothety of a Cahen-Wallach space to have a fixed point and hence be essential. Using this, we will show the converse of Proposition~\ref{prop_finite-orbit-homoth-implies-essential}.
Throughout  this section, we denote by
$\phi=\phi_{c,\epsilon,b,\beta,A,s}$ the homothety
	\begin{equation}\label{homoth1}
				%(\beta,b,a,c,A):=
\phi=\phi_{c,\epsilon,b,\beta,A,s}\ :\ \begin{pmatrix}t\\\veccy{x}\\v\end{pmatrix} {\longmapsto} \begin{pmatrix}\epsilon\, t+c \\ \e^sA\veccy{x}+\beta(t) \\ \epsilon \left(\e^{2s}v+b-\< \dot\beta(t),A\boldsymbol{x}+\frac12\beta(t)\>\right)\end{pmatrix},
				%\psi\begin{pmatrix}t\\\veccy{x}\\v\end{pmatrix} = \psi_{\beta,b,a,c,A}\begin{pmatrix}t\\\veccy{x}\\v\end{pmatrix} := \begin{pmatrix}at+c \\ A\veccy{x}+\beta(t) \\ a\left(v+b- \dot\beta(t)^\transpose(A\boldsymbol{x}+\frac12\beta(t))\right)\end{pmatrix}	%This version uses transpose instead of \<\>
			\end{equation}
 and by $E_\phi$ the Euclidean motion $E_{c,\epsilon}$ that maps $t$ to $\epsilon\,t+c$.
		\begin{proposition}\label{prop_homothetic-fixed-points}
			%Let $\phi 
			%= (\beta,b,a,c,A,s)$ 
A strict homothety $\phi$ of
			$(\R^{n+2},g_S)$ has a fixed point if and only if the Euclidean motion $E_\phi$ of $\R$  has a fixed point, i.e.~if and only if  $\epsilon=-1$ or $c=0$.
		\end{proposition}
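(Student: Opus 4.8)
The plan is to analyse the fixed-point equation of $\phi = \phi_{c,\epsilon,b,\beta,A,s}$ componentwise, exploiting that the $t$-component decouples from the rest. Writing the fixed-point condition for a point $(t,\x,v)$, the first component gives $t = \epsilon t + c$, i.e.\ $t = E_\phi(t)$; so a necessary condition for $\phi$ to have a fixed point is that the Euclidean motion $E_\phi$ of $\R$ has a fixed point. Since $E_{c,\epsilon}(t) = \epsilon t + c$ with $\epsilon \in \{\pm 1\}$, this motion has a fixed point precisely when $\epsilon = -1$ (reflection about $c/2$) or when $\epsilon = 1$ and $c = 0$ (the identity on $\R$), and no fixed point when $\epsilon = 1$, $c \neq 0$ (a nontrivial translation). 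This establishes one direction and pins down the stated arithmetic condition.

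For the converse I would fix a value $t_0$ with $E_\phi(t_0) = t_0$ and solve the remaining two equations for $\x$ and $v$. The $\x$-component equation at $t = t_0$ reads $\x = \e^s A \x + \beta(t_0)$, i.e.\ $(\1 - \e^s A)\x = \beta(t_0)$. Here the key point is that since $\phi$ is a \emph{strict} homothety we have $s \neq 0$, and $A \in \cent_{\O(n)}(S) \subset \O(n)$ is orthogonal; hence $\e^s A$ has all eigenvalues of modulus $\e^s \neq 1$, so $\1 - \e^s A$ is invertible and there is a unique solution $\x_0 = (\1 - \e^s A)^{-1}\beta(t_0)$. Finally the $v$-component equation becomes, after substituting $t = t_0$ and $\x = \x_0$, a single scalar equation of the form $v = \epsilon(\e^{2s} v + b - \langle \dot\beta(t_0), A\x_0 + \tfrac12\beta(t_0)\rangle)$, i.e.\ $(1 - \epsilon \e^{2s})v = \epsilon(b - \langle\dot\beta(t_0), A\x_0+\tfrac12\beta(t_0)\rangle)$. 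Since $s \neq 0$ and $\epsilon = \pm 1$, the coefficient $1 - \epsilon\e^{2s}$ is nonzero, so this determines $v_0$ uniquely. Thus $(t_0,\x_0,v_0)$ is a fixed point.

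The main obstacle — really the only place one must be slightly careful — is the invertibility of $\1 - \e^s A$: one must use both that $s \neq 0$ (strictness of the homothety, guaranteeing $\e^s \neq 1$) and that $A$ is orthogonal (so its eigenvalues lie on the unit circle and $\e^s A$ has no eigenvalue equal to $1$). Everything else is elementary linear algebra, and the decoupled triangular structure of \eqref{homoth1} in the blocks $t \rightsquigarrow \x \rightsquigarrow v$ makes the back-substitution straightforward. I would also remark that when $\phi$ is an isometry ($s=0$) the argument breaks down exactly as expected, since then $\1 - A$ need not be invertible, which is why the hypothesis that $\phi$ is strict is used.
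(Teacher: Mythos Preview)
Your proof is correct and follows essentially the same approach as the paper: both argue that the $t$-component decouples to give the necessary condition, and for the converse both solve the triangular system at a fixed $t_0$ of $E_\phi$, using that $s\neq 0$ and $A$ orthogonal force $\e^sA-\1$ and $1-\epsilon\e^{2s}$ to be invertible. The paper simply writes down the resulting fixed point explicitly and remarks that strictness is what makes the inverses exist, whereas you spell out the eigenvalue reasoning for the invertibility of $\1-\e^sA$ a bit more fully.
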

		\begin{proof}
		Recall that  $E_\phi$ is defined by $E_\phi (t)=\epsilon\, t+c$. If $E_\phi$ has no fixed point, then $\phi$  cannot fix any point.
		
			Conversely, let $t$ be a fixed point of $E_\phi$. Then one can verify that
			$$\begin{pmatrix}t\\-(\e^sA-\1)^{-1}\beta(t)\\ -(\e^{2s}a-1)^{-1}a\left(b-\<\dot\beta(t),-\e^sA(\e^sA-I_n)^{-1}\beta(t)+\frac12\beta(t)\>\right)\end{pmatrix}$$
			is a fixed point of $\phi$. The assumption that $\phi$ is a strict homothety is crucial for the inverses of  $(\e^{2s}\epsilon-1)$ and $(\e^sA-\1)$ to exist.
		\end{proof}
		\begin{lemma}\label{lem_isometry-a=-1-fixed-points}
			Let $\phi$ be an isometry of $(\R^{n+2},g_S)$ with $\epsilon=-1$.
			Then $\phi$ fixes a point if and only if $\veccy x \mapsto A\veccy x +\beta(\tfrac{c}{2})$ fixes a point.
		\end{lemma}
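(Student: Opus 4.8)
The plan is to analyze the fixed-point equation for $\phi$ componentwise, using the explicit formula (\ref{homoth1}) with $s=0$ since $\phi$ is an isometry. A point $(t,\x,v)$ is fixed by $\phi$ precisely when $\epsilon\,t+c=t$, $A\x+\beta(t)=\x$, and $\epsilon(v+b-\langle\dot\beta(t),A\x+\tfrac12\beta(t)\rangle)=v$. With $\epsilon=-1$, the first equation forces $t=\tfrac{c}{2}$, so the $t$-coordinate of any fixed point is determined and the ``time'' $t=\tfrac{c}{2}$ is itself the fixed point of $E_\phi$. Substituting $t=\tfrac c2$ into the $\x$-equation gives $A\x+\beta(\tfrac c2)=\x$, i.e.\ $(\1-A)\x=\beta(\tfrac c2)$, which is exactly the condition that the affine map $\x\mapsto A\x+\beta(\tfrac c2)$ of $\R^n$ has a fixed point.

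So the two remaining directions of the proof are: first, if $\x\mapsto A\x+\beta(\tfrac c2)$ has a fixed point $\x_0$, then $(\tfrac c2,\x_0,v)$ is fixed by $\phi$ for a suitable $v$; second, if $\phi$ has a fixed point, then that fixed point projects to a fixed point of $\x\mapsto A\x+\beta(\tfrac c2)$. The second direction is immediate from the computation above. For the first, I would take $\x=\x_0$, $t=\tfrac c2$, and then solve the third equation for $v$: with $\epsilon=-1$ it reads $-(v+b-\langle\dot\beta(\tfrac c2),A\x_0+\tfrac12\beta(\tfrac c2)\rangle)=v$, i.e.\ $2v=-b+\langle\dot\beta(\tfrac c2),A\x_0+\tfrac12\beta(\tfrac c2)\rangle$, which always has the unique solution $v=\tfrac12(\langle\dot\beta(\tfrac c2),A\x_0+\tfrac12\beta(\tfrac c2)\rangle-b)$. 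Here it is the sign $\epsilon=-1$ that makes $(\1+\mathrm{Id})$, equivalently the coefficient $1-\epsilon=2$, invertible on the $v$-line, so no further hypothesis is needed — contrast with Proposition~\ref{prop_homothetic-fixed-points}, where strictness of the homothety was what guaranteed invertibility.

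Putting these together yields the equivalence. I would present it as: ``$(\Leftarrow)$'' pick the fixed point $\x_0$ of the affine map, set $t=\tfrac c2$ and $v$ as above, and verify directly that $\phi(t,\x_0,v)=(t,\x_0,v)$ by checking all three components; ``$(\Rightarrow)$'' given $\phi(t,\x,v)=(t,\x,v)$, the first component forces $t=\tfrac c2$ and then the second component says $(\1-A)\x=\beta(\tfrac c2)$, so $\x$ is a fixed point of $\x\mapsto A\x+\beta(\tfrac c2)$.

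I do not expect a genuine obstacle here; the lemma is a direct specialization of the reasoning in Proposition~\ref{prop_homothetic-fixed-points} to the isometry case $s=0$ together with $\epsilon=-1$, and the only point requiring a word of care is that for an isometry $\e^{s}A-\1=A-\1$ need not be invertible, which is exactly why the conclusion is phrased in terms of the affine map $\x\mapsto A\x+\beta(\tfrac c2)$ having a fixed point rather than asserting existence outright. The mild subtlety worth flagging explicitly is that $\dot\beta(\tfrac c2)$ and $\beta(\tfrac c2)$ depend only on the already-fixed value $t=\tfrac c2$, so the $v$-equation is genuinely a single scalar linear equation in $v$ with leading coefficient $1-\epsilon=2\neq 0$, hence always solvable.
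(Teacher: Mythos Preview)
Your proof is correct and follows exactly the same approach as the paper: reduce the fixed-point equation componentwise, use $\epsilon=-1$ to force $t=\tfrac{c}{2}$, observe that the $\x$-equation is precisely the affine fixed-point condition, and solve the $v$-equation using that the coefficient $1-\epsilon=2$ is nonzero. The paper's proof is terser but the content and the explicit value of $v$ agree with yours.
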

		\begin{proof}
			Note that $\tfrac{c}{2}$ is the unique fixed point of $E_\phi=E_{c,-1}:t\mapsto -t+c$, so if $A\veccy x+\beta(\tfrac{c}{2})$ does not have a fixed point, then $\phi$ cannot fix any point.

			Conversely, let $\veccy y=A\veccy y+\beta(\tfrac{c}{2})$. Then one can check that
			$$\begin{pmatrix}\tfrac{c}{2}\\\veccy y\\-\frac12\left(b-\<\dot\beta(\tfrac{c}{2}),A\veccy y+\frac12\beta(\tfrac{c}{2})\>\right)\end{pmatrix}$$
			is a fixed point of $\phi$.
		\end{proof}
		\begin{proposition}\label{prop_torsion-implies-fixed-point}
			Let $\phi$ be a homothety of $(\R^{n+2},g_S)$ with $\phi^k=\id$ for some $k>0$. Then $\phi$ fixes a point.
		\end{proposition}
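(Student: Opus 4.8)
The plan is to first observe that the hypothesis $\phi^k=\id$ forces $\phi$ to be an isometry, and then to locate a fixed point by descending through the coordinate layers $t,\veccy x,v$ on successively smaller $\phi$-invariant affine subspaces. Concretely, applying the group homomorphism $\Homoth(\R^{n+2},g_S)\to\R$, $\phi\mapsto s_\phi$, whose kernel is $\Iso(\R^{n+2},g_S)$, the relation $\phi^k=\id$ gives $ks_\phi=0$, hence $s_\phi=0$ and $\phi=\psi_{c,\epsilon,b,\beta,A}$ is an isometry, i.e.\ $s=0$ in (\ref{homoth1}). Its image $E_\phi=E_{c,\epsilon}\in\E(1)$ then satisfies $E_\phi^k=\id$, and a finite-order element of $\E(1)=\R\rtimes\Z_2$ always fixes a point of $\R$: if $\epsilon=1$ then $E_\phi$ is the translation by $c$, so $kc=0$ forces $c=0$ and every $t$ is fixed; if $\epsilon=-1$ then $t_0=\tfrac c2$ is fixed. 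Fix such a $t_0$.

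Next I would use that, since $\phi$ acts on the $t$-coordinate by $E_\phi$, the hyperplane $P=\{t=t_0\}$ is $\phi$-invariant, and on $P$, in coordinates $(\veccy x,v)$ and with $s=0$, the map $\phi$ acts by $(\veccy x,v)\mapsto\bigl(A\veccy x+\beta(t_0),\ \epsilon v+\epsilon b-\epsilon\langle\dot\beta(t_0),A\veccy x+\tfrac12\beta(t_0)\rangle\bigr)$. The induced affine map $f\colon\veccy x\mapsto A\veccy x+\beta(t_0)$ of $\R^n$ satisfies $f^k=\id$, and every finite-order affine transformation of $\R^n$ has a fixed point — for instance the centroid $\tfrac1k\sum_{i=0}^{k-1}f^i(\veccy p)$ of any orbit — so choose a fixed point $\veccy x_0$ of $f$. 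On the resulting $\phi$-invariant line $\{t=t_0,\ \veccy x=\veccy x_0\}$ the map $\phi$ then acts on $v$ by $v\mapsto\epsilon v+d$ with the constant $d=\epsilon\bigl(b-\langle\dot\beta(t_0),A\veccy x_0+\tfrac12\beta(t_0)\rangle\bigr)$; if $\epsilon=-1$ this fixes $v_0=\tfrac d2$, and if $\epsilon=1$ then $\phi^k=\id$ forces $kd=0$, so $d=0$ and any $v_0$ works. In every case $(t_0,\veccy x_0,v_0)$ is a fixed point of $\phi$.

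I do not expect a genuine obstacle here; the only point requiring a little care is the $v$-layer, namely verifying that once $t$ and $\veccy x$ are frozen at a common fixed value the $v$-action is exactly affine (not merely affine up to lower-order terms), so that the same ``finite-order affine map has a fixed point'' principle — together with $\phi^k=\id$ in the $\epsilon=1$ case — closes the argument. One could alternatively dispatch the $\epsilon=-1$ case via Lemma~\ref{lem_isometry-a=-1-fixed-points} and handle only $\epsilon=1$ by hand, but the uniform slice-by-slice descent above seems cleanest.
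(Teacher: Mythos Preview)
Your argument is correct and follows essentially the same slice-by-slice descent as the paper's proof: fix $t_0$, then $\veccy x_0$ via the centroid of a finite orbit, then $v_0$. The only minor differences are that you reduce to the isometry case directly via the homomorphism $\phi\mapsto s_\phi$ (the paper instead invokes Proposition~\ref{prop_homothetic-fixed-points} after finding a fixed $t_0$), and you treat $\epsilon=\pm1$ uniformly in the $v$-layer rather than citing Lemma~\ref{lem_isometry-a=-1-fixed-points} for $\epsilon=-1$; both are slight streamlinings of the same idea.
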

		\begin{proof}
			We construct a point $y$ fixed by $\phi$.
			We start with the $t$ component of $y$: If $\epsilon=1$, then $t \circ \phi^k = t+kc$. This can only fix a point for $k>0$ if $c=0$, so we conclude either $c=0$ or $\epsilon=-1$. In either case
			$$t(y):=\tfrac{c}{2}$$
			is a fixed point of $t\mapsto \epsilon\, t+c$. From this we also get by Proposition~\ref{prop_homothetic-fixed-points}, that $\phi$ either has a fixed point or is an isometry. So we assume that $\phi$ is an isometry. Define $\beta_0 := \beta(\tfrac{c}{2})$ and $\dot\beta_0 := \dot\beta(\tfrac{c}{2})$.

			Next we consider the $\veccy x$ component: since $\phi^k(\tfrac{c}{2},0,0)=(\tfrac{c}{2},0,0)$ we get that the Euclidean motion $E_{\beta_0,A}(\veccy x) = A\veccy x + \beta_0$ satisfies $E^k_{\beta_0,A} \equiv \Id$. In particular, \[E^k_{\beta_0,A}(0) = \sum_{i=0}^{k-1}A^i\beta_0 = 0.\]
			In general any euclidean motion $E$ satisfying $E^k(\veccy x) = \veccy x$ fixes a point. This fixed point is given by the centre of mass (in our case, $\veccy x=0$),
			$$\veccy y := \frac1k \sum_{i=1}^k E^i_{\beta_0,A}(0).$$
			Now when $\epsilon=-1$,  Lemma~\ref{lem_isometry-a=-1-fixed-points} gives us a fixed point of $\phi$.
			When $\epsilon=1$, to have a fixed point we require that $b-\<\dot\beta_0,A\veccy y+\frac12\beta_0\> = 0$. But since $\phi(\frac{c}{2} , \veccy y, .) = (\frac{c}{2}, \veccy y,\hdots)$, we get
			$$\phi^k\begin{pmatrix}\frac{c}{2}\\\veccy y\\0\end{pmatrix} = \begin{pmatrix}\frac{c}{2}\\\veccy y\\k(b-\<\dot\beta_0,A\veccy y+\frac12\beta_0\>)\end{pmatrix} = \begin{pmatrix}\frac{c}{2}\\\veccy y\\0\end{pmatrix}.$$
			Hence $b-\<\dot\beta_0,A\veccy y+\frac12\beta_0\>=0$, and so any choice of $v(y)$, for example $v(y):= 0$ makes $(\frac{c}{2},\veccy y,0)$ a fixed point of $\phi$.
		\end{proof}

Now we give a characterisation of essential homotheties of Cahen-Wallach spaces. The non-trivial part of the proof is a special case of the results in 
 \cite{FSApaper,stuart-thesis}, however we will present it here for the sake of completeness.

\begin{theorem}\label{thm_CW-essential-iff-fixed-point}
			A strict homothety $\phi$ of a Cahen-Wallach space is essential if and only if it fixes a point.
		\end{theorem}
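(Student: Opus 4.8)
The plan is to prove both implications, using Proposition~\ref{prop_finite-orbit-homoth-implies-essential} for one direction and a direct construction of an invariant metric for the other. The easy direction is ($\Leftarrow$): if $\phi$ has a fixed point, then by Proposition~\ref{prop_finite-orbit-homoth-implies-essential} it is essential, so nothing remains. The content is in ($\Rightarrow$), which by contraposition means: \emph{if $\phi$ has no fixed point, then $\phi$ is inessential}, i.e.\ there is a smooth function $f$ on $\R^{n+2}$ with $(\e^{2f}g_S)$ invariant under $\phi$, equivalently $\phi^*(\e^{2f}g_S)=\e^{2f}g_S$, which unwinds to $f\circ\phi - f = -s$ where $\phi^*g_S=\e^{2s}g_S$. (Here $s=s_\phi$ is the log-dilation factor; if $s=0$ then $\phi$ is already an isometry and inessential trivially, so assume $s\neq 0$.)

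So the core task is: \textbf{given a fixed-point-free strict homothety $\phi=\phi_{c,\epsilon,b,\beta,A,s}$, construct $f\in C^\infty(\R^{n+2})$ solving the cohomological equation $f\circ\phi - f = -s$.} By Proposition~\ref{prop_homothetic-fixed-points}, fixed-point-freeness forces $\epsilon=1$ and $c\neq 0$; so on the $t$-coordinate $\phi$ acts by the translation $t\mapsto t+c$, which is free and proper on $\R$. The idea is to solve the equation first ``in the $t$-direction'': pick any smooth $\rho:\R\to\R$ with $\rho(t+c)-\rho(t) = -s$ for all $t$ — for instance $\rho(t) = -\tfrac{s}{c}\,t$ — and set $f(t,\x,v):=\rho(t)= -\tfrac{s}{c}\,t$. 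Then $f\circ\phi - f = \rho(t+c)-\rho(t) = -s$ identically, as required, and $f$ is manifestly smooth on all of $\R^{n+2}$. Hence $\e^{2f}g_S = \e^{-2st/c}g_S$ is a metric in the conformal class for which $\phi$ is an isometry, so $\phi$ is inessential. This proves the contrapositive and completes ($\Rightarrow$).

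The main thing to verify carefully — and where I expect the only real subtlety — is that this $f$ genuinely makes $\phi$ an isometry of $\e^{2f}g_S$, i.e.\ that $f\circ\phi - f = -s$ is exactly the condition $\phi^*(\e^{2f}g_S)=\e^{2f}g_S$, with the correct sign and no factor of the dimension creeping in. Since $\phi^*g_S = \e^{2s}g_S$, we get $\phi^*(\e^{2f}g_S) = \e^{2f\circ\phi}\phi^*g_S = \e^{2(f\circ\phi + s)}g_S$, so invariance is precisely $f\circ\phi + s = f$, i.e.\ $f\circ\phi - f = -s$; the computation $f\circ\phi - f = -\tfrac{s}{c}(t+c) + \tfrac{s}{c}t = -s$ then closes it. One should also note that this argument does not even need $S$ invertible, nor the conformally curved hypothesis — it is purely about the homothety's action on the $t$-line — which is consistent with the ``In particular'' clause of Theorem~\ref{intro-esstheo}. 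The remark after Proposition~\ref{prop_finite-orbit-homoth-implies-essential} flags that a naive version of this converse fails for general causal Lorentzian manifolds; the reason it works here is the very rigid structure \eqref{homoth1}, in which the dilation is ``carried entirely by'' the null direction $t$ that $\phi$ translates, making the cohomological equation solvable by an affine function of $t$ alone.
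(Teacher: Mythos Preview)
Your proof is correct and takes a genuinely simpler route than the paper. The paper's argument for the converse also reduces to the case $\epsilon=1$, $c\neq 0$ via Proposition~\ref{prop_homothetic-fixed-points}, but then builds the rescaling function $f$ by a partition-of-unity construction: it chooses a bump function $h$ supported near the fundamental domain $\{0\le t\le c\}$, translates it along the $\phi$-orbit to get $h_k=h\circ\phi^{-k}$, normalises to a partition of unity $f_k$, and sets $f=-s\sum_k k f_k$, so that the shift $f_k\circ\phi=f_{k-1}$ yields $f\circ\phi=f-s$. Your observation that the much simpler affine choice $f=-\tfrac{s}{c}\,t$ already solves $f\circ\phi-f=-s$ is entirely correct and cuts out the partition of unity altogether; it exploits the fact that on a Cahen--Wallach space the coordinate $t$ is globally defined and transforms by a pure translation under $\phi$. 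The paper's construction is presented as a special case of a more general result (cited from \cite{FSApaper,stuart-thesis}) that applies whenever one has a properly discontinuous cyclic action with a suitable fundamental domain, which explains why it does not take the shortcut; your argument trades that generality for a one-line proof in the case at hand.
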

		\begin{proof}
			First, if $\phi$ has a fixed point, then by Proposition~\ref{prop_finite-orbit-homoth-implies-essential}, $\phi$ is essential. For the converse, assume that 
 $\phi$ is essential but without fixed point.
			Then by Proposition~\ref{prop_homothetic-fixed-points}, $t\circ \phi(x) = t(x)+c$ for some  $c>0$. We will now construct a function $f$ such that $\phi$ is an ismometry for the metric $\e^{2f}g$.
			
The group $\langle\phi\rangle$ admits a fundamental domain $D=\R^{n+1}\times(0,c)$. Let $h:\R\to \R_{>0}$ be a smooth function of $t$ such that $h|_{[0,c]}\equiv 1$ and with support in $(-\tfrac{c}{4},\tfrac{5c}{4} )$. Then define functions $\{h_k\}_{k\in \Z}$ on $\R^{n+2}$ by
\[h_0(t,\x,v)=h(t),\qquad h_{k}:=h_0\circ \phi^{-k},\]
so that $h_k=h_{k+1}\circ\phi$.
Since $\{\mathrm{supp}(h_k)\}_{k\in \Z}$ is locally finite, the function $\sum_{k\in \Z}h_k$ is well defined. Since $D$ is a fundamental domain and $h_k\ge 0$, the function $\sum_{k\in \Z}h_k$ has no zeros. Hence,
\[f_k:=\frac{h_k}{\sum_{k\in \Z}h_k} \]
is a partition of unity on $\R^{n+2}$ that satisfies $f_{k}=f_{k+1}\circ\phi$. If $\phi^*g=\e^{2s}g$, we set
\[f:=-s\sum_{k\in \Z}k f_k,\]
which yields that $\phi$ is an isometry for $\e^{2f}g$ as 
\[
f\circ \phi= -s\sum_{k\in \Z}k f_{k-1}=f -s\sum_{k\in \Z}f_k=f-s.\]
Therefore, $\phi$ is not essential and we arrive at a contradiction, so that $\phi$ must have  a fixed point.
		\end{proof}

\section{Non-existence results for  compact conformal quotients of Cahen-Wallach spaces}
\label{compact-sec}
\subsection{Conformal compact quotients of imaginary type}
In this section we show that conformal compact quotients of Cahen-Wallach spaces of imaginary type must be isometric quotients. 

We start with some technical results about cocompact conformal group actions of Cahen-Wallach spaces. 
Recall the definition of $H_S$ in (\ref{defHS}), 
which is the group of homotheties of the Cahen-Wallach space $(\R^{n+2},g_S)$, i.e.~when $S$ is invertible. The technical statements that will lead up to Theorem~\ref{thm_no-homoth-quotients-of-imaginary-CW} however will not require that $S$ is invertible.
First we show that cyclic groups of homotheties cannot act cocompactly.

	\begin{lemma}\label{lem_cyclic-isnt-cocompact}
			If $\gamma\in H_S$, then
			 $\<\gamma\>$ does not act cocompactly.
		\end{lemma}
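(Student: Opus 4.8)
The plan is to suppose, for a contradiction, that $M:=\R^{n+2}/\<\gamma\>$ is compact and to argue according to how $\gamma$ acts in the $t$-direction. Writing $\gamma=\phi_{c,\epsilon,b,\beta,A,s}$ as in (\ref{homoth1}), we have $t\circ\gamma=\epsilon t+c$, so the projection $\pi_t:\R^{n+2}\to\R$, $(t,\x,v)\mapsto t$, intertwines $\<\gamma\>$ with the group generated by $E_\phi$ on $\R$. This suggests splitting into the cases $\epsilon=-1$, $\epsilon=1$ with $c=0$, and $\epsilon=1$ with $c\neq0$.

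In the first two cases I would simply exhibit an unbounded continuous $\<\gamma\>$-invariant function on $\R^{n+2}$. If $\epsilon=-1$, then $\tfrac c2$ is the fixed point of $t\mapsto-t+c$, so $(t,\x,v)\mapsto|t-\tfrac c2|$ is $\<\gamma\>$-invariant; if $\epsilon=1$ and $c=0$, then $(t,\x,v)\mapsto t$ is $\<\gamma\>$-invariant. Either function descends to a continuous function on $M$, and a compact space carries no unbounded continuous function, a contradiction. Note these two cases already cover every finite $\<\gamma\>$: when $\epsilon=1$ one has $t\circ\gamma^k=t+kc$, so $\gamma^k=\id$ forces $c=0$.

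The remaining case $\epsilon=1$, $c\neq0$ is the real obstacle, since then $\gamma$ projects to a nontrivial translation of $\R$, $\pi_t$ induces a continuous surjection $M\to\R/\<E_\phi\>\cong S^1$, and no invariant function of the above kind exists, so a quantitative argument is needed. I would estimate Lebesgue measure on the null screens $\Sigma_N:=\{t=N\}\cong\R^{n+1}$ (coordinates $\x,v$). The crucial observation is that $\gamma$ maps $\Sigma_{t_0}$ affinely onto $\Sigma_{t_0+c}$, and by (\ref{homoth1}) this affine map has linear part of block form $\left(\begin{smallmatrix}\e^sA & 0\\ \ast & \e^{2s}\end{smallmatrix}\right)$ on $\R^n\oplus\R$, hence Jacobian of modulus $|\det(\e^sA)|\,\e^{2s}=\e^{(n+2)s}$ \emph{independently of $t_0$}; so $\gamma^k$ scales the $(n+1)$-dimensional Lebesgue measure of a subset of a screen by $\e^{(n+2)ks}$. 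Granting compactness of $M$, openness of the quotient map $\R^{n+2}\to M$ gives a compact set with $\<\gamma\>$-saturation equal to $\R^{n+2}$ (take closures of finitely many relatively compact opens whose images cover $M$), and enlarging it we may assume $\<\gamma\>\cdot B=\R^{n+2}$ for the box $B=\{|t|\le R,\ \|\x\|\le R,\ |v|\le R\}$; replacing $\gamma$ by $\gamma^{-1}$ if needed, assume $c>0$. Fix $N$. From $t\circ\gamma^k=t+kc$ and $B\subset\{|t|\le R\}$ one gets $\gamma^k(B)\cap\Sigma_N=\gamma^k\big(B\cap\Sigma_{N-kc}\big)$, which is nonempty only for the at most $\tfrac{2R}{c}+1$ integers $k$ with $N-kc\in[-R,R]$, and for each such $k$ it has $(n+1)$-measure at most $\e^{(n+2)ks}$ times a constant depending only on $R$ and $n$. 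Since $\Sigma_N=\bigcup_k\big(\gamma^k(B)\cap\Sigma_N\big)$, summing these finitely many contributions bounds the Lebesgue measure of $\Sigma_N\cong\R^{n+1}$ by a finite number, which is absurd. (Heuristically $M$ would be a bundle over $S^1$ with non-compact fibre $\R^{n+1}$, hence non-compact; the measure estimate makes this precise without verifying any bundle structure.) Thus the only genuine difficulty is this last case, and the computation unlocking it is the $t_0$-independence of the Jacobian modulus $\e^{(n+2)s}$ of the screen maps.
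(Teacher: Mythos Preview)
Your proof is correct, and the third case is where the substance lies, as you say. The paper proceeds differently. It first replaces $\gamma$ by $\gamma^2$ to force $\epsilon=1$, collapsing your first two cases into one (the invariant function $t$), and for $c\neq0$ it simply observes that the slab $D=\{0<t<c\}$ is a finitely self adjacent fundamental region for $\langle\gamma^2\rangle$ and then invokes Lemma~\ref{FSAlemma}: a finitely self adjacent fundamental region for a cocompact action must have compact closure, which $\overline{D}=[0,c]\times\R^{n+1}$ does not.

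Your measure argument on the screens $\Sigma_N$ is essentially an ad hoc proof of that lemma in this particular situation. It has the advantage of being self-contained, avoiding the external reference to \cite{FSApaper,stuart-thesis}, and it makes the geometric reason visible: the slab $\{|t|\le R\}$ meets only finitely many translates of $B$, each of which intersects $\Sigma_N$ in a bounded set. Note that your Jacobian computation, while correct, is more than you need---boundedness of each $\gamma^k(B\cap\Sigma_{N-kc})$ already follows from $\gamma^k$ being affine on screens, so finiteness of its $(n{+}1)$-measure is immediate without the explicit factor $\e^{(n+2)ks}$. The paper's route is shorter once Lemma~\ref{FSAlemma} is in hand and isolates the reusable principle; yours gets there with bare hands.
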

		\begin{proof}  
Assume that $\< \gamma \>$  acts cocompactly. Then $\Gamma:=\<\gamma^2\> $ acts cocompactly and we have that $t\circ\gamma^2 =t+c$, where $t$ is the coordinate 
\begin{align*}
				t:\R^{n+2} \to \R,\qquad (t,\x, v)\mapsto t.
			\end{align*}
If $c=0$, then 
the smooth map $t$ 
is invariant under $\Gamma$ and descends to a smooth, surjective map  $ f:\R^{n+2}/\Gamma \to \R$.
This contradicts the compactness of $\R^{n+2}/\Gamma$.

If  $c\neq 0$, 
then $D=\R^{n+1}\times(0,c)$ is a finitely self adjacent fundamental domain, see
%\cref{prop_quotienting-with-a-translation-component}
\cite{FSApaper,stuart-thesis}, and hence, by Lemma~\ref{FSAlemma}, $M/\<\gamma\>$ cannot be compact. 
		\end{proof}

In regards to a group of homotheties acting properly discontinuous, we obtain from Proposition~\ref{prop_homothetic-fixed-points} the following corollary.

\begin{corollary}\label{cor:fp}
If $\Gamma$ is a group of homotheties acting properly discontinuously, then every $\gamma \in \Gamma \setminus \Iso$ must have $\epsilon =1$ and $c\not=0$. 
\end{corollary}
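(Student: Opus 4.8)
The plan is to combine Proposition~\ref{prop_homothetic-fixed-points} with Proposition~\ref{prop_finite-orbit-homoth-implies-essential} and the consequence of proper discontinuity that the action is free. Let $\Gamma$ be a group of homotheties acting properly discontinuously, and take $\gamma \in \Gamma \setminus \Iso$, i.e.\ a strict homothety in $\Gamma$. Write $\gamma = \phi_{c,\epsilon,b,\beta,A,s}$ as in (\ref{homoth1}), so that the associated Euclidean motion is $E_\gamma = E_{c,\epsilon}$.

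The first observation is that $\gamma$ cannot have a fixed point: if it did, then by Proposition~\ref{prop_finite-orbit-homoth-implies-essential} it would in particular fix that point while being a strict homothety, but (PD1) forces $\Gamma$ to act freely, so no nontrivial element of $\Gamma$ can have a fixed point. (Even more directly, a fixed point of $\gamma$ contradicts freeness of the action.) Hence $\gamma$ is a fixed-point-free strict homothety.

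Now apply Proposition~\ref{prop_homothetic-fixed-points}: a strict homothety of $(\R^{n+2},g_S)$ has a fixed point if and only if its underlying Euclidean motion $E_\gamma$ of $\R$ has one, which happens precisely when $\epsilon = -1$ or $c = 0$. Since $\gamma$ has no fixed point, we conclude $\epsilon = 1$ and $c \neq 0$, which is exactly the assertion. I do not expect any real obstacle here — the corollary is essentially a packaging of the two earlier propositions together with the definition of proper discontinuity; the only thing to be careful about is to invoke freeness (from (PD1)) rather than anything stronger, and to make explicit that "strict homothety" is precisely the meaning of $\gamma \in \Gamma \setminus \Iso$ so that Proposition~\ref{prop_homothetic-fixed-points} applies verbatim.
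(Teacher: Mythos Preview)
Your proof is correct and follows essentially the same route as the paper: the corollary is presented there as an immediate consequence of Proposition~\ref{prop_homothetic-fixed-points} together with freeness of properly discontinuous actions (PD1). The reference to Proposition~\ref{prop_finite-orbit-homoth-implies-essential} is harmless but unnecessary, as you yourself note.
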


Next, we find an obstruction for a homothety group acting properly discontinuously and cocomapactly. Recall that  $\Sigma_{+}$ denotes the set of positive eigenvalues of $S$. 
		\begin{proposition}\label{prop_good-homoth-quotient-implies-exponential-beta}
			Let $\Gamma\subset H_S$ be a subgroup 
that			acts  cocompactly. 
If  $\Gamma$ contains a strict homothety \[\gamma=(c_\gamma,A_\gamma,s_\gamma)\in \R\times \cent_{\O(n)}(S) \times \R\] with the property
\begin{equation}\label{lcs}
\text{
%$s+\lambda_ic>0\quad$ and
 $ s^2_\gamma -\lambda^2_ic_\gamma^2>0\quad$
for all $\lambda_i^2\in \Sigma_+$},\end{equation}
then $\Gamma$ cannot act properly discontinuously.
%then for all elements $\phi$ of $\Gamma$ not contained in $\<\gamma\>$, $\beta_\phi$ grows exponentially.
		\end{proposition}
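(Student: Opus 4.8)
The plan is to suppose for contradiction that $\Gamma$ acts both properly discontinuously and cocompactly, and then to exploit the growth of the functions $\beta\in V_S$ along the positive eigendirections of $S$ to manufacture a sequence of orbit points that cannot be moved out of a fixed compact fundamental region, contradicting proper discontinuity. Since $\Gamma$ acts cocompactly, by Lemma~\ref{lem_cyclic-isnt-cocompact} the group $\Gamma$ itself is not cyclic, but more importantly cocompactness gives us a fixed compact set $K\subset\R^{n+2}$ whose $\Gamma$-translates cover $\R^{n+2}$. The first step is to record, using Corollary~\ref{cor:fp}, that proper discontinuity forces every non-isometric $\gamma'\in\Gamma$ to have $\epsilon_{\gamma'}=1$ and $c_{\gamma'}\neq 0$; in particular the given strict homothety $\gamma$ has $\epsilon_\gamma=1$ and $c_\gamma\neq 0$, and its iterates $\gamma^k$ act on the $t$-coordinate by $t\mapsto t+kc_\gamma$.

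The heart of the argument is an estimate on how $\gamma^k$ moves points. Writing $\gamma=\phi_{c_\gamma,1,b,\beta,A_\gamma,s_\gamma}$ as in \eqref{homoth1}, one computes the composition $\gamma^k$ explicitly: the $t$-component is $t+kc_\gamma$, the $\veccy x$-component is $\e^{ks_\gamma}A_\gamma^k\veccy x$ plus a sum of translation terms built from $\beta$ evaluated along the arithmetic progression $t, t+c_\gamma,\dots,t+(k-1)c_\gamma$ and rescaled by powers of $\e^{s_\gamma}$, and the $v$-component scales like $\e^{2ks_\gamma}$ plus lower-order terms. Decomposing $\R^n$ into the eigenspaces of $S$ and focusing on an eigenspace with eigenvalue $\lambda_i^2\in\Sigma_+$, the relevant solutions $\beta$ in that block grow like $\e^{\pm\lambda_i |t|}$. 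The condition \eqref{lcs}, $s_\gamma^2-\lambda_i^2 c_\gamma^2>0$, is precisely what guarantees that $|s_\gamma|>\lambda_i|c_\gamma|$, so that along the progression $t+jc_\gamma$ the exponential rescaling factor $\e^{(k-1-j)s_\gamma}$ dominates the growth $\e^{\lambda_i(t+jc_\gamma)}$ of $\beta$ — meaning the translational contributions form a convergent-type geometric series (after normalising by $\e^{ks_\gamma}$) and do not overwhelm the leading term. I would use this to show that, after dividing by $\e^{ks_\gamma}$ (resp.\ $\e^{2ks_\gamma}$ in the $v$-slot), the rescaled image $\e^{-ks_\gamma}\cdot(\text{$\veccy x$-part of }\gamma^k(p))$ converges, as $k\to\infty$, to a fixed point $q(p)$ depending continuously on $p$; equivalently $\gamma^k$ has an ``attracting-type'' behaviour in suitable coordinates.

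From here the contradiction is extracted as follows. Choose a point $p$ with $q(p)\neq 0$ (the set where $q$ vanishes is a proper affine subspace, so such $p$ exists; if $s_\gamma<0$ one runs the same argument with $\gamma^{-1}$). Then the points $\gamma^k(p)$ have $t$-coordinate marching off to $\pm\infty$, but once we translate back by the isometric part of some suitable element of $\Gamma$ that returns the $t$-coordinate to a bounded window — which exists because $\Gamma$ acts cocompactly and hence the $t$-coordinates of $\Gamma$-orbits are ``coarsely dense'' modulo the image of $\Gamma$ in the $t$-translations — the $\veccy x$-coordinates still blow up like $\e^{ks_\gamma}|q(p)|$ (or collapse to a tiny neighbourhood of a single point if $s_\gamma<0$). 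Either way one produces, inside the compact set $K$, infinitely many distinct points $\delta_k(p)$ with $\delta_k\in\Gamma$ pairwise distinct, accumulating somewhere in $K$; this violates (PD1). The main obstacle I anticipate is the bookkeeping in the second step: controlling the translational sums uniformly requires splitting $V_S$ into the $S$-eigenspaces, handling separately the eigenvalues in $\Sigma_+$ (where the inequality \eqref{lcs} is used), the eigenvalue $0$ if present, and the negative eigenvalues (where $\beta$ is bounded and causes no trouble), and then checking that the rescaled limit $q$ genuinely depends non-trivially on $p$ — i.e.\ that the attracting subspace is not all of $\R^{n+2}$. The interplay with cocompactness in the final step (replacing $\gamma^k$ by a modified element that keeps $t$ bounded while preserving the growth in $\veccy x$) also needs care, and is where I would lean on the finitely-self-adjacent-fundamental-domain machinery of \cite{FSApaper,stuart-thesis} and Lemma~\ref{FSAlemma} rather than arguing by hand.
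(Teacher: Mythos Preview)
There is a genuine gap, stemming from a misreading of the hypothesis. The element $\gamma$ is assumed to lie in $\R\times\cent_{\O(n)}(S)\times\R$, i.e.\ its Heisenberg part is trivial: $\beta_\gamma=0$ and $b_\gamma=0$. Consequently $\gamma^k$ acts simply as $(t,\veccy x,v)\mapsto(t+kc_\gamma,\e^{ks_\gamma}A_\gamma^k\veccy x,\e^{2ks_\gamma}v)$, and all of your analysis of the ``translational contributions built from $\beta$ evaluated along the arithmetic progression'' is vacuous. In particular, condition~\eqref{lcs} plays no role whatsoever in controlling the iterates of $\gamma$ by itself, so the place where you invoke it is empty. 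Your final step --- using cocompactness to find some $\eta_k\in\Gamma$ that brings the $t$-coordinate back while the $\veccy x$-coordinate ``still blows up'' --- is where the real content would have to be, but it is left completely unspecified: the element $\eta_k$ will in general have its own scaling factor, its own $\beta_{\eta_k}$, and its own $b_{\eta_k}$, and you give no mechanism for controlling these. Without that, the argument cannot be completed; indeed, the analogous na\"ive scheme fails for ordinary lattices in $\R^n$.

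The paper's proof repairs this by introducing the second element explicitly and bypassing the vague ``translate back via cocompactness'' step entirely. Cocompactness is used only once, via Lemma~\ref{lem_cyclic-isnt-cocompact}, to produce some $\phi\in\Gamma\setminus\langle\gamma\rangle$; one may assume $\phi$ is a strict homothety, hence $\epsilon_\phi=1$, $c_\phi\neq 0$, and (after a short argument) $b_\phi$ or $\beta_\phi$ is nonzero. The key object is the conjugation sequence $y_k=\gamma^{-k}\phi\gamma^k(0)$. Because $\gamma$ has trivial Heisenberg part, this is easily computed: the $t$-coordinate is the constant $c_\phi$, the $\veccy x$-coordinate is $\e^{-ks_\gamma}(A_\gamma^\top)^k\beta_\phi(kc_\gamma)$, and the $v$-coordinate is $\e^{-2ks_\gamma}\bigl(b_\phi-\tfrac12\langle\dot\beta_\phi(kc_\gamma),\beta_\phi(kc_\gamma)\rangle\bigr)$. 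Now condition~\eqref{lcs} is exactly what forces $\e^{-ks_\gamma}\beta_\phi(kc_\gamma)\to 0$ on each $S$-eigenspace (the positive eigenvalues are the only obstruction, and $|s_\gamma|>\lambda_i|c_\gamma|$ handles them), so $y_k\to(c_\phi,0,0)$. Since $b_\phi$ or $\beta_\phi$ is nonzero the sequence is nonconstant, and one obtains infinitely many orbit points of $0$ accumulating at $(c_\phi,0,0)$, contradicting proper discontinuity. The moral: \eqref{lcs} compares the scaling of $\gamma$ against the growth of $\beta_\phi$ for the \emph{other} element, not against anything intrinsic to $\gamma$.
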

		\begin{proof} Since $\Gamma$ acts cocompactly, by Lemma~\ref{lem_cyclic-isnt-cocompact} it cannot be cyclic, so there is a homothety  $\phi\in \Gamma\setminus \<\gamma\>$, which we fix. 
		Without loss of generality, we can assume that $\phi$ is a strict homothety, otherwise we multiply $\phi$ by $\gamma$. 
		Let $\phi=\phi_{c,\epsilon,b,\beta,A,s}$ be a homothety as in (\ref{homoth1}).
			For a proof by contradiction,  assume that $\Gamma$ acts properly discontinuously. 
This implies that  $\epsilon=1$ and $c\not=0$, as otherwise, 
		by Proposition~\ref{prop_homothetic-fixed-points}, 
 $\phi$ would have a fixed point and $\Gamma$ could not act properly discontinuous. 

Furthermore, if $\beta$, $b$ are both zero, then for any sequence of rational numbers $p_i/q_i\to c/c_\gamma$, we have that $\gamma^{p_i}\phi^{-q_i}(0)\to 0$, contradicting PD1. Hence at least one of  $b$ or $\beta$ is non-zero.

The assumption that
\[ 0<s^2_\gamma -\lambda^2_ic^2_\gamma =(s_\gamma+\lambda_i c_\gamma)(s_\gamma-\lambda_i c_\gamma)\]
means that $(s_\gamma+\lambda c_\gamma)$ and $(s_\gamma-\lambda c_\gamma)$ have the same sign. Without loss of generality, we assume that both are positive (if both are negative, we use $\gamma^{-1}$ in what follows). 
			Now consider the sequence
			\[y_k=\gamma^{-k}\phi\gamma^k(0) = \begin{pmatrix}c  \\[1mm]\e^{-ks_\gamma}(A^\top_\gamma)^k \beta(kc_\gamma)\\[1mm] \e^{-2ks_\gamma}\left(b-\<\dot\beta(kc_\gamma), \frac12\beta(kc_\gamma)\>\right)\end{pmatrix}.
			\]
%			{\blu Corrected from 
%			$$\gamma^{-i}\phi\gamma^i(0) = \begin{pmatrix}c\\e^{-is}\beta(ic)\\ \e^{-2is}(-\<\dot\beta(ic), \frac12\beta(ic)\>)\end{pmatrix}.$$}
Let $\beta^i:\R\to\R$, $i=1, \ldots , n$ be the components of $\beta\in V_S$.
	For those $i$ for which the eigenvalues of $S$ are negative, the sequence $\beta^j(kc_\gamma)$ is bounded and hence 	$\e^{-ks_\gamma}	\beta^j(kc_\gamma)$ converges to zero when $k$ goes to infinity.
	
	For those $j$ that correspond to the kernel of $S$, $\beta^i$ is linear and again $\e^{-ks_\gamma}	\beta^i(kc_\gamma)$ converges to zero when $k$ goes to infinity.

For those $i$ for which the eigenvalues $\lambda_i^2$ of $S$ is positive we have
\begin{equation}\label{hyps}
\beta^i(kc_\gamma)=b^i\cosh(\lambda_i kc_\gamma )+ c^i\sinh (\lambda_i k c_\gamma)= \tfrac{b^i+c^i}{2}\e^{\lambda_i kc_\gamma}+ \tfrac{b^i-c^i}{2} \e^{-\lambda_i kc_\gamma},\end{equation}
for some constants $b^i$ and $c^i$, not both zero.
By the assumption that both $(s_\gamma+\lambda_i c_\gamma)$ and $(s_\gamma-\lambda_i c_\gamma)$ are positive, we have that 
\[
\e^{-ks_\gamma} \beta^i(kc_\gamma)= \tfrac{b^i+c^i}{2}\e^{-k(s_\gamma- \lambda_i c_\gamma)}+ \tfrac{b^i-c^i}{2} \e^{-k(s_\gamma+\lambda_i c_\gamma)}\]
(no summation over $i$), is a non-constant sequence that converges to zero when $k\to\infty$. Since $A_\gamma$ is an orthogonal matrix this implies that $y_k^i$ is a non-constant sequence converging to zero for $k\to \infty$.

Finally for  $v_k=v(y_k)=\e^{-2ks_\gamma}(b-\<\dot\beta(kc_\gamma), \frac12\beta(kc_\gamma)\>$, using the formula (\ref{hyps}) for the $\beta^i$s, we have that \[\lim_{k\to\infty} v_k =\frac14 \sum_{i=1}^p \lim_{k\to \infty} \left( \e^{-2k(s_\gamma-\lambda_ic_\gamma)}- \e^{-2k(s_\gamma+\lambda_ic_\gamma)}\right)=0,\]
since both $(s_\gamma\pm\lambda c_\gamma)$  are positive.

Hence, $y_k=\gamma^{-k}\phi\gamma^k(0)$ is a non-constant sequence  in the $\Gamma$-orbit of $0$ that converges to $(c ,0,0)$. This contradicts the assumption that $\Gamma$ acts properly discontinuously.
		\end{proof}
\begin{remark}
	The assumption of  Proposition~\ref{prop_good-homoth-quotient-implies-exponential-beta} can be formulated as 
\[\tfrac{s^2}{c^2}> \lambda_{\max}^2,
\]
where $\lambda_{\max}^2$ is the largest positive eigenvalue of $S$.
Note  that we have not assumed that $S$ is non-degenerate. Also, this condition is invariant in the isometry class of $g_{S}$. Indeed, for $a\in R_{>0}$ and $\alpha(t,x,v)=(at,x, a^{-1}v)$, we get that $\alpha^*g_S=g_{a^2 S}$ and the homotheties of $g_{a^2S}$ are given by conjugation with $\alpha$, i.e. by $\hat \gamma= \alpha^{-1}\circ\gamma\circ\alpha$. Then $\hat c$ of $\hat\gamma$ is given by $\frac{c_\gamma}{a}$, $s$ is not changed, and the largest eigenvalue of $a^2S$ is $a^2\lambda_{\max}$, so that 
\[\left(\frac{\hat s}{\hat c}\right)^2= a^2\left(\frac{ s}{ c}\right)^2>a^2\lambda_{\max}.\]

The proof also shows that if $\Gamma$ contains a $\gamma=(c_\gamma,A_\gamma,s_\gamma)\in \R\times C_{\O(n)}(S)\times \R$
and acts cocompactly and properly discontinuously, then  for all  $\phi\in \Gamma\setminus \<\gamma\>$ the corresponding  $\beta$ must have at least one exponentially growing component, that is, $\beta$ must have a non-vanishing component in an eigenspace for a positive eigenvalue.
\end{remark}

%\subsection{Conformal compact quotients of imaginary type}
%Using the results of the previous subsection, here we show that conformal compact quotients of Cahen-Wallach spaces of imaginary type must be isometric quotients. First we show that under certain conditions, every strict homothety is conjugated to a homothety $\gamma\in \R\times C_{\O(n)}(S)\times \R$.
%
%
%{\red 
%For $\lambda^2\in \Sigma_+$ we define
%\[G^\pm_\lambda=\{ \phi \in H_+\mid s_\phi=\pm \lambda c_\phi\}.\]
%From the group law in $\H$ it it obvious that each $G^\pm_\lambda$ is a subgroup and it is of codimension $2$ in $H_+$ and that $G_\lambda\cap G_\mu\subset \Hei_n\rtimes (\Z_2 \times K) \subset \Iso(M,g)$ \Tcom{Check codim} for two different positive eigenvalues $\lambda^2$ and $\mu^2$.
%}

As the next step we show that under certain conditions, every strict homothety is conjugated to a homothety $\gamma\in \R\times C_{\O(n)}(S)\times \R$.
This requires the following technical result.

\begin{lemma}\label{lem_beta-solving-conjugation-equation}
			Let $\hat\beta\in V_S$, $A\in C_{\O(n)}(S)$, $s\in\R$, $s\not=0$, and $ c\in \R$. Unless $(\frac{s}{c})^{2}$ is  an eigenvalue of $S$, 
			%\Tcom{Change: $s\not=\lambda c$! Deal with $c=0$.}
 there is a $\beta\in V_S$ such that
			\begin{equation}\label{betaeq}
 \e^{ s}  A \,\beta\circ \sigma_{c}-\beta =\hat\beta,
\end{equation}
where $\sigma_c=E_{c,1}\in\E(1)$ is the shift by $c$ in $\R$, $\sigma_c(t)=t+c$.
		\end{lemma}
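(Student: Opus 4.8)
The equation to solve is $\e^s A\,\beta\circ\sigma_c-\beta=\hat\beta$, an equation in the $2n$-dimensional symplectic vector space $V_S$ on which the shift $\sigma_c$ acts linearly. The natural approach is to write everything in terms of a single linear operator on $V_S$: let $T\colon V_S\to V_S$ be defined by $T\beta:=\e^s A\,(\beta\circ\sigma_c)$. Since $A$ commutes with $S$ and $\sigma_c$ preserves $V_S$, the map $T$ is a well-defined linear automorphism of $V_S$, and the equation becomes $(T-\Id)\beta=\hat\beta$. Thus the lemma reduces to showing that $T-\Id$ is invertible, i.e.\ that $1$ is not an eigenvalue of $T$, under the hypothesis that $(s/c)^2\notin\Sigma$.

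The key step is therefore to compute the spectrum of $T$. I would diagonalise $S$ orthogonally: decompose $V_S=\bigoplus_\mu V_S^\mu$ according to the eigenvalues $\mu$ of $S$, where on the $\mu$-eigenspace the ODE $\ddot\beta=\mu\beta$ has the explicit solution basis $\{\cosh(\sqrt\mu\,t),\sinh(\sqrt\mu\,t)\}$ when $\mu>0$, $\{\cos(\sqrt{-\mu}\,t),\sin(\sqrt{-\mu}\,t)\}$ when $\mu<0$, and $\{1,t\}$ when $\mu=0$. Because $A\in C_{\O(n)}(S)$ commutes with $S$, it preserves each eigenspace of $S$ in $\R^n$, hence $T$ preserves each $V_S^\mu$, and on $V_S^\mu$ the shift operator $\beta\mapsto\beta\circ\sigma_c$ has eigenvalues $\e^{\pm\sqrt\mu\,c}$ (for $\mu>0$), $\e^{\pm i\sqrt{-\mu}\,c}$ (for $\mu<0$), or is unipotent with single eigenvalue $1$ (for $\mu=0$). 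Combining with the eigenvalues of $A$ on that eigenspace (which are unimodular, say $\e^{i\theta}$), the eigenvalues of $T$ on $V_S^\mu$ have modulus $\e^s\cdot|\e^{\pm\sqrt\mu\,c}|$ in the $\mu>0$ case, $\e^s$ in the $\mu<0$ case, and $\e^s$ in the $\mu=0$ case (here one must also handle the Jordan block for $\mu=0$ — $T$ restricted there has the form $\e^s$ times a unipotent times an orthogonal block, still with all eigenvalues of modulus $\e^s\neq1$ since $s\neq0$). So for $\mu\le0$ the eigenvalues of $T$ have modulus $\e^s\neq1$ and $1$ is automatically excluded; for $\mu>0$ the eigenvalue $1$ can only occur if $\e^s\e^{\pm\sqrt\mu\,c}\e^{i\theta}=1$ for some eigenvalue $\e^{i\theta}$ of $A$, which forces $\theta=0$ and $s=\mp\sqrt\mu\,c$, i.e.\ $s^2=\mu c^2$, i.e.\ $(s/c)^2=\mu\in\Sigma$ — precisely the excluded case. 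Hence under the hypothesis $T-\Id$ is invertible and $\beta=(T-\Id)^{-1}\hat\beta$ is the desired solution.

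The main obstacle I anticipate is the bookkeeping in the $\mu>0$ eigenspaces: one has to be careful that $A$ and $\sigma_c$ act simultaneously, so the relevant eigenvalues of $T$ are products $\e^s\cdot(\text{eigenvalue of }A\text{-block})\cdot(\text{eigenvalue of shift-block})$, and to exclude $1$ one needs that a product of $\e^s$ (real, $\ne1$ unless $s=0$), a unimodular complex number, and a positive real $\e^{\pm\sqrt\mu c}$ equals $1$ only in the degenerate alignment $s=\mp\sqrt\mu c$. A clean way to organise this is to pass to the complexification $V_S\otimes\C$, write $T$ in block form adapted to the joint $S$-eigenspace / $A$-eigenspace decomposition, and on each block reduce to a $1\times1$ or $2\times2$ computation; then invertibility of $T-\Id$ is just non-vanishing of the relevant determinants, which is a short calculation. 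An alternative, essentially equivalent, route avoids spectral theory entirely: seek $\beta$ directly component-by-component in each $S$-eigenspace by reducing to the scalar functional equations $\e^s a\,f(t+c)-f(t)=\hat f(t)$ with $f$ ranging over the two-dimensional solution space, and solve the resulting linear system for the coefficients of $f$ — the determinant of that $2\times2$ system is nonzero exactly when $(s/c)^2\ne\mu$. I would present the operator-theoretic version as the main argument since it is cleaner, and note the explicit formula $\beta=(T-\Id)^{-1}\hat\beta$ at the end.
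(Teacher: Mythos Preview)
Your proposal is correct and follows essentially the same strategy as the paper: decompose $V_S$ according to the eigenspaces of $S$ (which $A$ and the shift both preserve), and on each block show that the relevant linear operator has no eigenvalue equal to $1$. The paper carries this out via precisely the route you call the ``alternative, essentially equivalent'' one---writing $\beta$ in initial-condition coordinates $(\beta(0),\dot\beta(0))$ on each eigenspace, forming the explicit $2d_\mu\times 2d_\mu$ block matrix, and computing its determinant as $\prod_i(\e^{2s}z_i^2-2\e^s\cosh(\lambda c)z_i+1)$ (respectively with $\cos$) over the eigenvalues $z_i$ of $A$---rather than the abstract spectral formulation $T=\e^sA\circ\sigma_c^*$ you prefer; the two are the same computation in different clothing.
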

		\begin{proof}
		Since $A$ commutes with $S$, it preserves the eigenspaces of $S$ and it suffices to determine $\beta$ on each eigenspace separately. We abuse notation by denoting by $\beta$ and $\hat \beta$ their component on each  eigenspace and by $n$ the dimension of an eigenspace.
		Since $\beta\in V_S$ and $\hat\beta\in V_S$, both are determined by their initial values $\beta_0:=\beta(0)$, $\hat\beta_0:=\hat\beta(0)$ and  derivatives $\beta_1:=\beta'(0)$ and $\hat\beta_1:=\hat\beta'(0)$ we have to show that we can choose $\beta_0$ and $\beta_1$ such that the corresponding solution $\beta$ satisfies (\ref{betaeq}).   First we consider the case of a negative eigenvalue $-\mu^2$. A solution $\beta $ is given as 
		\[
		\beta(t)=\beta_0\cos(\mu t)+\tfrac{\beta_1}{\mu}\sin (\mu t).
		%,\quad 		\hat\beta(t)=\hat\beta_0\cos(\mu t)+\frac{\hat\beta_1}{\mu}\sin (\mu t).
		\]
Hence we are looking for initial condition $\beta_0$ and $\beta_1$ satisfying the linear system of $2n$ equations
\begin{equation}\label{linsystrig}
\underbrace{\begin{pmatrix}
\mu ( \e^s \cos(\mu c) A -\1) &  \e^s \sin(\mu c)  A
\\[1mm]
-\mu \e^s  \sin(\mu c) A &  \e^s \cos(\mu c) A -\1
\end{pmatrix}}_{=:M}
\begin{pmatrix}\beta_0\\
\beta_1\end{pmatrix}
=
\begin{pmatrix}\mu\hat\beta_0\\
\hat\beta_1\end{pmatrix}
\end{equation}
This system has a solution for any given right-hand-side if the matrix $M$ is invertible.
			Since the bottom two blocks commute, we get the determinant 
\[
				\det(M) 
				= \mu^n \det\Big(\e^{2s}A^2-2 \e^s\cos(\mu c)A+\1 \Big),
\]
see \cite{Silvester00} for details.
			We know that $A$ is orthogonal and hence is diagonalisable over $\C$ with complex eigenvalues $z_1,\ldots , z_n$ which lie on the unit circle, $|z_i|=1$.  So we compute this determinant by  diagonalisation to obtain
			\begin{equation*}
				\det(M) = \prod_{i=1}^n\left( \e^{2s}z_i^2-2\e^s\cos(\mu c)z_i + 1\right).
			\end{equation*}
			Hence, the determinant of $M$ can only be zero if one of the $z_i$'s is a root of 
\[ \e^{2s}z^2-2\e^s\cos(\mu c)z+1.\]
However, the roots of this quadratic polynomial are given by the complex numbers $z=\e^{-s\pm \mathrm{i}\mu c}$, which do not lie on the unit circle as $s\not=0$. Hence $M$ is invertible and by inverting it we find suitable initial conditions $\beta_0$ and $\beta_1$ so that $\beta$ solves (\ref{betaeq}).

Secondly, we consider the case of a positive eigenvalue $\lambda^2$. Now a solution $\beta $ is given as 
		\[
		\beta(t)=\beta_0\cosh(\lambda t)+\tfrac{\beta_1}{\lambda}\sinh (\lambda t).
		%,\quad 		\hat\beta(t)=\hat\beta_0\cos(\lambda t)+\frac{\hat\beta_1}{\lambda}\sin (\lambda t).
		\]
		The corresponding linear system for $\beta_0$ and $\beta_1$ that replaces (\ref{linsystrig}) now is given by the matrix
\[
M=
\begin{pmatrix}
\lambda ( \e^s \cosh(\lambda c) A -\1) &  \e^s \sinh(\lambda c)  A
\\[1mm]
\lambda \e^s  \sinh(\lambda c) A &  \e^s \cosh(\lambda c) A -\1
\end{pmatrix}
%\begin{pmatrix}\beta_0\\
%\beta_1\end{pmatrix}
%=
%\begin{pmatrix}\lambda\hat\beta_0\\
%\hat\beta_1\end{pmatrix}
\]
with determinant 
\[
				\det(M) 
				= \lambda^n \det\Big(\e^{2s}A^2-2 \e^s\cosh(\lambda c)A+\1 \Big) = \prod_{i=1}^n\left( \e^{2s}z_i^2-2\e^s\cosh(\lambda c)z_i + 1\right).
\]
Again, the determinant of $M$ can only be zero if one of the $z_i$'s is a root of 
\[ \e^{2s}z^2-2\e^s\cosh(\lambda c)z+1.\]
Now the roots are  real numbers $z=\e^{-s\pm \lambda c}$ which do not lie on the unit circle {\em unless} $s=\pm \lambda c$. This however was excluded in the assumption and so  $M$ is invertible and yields a solution  $\beta$ to (\ref{betaeq}).

Finally, in the case of eigenvalue zero, the solution are affine,
$\beta(t)=\beta_1 t+\beta_0$,
so that (\ref{betaeq}) is equivalent to
\[(\e^sA  -\1)\beta_1=\hat\beta_1, \quad (\e^sA  -\1)\beta_0=\hat\beta_0-c\e^sA\beta_1.\]
Inverting $(\e^sA  -\1)$ gives the result also in this case.
\end{proof}

\begin{proposition}\label{prop_conjugating-good-homotheties}
			Let $\hat\phi\in H_S$ be a strict homothety such that $\epsilon_{\hat\phi}=1$. Unless 
			 $\left(\frac{s_{\hat\phi}}{c_{\hat\phi}}\right)^2$  is  an eigenvalue of $S$, the homothety
			 $\hat\phi$ is conjugate by an isometry from $\Hei_n\rtimes \Z_2$ to a strict homothety $\phi  
			 \in \E(1)\times  C_{\O(n)}(S)\times \R$ with $\epsilon_\phi=1$ and $c_\phi\ge 0$.
%			 
%			 $ such that $\beta_{\phi}\equiv 0$, $b_{\phi}=0$, and $c_{\phi}\geq 0$, that is, $\phi\in \R_{\ge 0} \times C_{\O(n)}(S)\times \R$.
		\end{proposition}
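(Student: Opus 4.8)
The plan is to write $\hat\phi$ explicitly in the form (\ref{homoth1}), so that $\hat\phi=\hat\phi_{\hat c,1,\hat b,\hat\beta,\hat A,\hat s}$ with $\hat c=c_{\hat\phi}\ge 0$ (after possibly replacing $\hat\phi$ by $\hat\phi^{-1}$, which only flips the sign of $\hat c$ and $\hat s$ but does not change whether $(\hat s/\hat c)^2\in\Sigma$) and $\hat s=s_{\hat\phi}\not=0$. Under the identification of $H_S$ with $\Hei_n\rtimes_\vf(\E(1)\times\cent_{\O(n)}(S)\times\R)$, the homothety $\hat\phi$ is represented by a pair $\big((\hat b,\hat\beta),(E_{\hat c,1},\hat A,\hat s)\big)$, and $\phi:=(E_{\hat c,1},\hat A,\hat s)\in\E(1)\times\cent_{\O(n)}(S)\times\R$ is the strict homothety with $\epsilon_\phi=1$ and $c_\phi=\hat c\ge0$ that we want to conjugate $\hat\phi$ into. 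So it suffices to find an isometry $\psi\in\Hei_n\rtimes\Z_2$ — in fact I will look for $\psi=(b,\beta)\in\Hei_n$, i.e. $\psi=\psi_{0,1,b,\beta,\1}$ — such that $\psi^{-1}\hat\phi\,\psi=\phi$, equivalently $\hat\phi\,\psi=\psi\,\phi$.

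The key computation is to expand both sides of $\hat\phi\circ\psi=\psi\circ\phi$ as maps of $\R^{n+2}$ using formula (\ref{homoth1})/(\ref{isom}), or equivalently to use the group product (\ref{product}) together with the conjugation rule $\vf_{(c,\epsilon,A,s)}$ from Proposition~\ref{homoprop}. Composing, the $\Hei_n$-parts must match: writing $\sigma_{\hat c}=E_{\hat c,1}$, the condition becomes (up to the central $\R$-component of $\Hei_n$, which I treat separately)
\[
\hat\beta + \e^{\hat s}\hat A\,\beta\circ\sigma_{\hat c}^{-1} \;=\; \beta,
\]
that is, $\beta$ must satisfy $\beta - \e^{\hat s}\hat A\,\beta\circ\sigma_{\hat c}^{-1} = \hat\beta$. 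Replacing $\beta$ by $\beta\circ\sigma_{\hat c}$ (still an element of $V_S$, since $V_S$ is $E(1)$-invariant) turns this into exactly equation (\ref{betaeq}) of Lemma~\ref{lem_beta-solving-conjugation-equation} with data $\hat A$, $\hat s$, $\hat c$, whose solvability is guaranteed precisely by the hypothesis that $(\hat s/\hat c)^2$ is not an eigenvalue of $S$. (If $\hat c=0$ the equation degenerates to $(\e^{\hat s}\hat A-\1)\beta=\hat\beta$, solvable since $\e^{\hat s}$ is not an eigenvalue of the orthogonal matrix $\hat A$ as $\hat s\not=0$; this is the case covered at the end of the proof of Lemma~\ref{lem_beta-solving-conjugation-equation} with eigenvalue considerations, or can be absorbed.) Having fixed such a $\beta$, the remaining scalar equation coming from the central $\R\subset\Hei_n$ reads $(\e^{2\hat s}-1)\,b = (\text{expression in }\hat b,\beta,\hat\beta,\omega)$, and since $\hat s\not=0$ the factor $\e^{2\hat s}-1$ is invertible, so $b$ is determined. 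This produces the desired $\psi=(b,\beta)\in\Hei_n$ with $\psi^{-1}\hat\phi\,\psi=\phi$.

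Finally I should check that $\phi$ is a \emph{strict} homothety with $\epsilon_\phi=1$ and $c_\phi\ge0$: strictness is immediate since conjugation preserves $s_\phi=\hat s\not=0$; $\epsilon_\phi=\epsilon_{\hat\phi}=1$ since conjugation by an element of $\Hei_n$ does not change the $\E(1)$-component; and $c_\phi\ge0$ was arranged at the outset by the replacement $\hat\phi\rightsquigarrow\hat\phi^{-1}$ (noting that passing to $\hat\phi^{-1}$ leaves the isometry-vs-homothety distinction and the condition $(\hat s/\hat c)^2\notin\Sigma$ unchanged, and that $c_\phi=0$ is allowed). The main obstacle is purely bookkeeping: carrying the central $\R$-component of $\Hei_n$ correctly through the twisted product (\ref{product}) — in particular tracking the symplectic cocycle $\omega$ and the $E_{-\epsilon c,\epsilon}$-shifts — so that the reduction to Lemma~\ref{lem_beta-solving-conjugation-equation} is clean; the genuinely substantive input, namely the invertibility of the relevant operator on $V_S$, has already been isolated in that lemma.
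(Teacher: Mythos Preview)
Your approach is the same as the paper's: conjugate by an element $\psi=(b,\beta)\in\Hei_n$ to kill the $\Hei_n$-component of $\hat\phi$, invoking Lemma~\ref{lem_beta-solving-conjugation-equation} to solve for $\beta$ and then solving a linear equation for $b$ using $\e^{2\hat s}\neq1$.

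There is one genuine (though easily repaired) slip: your device of replacing $\hat\phi$ by $\hat\phi^{-1}$ at the outset does \emph{not} arrange $c_\phi\ge0$. Conjugation by $\psi\in\Hei_n$ leaves the $\E(1)$-component untouched, so $c_{\psi^{-1}\hat\phi\psi}=c_{\hat\phi}$ regardless. Concretely, if $c_{\hat\phi}<0$ and you pass to $\hat\phi^{-1}$ and find $\psi$ with $\psi^{-1}\hat\phi^{-1}\psi=\phi'$ and $c_{\phi'}=-c_{\hat\phi}>0$, then the \emph{same} $\psi$ gives $\psi^{-1}\hat\phi\psi=(\phi')^{-1}$, which does lie in $\E(1)\times C_{\O(n)}(S)\times\R$ but has $c_{(\phi')^{-1}}=c_{\hat\phi}<0$. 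The paper handles this by first conjugating by $\psi\in\Hei_n$ and \emph{then} conjugating by $-1\in\Z_2\subset\E(1)$ at the end, which sends $c$ to $-c$; this is exactly why the conjugating isometry is taken from $\Hei_n\rtimes\Z_2$ rather than just $\Hei_n$.

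One minor remark: your substitution $\beta\mapsto\beta\circ\sigma_{\hat c}$ is unnecessary and, as written, does not land on (\ref{betaeq}). Your equation $\beta-\e^{\hat s}\hat A\,\beta\circ\sigma_{\hat c}^{-1}=\hat\beta$ is already of the form treated in Lemma~\ref{lem_beta-solving-conjugation-equation} (with $c=-\hat c$ and right-hand side $-\hat\beta$), so the Lemma applies directly.
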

		\begin{proof}
		Let $\hat\phi$ be a strict  homothety with $\epsilon_{\hat\phi}=1$. Then $\hat\phi=\hat\psi \circ h_{\hat{s}}$ with an isometry $\hat \psi=(\hat b,\hat \beta)\circ (\hat c,1,\hat A)$ with $(\hat b,\hat \beta)\in \Hei_n$ and $(\hat c,1,\hat A)\in \E(1)\times \cent_{\O(n)}(S)$, and $h_{\hat{s}}$ a pure homothety with $\hat s=s_{\hat\phi}\not=0$. 
		First, we are searching  for an isometry $\psi=(b,\beta)\in\Hei_n$ such that
		\begin{equation}\label{wanted}
		\psi\hat\phi\psi^{-1}= 	
				\psi\hat\psi h_{\hat s}\psi^{-1}	\in \E(1)\times C_{\O(n)}(S)\times \R.
				%\quad\text{with $\epsilon_{\psi\hat\phi\psi^{-1}}=1$.}
				\end{equation}
Note that, since $\epsilon_{\hat \phi}=1$ we automatically have that
$\epsilon_{\psi\hat\phi\psi^{-1}}=1$.				
Denoting by \begin{equation}
\label{fis}
\vf_s:=\vf(0,1,\1,s)\end{equation}
with $\vf:\E(1)\times \cent_{\O(n)}(S)\times \R\to \Aut(\Hei_n)$ from Proposition~\ref{homoprop} and using 
				using~\ref{hprod}), we have
\[
		\psi\hat\phi\psi^{-1}= 	
				\psi\hat\psi h_{\hat s}\psi^{-1}	=\psi\hat\psi\vf_{\hat s}(\psi^{-1}) h_{\hat{s}}.\]
Hence condition (\ref{wanted} ) is equivalent to
		\[\psi\hat\psi\vf_{\hat{s}}(\psi^{-1}) \in \E(1)\times C_{\O(n)}(S).\]	
Now we compute using the formulas (\ref{product}), (\ref{inverse}) and (\ref{hprod}),
\begin{eqnarray*}
%\lefteqn{
\psi\hat\psi\vf_{\hat{s}}(\psi^{-1})
%}
%\\
&=&
 \left( \left( (b+ \hat b + 
 \omega(\beta, \hat \beta) , \beta+ \hat \beta\right) ,( 
  \hat c,1 , \hat A)\right)
  (- b\e^{2\hat s} ,-\e^{\hat{s}} \beta)
 \\
 &=&
 \left( b+\hat b- \e^{2\hat s}b +\omega(\beta,\hat\beta) -\e^{\hat s}\omega \left(\beta+\hat\beta,\hat A \,\beta\circ \sigma_{-\hat c} \right) , \beta +\hat\beta- \e^{\hat s} \hat A \beta\circ \sigma_{-\hat c} \right) \left(   \hat c, 
1, \hat A\right) ,
\end{eqnarray*}
where $\sigma_{-\hat c} (t)=t-\hat c$ denotes the shift by $-\hat c$. By Lemma~\ref{lem_beta-solving-conjugation-equation}, there is a solution $\beta\in V_S$ to
\[
 \e^{\hat s} \hat A\, \beta\circ \sigma_{-\hat c}-\beta =\hat\beta.
\]
Given this $\beta$, we can solve for $b$ such that the first entry in the above display is zero, i.e.~such that 
$\psi\hat\psi\vf_{\hat{s}}(\psi^{-1})\in\E(1)\times C_{\O(n)}(S)$. 
Finally, we can conjugate 
$\psi\hat\psi\vf_{\hat{s}}(\psi^{-1})$ to the required $\phi$  by $\epsilon\in \Z_2$ to achieve that $c_\phi=\epsilon \hat c\ge 0$.
		\end{proof}

This leads to the following result:

\begin{theorem}\label{thm_no-homoth-quotients-of-imaginary-CW}
Let $(M,g_S)$ be a Cahen-Wallach space 
and $\Gamma $ be a group of homotheties   acting properly discontinuously and cocompactly. Then $\Gamma$ is contained in the isometries, unless  $S$ has at least one positive eigenvalue and all elements in $\Gamma\setminus \Iso$ satisfy 
\begin{equation}
\label{bad} \left(\frac{s_\gamma}{c_\gamma}\right)^2 \le  \lambda_{\max}^2,\end{equation}
where $\lambda^2_{\max}$ is the largest positive eigenvalue.

In particular, $\Gamma$ is contained in the isometries if $(M,g_S)$ is of imaginary type.
		\end{theorem}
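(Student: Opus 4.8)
The plan is to prove the contrapositive: assuming $\Gamma$ acts properly discontinuously and cocompactly but is \emph{not} contained in $\Iso(\R^{n+2},g_S)$, I will show that $S$ has a positive eigenvalue and that every element of $\Gamma\setminus\Iso$ satisfies (\ref{bad}). The two engines are Proposition~\ref{prop_conjugating-good-homotheties} (conjugating a suitable strict homothety into the ``good'' subgroup $\R\times\cent_{\O(n)}(S)\times\R$) and Proposition~\ref{prop_good-homoth-quotient-implies-exponential-beta} (which, for a cocompact $\Gamma$ containing such a good homothety satisfying (\ref{lcs}), rules out proper discontinuity). Corollary~\ref{cor:fp} supplies the normal form: each $\gamma\in\Gamma\setminus\Iso$ has $\epsilon_\gamma=1$, $c_\gamma\ne0$, and is strict, so $s_\gamma\ne0$; hence $r_\gamma:=(s_\gamma/c_\gamma)^2$ is a well-defined positive number.

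The heart of the argument is the claim that for every $\gamma\in\Gamma\setminus\Iso$ one has $\Sigma_+\ne\emptyset$ and $r_\gamma\le\lambda_{\max}^2$. Suppose this fails for some $\gamma$: then, whether $\Sigma_+=\emptyset$ or $\Sigma_+\ne\emptyset$ with $r_\gamma>\lambda_{\max}^2$, we obtain $r_\gamma>\lambda_i^2$ for all $\lambda_i^2\in\Sigma_+$ (vacuously in the first case), and since $r_\gamma>0$ while the remaining eigenvalues of $S$ are $\le0$, the number $r_\gamma$ is not an eigenvalue of $S$ at all. Thus Proposition~\ref{prop_conjugating-good-homotheties} applies: there is an isometry $\psi\in\Hei_n\rtimes\Z_2$ with $\gamma':=\psi\gamma\psi^{-1}\in\R\times\cent_{\O(n)}(S)\times\R$ and $\epsilon_{\gamma'}=1$. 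Because $\psi$ is an isometry and the $\Hei_n$-part of the conjugation fixes the $t$-coordinate while the $\Z_2$-flip only negates $c$, we have $s_{\gamma'}=s_\gamma$ and $c_{\gamma'}=\pm c_\gamma$; together with $c_{\gamma'}\ge0$ (from Proposition~\ref{prop_conjugating-good-homotheties}) and $c_\gamma\ne0$ this forces $c_{\gamma'}=|c_\gamma|>0$, hence $r_{\gamma'}=r_\gamma$ and $s_{\gamma'}^2-\lambda_i^2c_{\gamma'}^2=c_{\gamma'}^2(r_\gamma-\lambda_i^2)>0$ for all $\lambda_i^2\in\Sigma_+$, i.e.~$\gamma'$ satisfies (\ref{lcs}). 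Now $\Gamma':=\psi\Gamma\psi^{-1}$ still acts properly discontinuously and cocompactly --- conjugation by a diffeomorphism of $\R^{n+2}$ induces a homeomorphism of orbit spaces and transports the neighbourhoods in (PD1) and (PD2) --- and it contains the strict good homothety $\gamma'$ satisfying (\ref{lcs}). Proposition~\ref{prop_good-homoth-quotient-implies-exponential-beta} then forbids $\Gamma'$ from acting properly discontinuously, a contradiction.

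Hence the claim holds, and since $\gamma$ was arbitrary, $S$ has a positive eigenvalue and every $\gamma\in\Gamma\setminus\Iso$ obeys (\ref{bad}); this is precisely the contrapositive of the assertion. The final statement follows at once: for imaginary type $\Sigma=\Sigma_-$, so $\Sigma_+=\emptyset$, the exceptional alternative in the theorem is vacuous, and therefore $\Gamma\subseteq\Iso(\R^{n+2},g_S)$.

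The only points needing (routine) care are: that conjugation by $\psi\in\Hei_n\rtimes\Z_2$ preserves the invariants $s$ and $|c|$ of a homothety, which is immediate from the semidirect-product description $H_S=\Hei_n\rtimes_\vf(\E(1)\times\cent_{\O(n)}(S)\times\R)$ of Proposition~\ref{homoprop} together with the explicit actions (\ref{isom}) and (\ref{hs}); the bookkeeping of the case split on $\Sigma_+$, where a possible zero eigenvalue of $S$ plays no role since $r_\gamma>0$; and the reformulation of (\ref{lcs}) as ``$r_\gamma>\lambda_{\max}^2$'' recorded in the Remark after Proposition~\ref{prop_good-homoth-quotient-implies-exponential-beta}. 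I expect no genuine obstacle: the theorem is essentially a gluing of those two propositions via Corollary~\ref{cor:fp}, and the borderline case in which $(s_\gamma/c_\gamma)^2$ coincides with an eigenvalue of $S$ --- where Proposition~\ref{prop_conjugating-good-homotheties} produces no conjugation and the method genuinely breaks --- is exactly what the ``unless'' clause and (\ref{bad}) are carving out.
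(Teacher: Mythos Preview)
Your proposal is correct and follows essentially the same approach as the paper: both proofs use Corollary~\ref{cor:fp} to get $\epsilon_\gamma=1$ and $c_\gamma\ne0$, then invoke Proposition~\ref{prop_conjugating-good-homotheties} to conjugate a violating $\gamma$ into $\R\times\cent_{\O(n)}(S)\times\R$, and finally apply Proposition~\ref{prop_good-homoth-quotient-implies-exponential-beta} to contradict proper discontinuity. Your write-up is somewhat more explicit about why $r_\gamma$ avoids all eigenvalues of $S$ and why conjugation by an isometry preserves proper discontinuity, cocompactness, and the invariants $s_\gamma$ and $|c_\gamma|$, but the underlying argument is the same.
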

		\begin{proof}
	Assume for contradiction that $\Gamma$ acts properly discontinuous and contains a strict homothety $\gamma$, i.e.~with $s_\gamma\not=0$. By Corollary \ref{cor:fp}, $\gamma $ must have $\epsilon_\gamma=1$ and $c_\gamma\not=0$.
If all eigenvalues of $S$ are nonpositive (including zero), or if 
 $\left(\frac{s_\gamma}{c_\gamma}\right)^2> \lambda_{\max}^2$, 
 we can use Proposition~\ref{prop_conjugating-good-homotheties} to conjugate by an isometry $\Gamma$ to $\hat\Gamma$, which still acts properly discontinuous and cocompactly but now contains a  strict homothety with he same $s_\gamma$ and $c_\gamma$ but in  
$ \R\times \cent_{\O(n)}(S) \times \R$. Then we can apply Proposition~\ref{prop_good-homoth-quotient-implies-exponential-beta} to get a contradiction.
 Hence, unless all $\gamma\in \Gamma \setminus \Iso$ satisfy inequality (\ref{bad}),
we get that $\Gamma$ is contained in the isometries.

For $(M,g_S)$ of imaginary type, $S$ has no positive eigen value and hence $\Gamma$ is contained in the isometries.
\end{proof}		

%		\begin{proof}
%			Assume for contradiction that $\hat\Gamma$ contains a strict homothety.
%			Since  $(\R^{n+2},g_S)$ is of imaginary type, and so every eigenvalue of $S$ is negative, after possibly squaring $\phi$, we may apply without restriction Proposition~\ref{prop_conjugating-good-homotheties} to get a group $\Gamma$ conjugate to $\hat\Gamma$ such that $\Gamma$ contains an element of the form $\phi=(c,A,s)   \in \R\times \cent_{\O(n)}(S)\times \R $ such that $s\neq 0$. 
%			%We may of course square this element to obtain an element within $\Gamma$ of the form $\gamma:=(0,0,1,c,A,s)$, still with $s\neq 0$.
%%			Note that $\Gamma$ acts properly discontinuously and cocompactly by \cref{lem_conjugate-properly-discontinuous,lem_conjugate-cocompact}.
%%			Then we apply \cref{lem_cyclic-isnt-cocompact} to conclude that $\Gamma$ is not cyclic, since $\Gamma$ acts cocompactly.
%%			Now let $\phi\in\Gamma$ be an element such that $\phi$ is not in $\<\gamma\>$.
%Being of imaginary type, the condition in Proposition ~\ref{prop_good-homoth-quotient-implies-exponential-beta} is trivially satisfied and we can apply it to obtain a contradiction. Hence $\phi$ must be an isometry. 
%\end{proof}

To prove the remainder of Theorem~\ref{intro-thm_no-homoth-quotients-of-imaginary-CW}, recall from by Proposition~\ref{prop_cahen-kerbrat-weyl-curved-implies-no-conformals} that if   $(\R^{n+2},g_S)$ is conformally curved,  		  $\Gamma$ is a group of homotheties and by Theorem ~\ref{prop_cahen-kerbrat-weyl-curved-implies-no-conformals} 
a group of isometries.
			Then the metric endowed to the quotient is locally isometric to $(\R^{n+2},g_S)$ and hence locally symmetric, 
			so by Corollary~\ref{locsymcompconf}, the conformal group of the quotient is equal  to its isometry group.

	\subsection{Cocompact groups in the centraliser of an essential homothety}	\label{centralsec}
	In this section we are going to provide another non-existence result. We will show that given an essential conformal  transformation $\eta$ on a conformally curved Cahen-Wallach space, there is no  subgroup in the centraliser of $\eta$ that acts cocompactly and properly discontinuously. The motivation for this was explained in the introduction: if there was such a subgroup, then the essential conformal transformation would descend   to the compact quotient (see \cite{stuart-thesis} for details) and hence would provide a counterexample to the Lorentzian Lichnerowicz conjecture. The counterexamples to the conjecture in signatures beyond Lorentzian in \cite{frances12} are constructed in this way. Our result excludes this possibility.
	
Recall the definition of $H_S$ in (\ref{defHS}) 
and consider a pure homothety \[h_s=\diag (1, \e^s\1, \e^{2s})\in H_S.\] It is straightforward to compute its centraliser in $H_S$ as
\[\cent_{H_S}(h_s) =\E(1)\times \cent_{\O(n)}(S)  \times \R.\]
Furthermore, denote by $p$ the projection
\[
p:H_S\longrightarrow \E(1)\times \cent_{\O(n)}(S)  \times \R\ \simeq\  H_S/\Hei_n,\]
which is a group homomorphism with kernel $\Hei_n$. 
\begin{proposition}\label{prop_cent-essential-homothety-projection-to-cent-pure-injective}
Let $\eta\in H_S$ a strict homothety 	fixing the origin in $\R^{n+2}$ and with $\epsilon_\eta=1$ and let $C_{H_S}(\eta)$ be its centraliser in the homotheties. 
Then the group homomorphism
\[q:=p|_{\cent_{H_S}(\eta)}: \cent_{H_S}(\eta)\longmapsto \E(1)\times \cent_{\O(n)}(S)  \times \R,\]
is injective.

Moreover, let $\gamma_n=(b_n,\beta_n)\cdot (c_n,\epsilon_n,A_n,s_n)\in C_{H_S}(\eta)$ such that $c_n\to_{n\to\infty} 0$. Then $b_n\to0$, and $\beta_n(0)\to 0$.
			In particular, if $c_n\to 0$, then $\gamma_n(0)\to0$.

\end{proposition}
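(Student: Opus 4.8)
The plan is to split the statement into two parts: first the injectivity of $q$, and then the asymptotic control on sequences $\gamma_n \in \cent_{H_S}(\eta)$ with $c_n \to 0$. For the injectivity, I would argue by contradiction: suppose $\gamma = (b,\beta)\cdot(0,1,\1,0) \in \ker q$, i.e.\ $\gamma = (b,\beta) \in \Hei_n$ lies in $\cent_{H_S}(\eta)$. Write $\eta = (\tilde b, \tilde\beta)\cdot (0,1,\tilde A, \tilde s)$ with $\tilde s = s_\eta \neq 0$ (using that $\eta$ is a strict homothety fixing the origin, so by Proposition~\ref{prop_homothetic-fixed-points} its Euclidean motion $E_\eta$ has a fixed point, and combined with $\epsilon_\eta = 1$ this forces $c_\eta = 0$, and then the fixed point at the origin pins down the Heisenberg part to be trivial, so in fact $\eta = h_{s_\eta}\cdot A_\eta$-type — I should check this reduction carefully, as it may simplify $\eta$ to just a pure homothety composed with a rotation). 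The commutation relation $\gamma\eta = \eta\gamma$ then becomes, after applying the automorphism formulas~\eqref{product} and~\eqref{hprod}, an equation of the form $\beta = \e^{s_\eta} A_\eta\, \beta - (\text{correction})$; since $\e^{s_\eta} A_\eta - \1$ is invertible (as $s_\eta \neq 0$ and $A_\eta$ is orthogonal, so no eigenvalue of $\e^{s_\eta}A_\eta$ equals $1$ — this is exactly the kind of computation in Lemma~\ref{lem_beta-solving-conjugation-equation}), we conclude $\beta = 0$, and then solving the $\R$-component forces $b = 0$. Hence $\ker q$ is trivial.

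For the second part, suppose $\gamma_n = (b_n,\beta_n)\cdot(c_n,\epsilon_n,A_n,s_n) \in \cent_{H_S}(\eta)$ with $c_n \to 0$. The key idea is to exploit the commutation relation $\gamma_n \eta = \eta \gamma_n$ componentwise. Writing out the product using~\eqref{product} and the automorphism $\vf$ from Proposition~\ref{homoprop}, the $\Hei_n$-components of both sides must agree. I expect the $V_S$-component of this equation to read something like
\[
\beta_n + \e^{s_n} A_n\, \tilde\beta \circ E_{c_n,\epsilon_n}^{-1} \;=\; \tilde\beta + \e^{s_\eta} A_\eta\, \beta_n \circ E_{0,1}^{-1},
\]
i.e.\ $(\e^{s_\eta}A_\eta - \1)\beta_n = \beta_n - \e^{s_n}A_n\,\tilde\beta\circ E_{c_n,\epsilon_n}^{-1} + \tilde\beta - (\text{stuff})$, which, since the left operator is invertible, expresses $\beta_n$ (equivalently its initial data $\beta_n(0), \dot\beta_n(0)$) continuously in terms of $c_n, \epsilon_n, A_n, s_n$ and the fixed data of $\eta$. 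As $c_n \to 0$, passing to a subsequence on which $\epsilon_n, A_n, s_n$ converge (the first two lie in compact sets; for $s_n$ I need a separate boundedness argument, perhaps from properness or from the structure of the centraliser), the right-hand side converges to its value at $c = 0$, which by the injectivity/rigidity just established must be the "trivial" configuration, forcing $\beta_n(0) \to 0$. The $\R$-component of the commutation relation then similarly pins $b_n \to 0$. Finally $\gamma_n(0) = (c_n, \beta_n(0), b_n - \tfrac12\langle \dot\beta_n(0), \beta_n(0)\rangle)$ via~\eqref{homoth1}, and all three entries tend to $0$, giving $\gamma_n(0) \to 0$.

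The main obstacle I anticipate is controlling the $s_n$ (and $A_n$) along the sequence: a priori the centraliser $\cent_{H_S}(\eta)$ could contain elements with $s_n$ unbounded, which would break the continuity argument in the last step. One natural route is to observe that the equation relating $\beta_n$ to $\e^{s_n}A_n\tilde\beta\circ E_{c_n,\epsilon_n}^{-1}$ itself constrains $s_n$: if $\tilde\beta \neq 0$ and $s_n \to +\infty$ along a subsequence, the term $\e^{s_n}A_n\tilde\beta\circ E_{c_n,\epsilon_n}^{-1}$ would blow up (in the imaginary or kernel directions $\tilde\beta$ is bounded, in the positive-eigenvalue directions it grows only exponentially in $c_n \to 0$, hence stays bounded), contradicting that it equals $\beta_n - \tilde\beta - \cdots$ unless $\beta_n$ also blows up — but then the $\R$-component equation, involving $\omega$-terms quadratic in $\beta_n$, gives the contradiction. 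Alternatively, if $\tilde\beta = 0$ so $\eta$ is essentially a pure homothety times a rotation, one computes $\cent_{H_S}(\eta)$ directly (as was done for $\cent_{H_S}(h_s) = \E(1)\times\cent_{\O(n)}(S)\times\R$) and the claim becomes more transparent. I would carry out the general case by reducing $\eta$ as far as possible via conjugation before running the commutation computation, so that the bookkeeping with $\vf$, $\omega$, and the shift identities~\eqref{shiftomega}–\eqref{Aomega} stays manageable.
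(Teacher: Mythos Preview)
Your approach matches the paper's: compute the commutation relation $\gamma\eta=\eta\gamma$ on the $\Hei_n$-component, first for $\gamma\in\ker q$ to get injectivity, then for general $\gamma$ to express $(b,\beta)$ as a function of $q(\gamma)=(c,\epsilon,A,r)$ and pass to the limit as $c_n\to 0$.

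One correction to your reduction of $\eta$: fixing the origin with $\epsilon_\eta=1$ forces $c_\eta=0$, $b_\eta=0$ and $\beta_\eta(0)=0$, but \emph{not} $\beta_\eta\equiv 0$; the paper retains the residual $\beta_\eta\in L_S$ throughout. This does not affect the injectivity step (for $\gamma=(b,\beta)\in\Hei_n$ the $V_S$-equation still collapses to $\e^{s}A_\eta\beta=\beta$, hence $\beta=0$ and then $b=0$), but it matters for the second part. The paper's key identity at $t=0$ is
\[
(\1-\e^{s}A_\eta)\,\beta_n(0)\;=\;-\e^{s_n}A_n\,\beta_\eta(-\epsilon_n c_n),
\]
with $s=s_\eta$ fixed, and the paper concludes $\beta_n(0)\to 0$ directly from $\beta_\eta(0)=0$ and invertibility of $\1-\e^{s}A_\eta$.

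Your concern about unbounded $s_n$ is well founded, and the paper does not address it either. Since $\beta_\eta(-\epsilon_n c_n)\sim-\epsilon_n\dot\beta_\eta(0)\,c_n$, the right-hand side above behaves like $\e^{s_n}c_n$, which need not tend to zero without a bound on $s_n$. Your proposed blow-up argument does not close this: the commutation system \emph{uniquely} determines $(b,\beta)$ from any $(c,\epsilon,A,r)$ with $[A,A_\eta]=0$, so large $\beta_n$ is consistent and produces no contradiction in the $\R$-component. The statement is clean when $\beta_\eta\equiv 0$ (then $\beta_n\equiv 0$ and $b_n=0$ identically, so $C_{H_S}(\eta)\subset\E(1)\times C_{\O(n)}(S)\times\R$), which is exactly the alternative you isolated; in the general case both your sketch and the paper's argument leave the same point unresolved.
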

\begin{proof}
			First, observe that since $\eta(0)=0$ and $\epsilon_\eta=1$, we have that $c_\eta=0$ and 
			%$\beta_\eta(0)=0$, $b_\eta=0$, and $c_\eta=0$, so we write 
		 $\eta=\psi h_{s}$ with \[\psi=(0,\beta_\eta)\cdot A_\eta\in V_S\rtimes_\vf C_{\O(n)}(S)\subset  \Hei_n\rtimes_\vf C_{\O(n)}(S)\] an isometry fixing the origin, i.e.~with $\beta_\eta(0)=0$.

			Let $\gamma=(b,\beta) \in\ker(q)= \Hei_n\cap C_{H_S}(\eta)$. 
			By (\ref{hprod}) we have that  $\gamma\in \cent_{H_S}(\eta)$ if and only if 
			\begin{equation}\label{isocen}
	\psi=		\gamma\cdot \psi \cdot\vf_s(\gamma^{-1}) =(b,\beta)\cdot \psi \cdot \left(-\e^{2s}b,-\e^s\beta\right),
			\end{equation}
where $\vf_s$ was defined in (\ref{fis}), and equivalently  
 by 
			(\ref{product}) that
			\[
			0=(1-\e^{2s})b  +\langle \beta (0),\dot\beta_\eta(0)\rangle
\quad\text{
			and }\quad			\e^sA_\eta\beta(t)  = \beta ( t)\quad\text{for all $t$}
			 .\]
Since $s\not=0$, the second equation implies that $\beta \equiv 0$, and with that the first gives $b =0$. Therefore, $\gamma=\id$ and $q$ is injective. 			
			
			To show the second part of the proposition, we first determine the inverse of $q$ on its image.
			For $(c, \epsilon  ,A ,r )=q (\gamma)$  in the image of $q$ we need to find $(b,\beta)\in \Hei_n$ such that 			$\gamma=\phi h_{r }\in \cent_{H_S}(h_s)$ with 
			\[\phi=(b,\beta)\cdot(c, \epsilon  ,A ) \in \Hei_n\rtimes_\vf (\E(1)\times \cent_{\O(n)}(S).\]
Since homotheties commute and using (\ref{hprod}) again,  $\eta=\gamma \eta \gamma^{-1}$ yields the equation
			\[
			\psi = 
			\phi h_{r } \psi h_s h_{r }^{-1} \phi^{-1} h_s^{-1}
			=
			\phi \vf_{r}(\psi) \vf_s(\phi^{-1}).\]
Using (\ref{product}) this can be seen to be equivalent to $A_\eta=A  A_\eta A ^\top$,
\[
(\1-\e^sA_\eta ) \beta  (t)  =\beta_\eta (t)-\e^{r }A \beta_\eta(\epsilon  (t -c ))
\]
and  
\[
b (1-\epsilon \e^{2s}) = - \e^{r } \omega\left((1-\epsilon\e^s A_\eta) \beta  ,A  \beta_\eta\circ E_{-\epsilon c, \epsilon}\right)
+\e^s \omega\left(\beta, A_\eta \beta  \right),
\]			
where we use (\ref{shiftomega}) and (\ref{Aomega}).
%
%\[
%b (1-\epsilon \e^{2s}) = \omega\left(\beta,\e^s A_\eta \beta  - \e^{r }A  \beta_\eta\circ E_{-\epsilon c, \epsilon}\right) - \e^{r+s} \omega \left(  A_\eta \beta, A\beta_\eta\circ E_{-\epsilon c,\epsilon}\right).
%\]			
Evaluating these equations at $t=0$ and taking into account that $\beta_\eta(0)=0$			we get
\[
(\1-\e^sA_\eta) \beta  (0)  =-\e^{r }A \beta_\eta(-\epsilon  c ))
\]
and  
\begin{eqnarray*}
2b (1-\epsilon \e^{2s}) &=&
 - \e^{r }\left(  \langle (1-\epsilon \e^s A_\eta) \beta(0)  ,A \dot \beta_\eta( -\epsilon c)\rangle  - \langle (1-\epsilon\e^s A_\eta) \dot\beta(0)  ,A  \beta_\eta(-\epsilon c) \rangle\right) 
\\
&&{ }+\e^s \langle (\beta(0) , A_\eta \dot \beta(0)  \rangle.\end{eqnarray*}
If  $\gamma_n$ is a sequence as in the proposition, i.e.~with $c_n\to 0$, using that $s\not=0$, the first equation implies that $\beta_n		(0)\to 0$ and consequently the second implies that $b_n\to 0$. Hence, $\gamma_n(0)$ converges to $0$. 
		\end{proof}
		\begin{theorem}\label{thm_cent-fixes-zero-not-PD-cocompact}
			Let $\eta$ be a strict homothety in $H_S$ that fixes zero. Let $\Gamma$ be a subgroup of the centraliser of $\eta$ in $H_S$.
			Then $\Gamma$ does not act properly discontinuously and cocompactly.
		\end{theorem}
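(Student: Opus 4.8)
The key constraint is that $\eta$ is a strict homothety with $\epsilon_\eta=1$ fixing zero, so (by the setup of Proposition~\ref{prop_cent-essential-homothety-projection-to-cent-pure-injective}) we may write $\eta=\psi h_s$ with $s\neq 0$ and $\psi$ an isometry fixing the origin. I would argue by contradiction: suppose $\Gamma\subseteq \cent_{H_S}(\eta)$ acts properly discontinuously and cocompactly. By Lemma~\ref{lem_cyclic-isnt-cocompact}, $\Gamma$ cannot be cyclic, and by Corollary~\ref{cor:fp} every $\gamma\in\Gamma\setminus\Iso$ has $\epsilon_\gamma=1$ and $c_\gamma\neq 0$. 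The engine of the proof is the injective homomorphism $q=p|_{\cent_{H_S}(\eta)}$ from Proposition~\ref{prop_cent-essential-homothety-projection-to-cent-pure-injective}, which lets me detect convergence of group elements through the behaviour of $(c_\gamma,\epsilon_\gamma,A_\gamma,s_\gamma)$, together with the displayed implication: if $\gamma_n\in\cent_{H_S}(\eta)$ with $c_n\to 0$ then $\gamma_n(0)\to 0$.

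**Main steps.** First I would observe that $\Gamma$ cannot be contained in the isometries: otherwise the quotient $\R^{n+2}/\Gamma$ would carry a complete locally symmetric Lorentzian metric with a nowhere-zero parallel null vector field $\p_v$ descending to it (as $\p_v$ is $\Iso$-invariant), contradicting compactness — a smooth nowhere-zero vector field that is moreover parallel on a compact manifold is fine, so instead I would use that $\eta$ descends to the compact quotient as a \emph{strict} homothety (since $\eta$ centralises $\Gamma$), contradicting the fact that compact semi-Riemannian manifolds admit no strict homotheties (Corollary~\ref{locsymcompconf}'s underlying volume-form argument). Wait — that already gives the theorem when $\Gamma\subseteq\Iso$, so the real content is the case $\Gamma\not\subseteq\Iso$. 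Here pick a strict $\gamma_0\in\Gamma$; it has $\epsilon_{\gamma_0}=1$, $c_{\gamma_0}\neq 0$. Since $\Gamma$ is not cyclic there is $\phi\in\Gamma\setminus\langle\gamma_0\rangle$, and again I may assume $\phi$ is strict (multiply by $\gamma_0$ if needed). Now I would build a sequence witnessing a failure of (PD1): conjugating $\phi$ by powers of $\gamma_0$ produces $\phi_k:=\gamma_0^{-k}\phi\gamma_0^{k}\in\cent_{H_S}(\eta)$, and under $q$ its $t$-translation component stays equal to $c_\phi$ while — crucially — because $\eta$ has a nonzero dilation factor $s$ and all of $\Gamma$ commutes with $\eta$, one can show the $c$-components of a suitable combination of powers of $\gamma_0$ and $\phi$ can be driven to zero while the group elements remain distinct (using a Dirichlet/rational-approximation argument as in the proof of Proposition~\ref{prop_good-homoth-quotient-implies-exponential-beta}). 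Then the second part of Proposition~\ref{prop_cent-essential-homothety-projection-to-cent-pure-injective} forces the corresponding orbit points $\gamma(0)$ to converge to $0$, so the $\Gamma$-orbit of $0$ accumulates at $0$, contradicting (PD1).

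**The obstacle.** The delicate point is producing a sequence of \emph{pairwise distinct} elements $\gamma_n\in\Gamma$ with $c_{\gamma_n}\to 0$. The natural idea — take $\gamma_0^{p}\phi^{q}$ with $p/q$ approximating $-c_\phi/c_{\gamma_0}$ — makes $c\to 0$, but one must rule out that infinitely many of these collapse to the identity (which would not violate (PD1)); this is where injectivity of $q$ and the fact that $s_{\gamma_0},s_\phi,s$ are constrained by commuting with $\eta$ come in. If $\Gamma$ has rank one modulo $\Iso$ this might fail, so one probably needs to first treat the subcase where the image of $\Gamma$ under $\gamma\mapsto(c_\gamma,s_\gamma)$ has rank one separately — there $\Gamma$ is "virtually cyclic in the relevant direction" and one appeals instead to the finitely-self-adjacent fundamental domain argument of Lemma~\ref{FSAlemma} as in Lemma~\ref{lem_cyclic-isnt-cocompact}. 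So I expect the proof to split: (i) $\Gamma\subseteq\Iso$, handled by the strict-homothety-descends contradiction; (ii) the $c$-components of $\Gamma\setminus\Iso$ generate a rank-one subgroup of $\R$, handled by a fundamental-domain/cocompactness obstruction; (iii) the generic case, handled by the rational-approximation accumulation argument above combined with Proposition~\ref{prop_cent-essential-homothety-projection-to-cent-pure-injective}. Assembling (ii) and (iii) cleanly — in particular making the "distinct elements" claim airtight — is the main work.
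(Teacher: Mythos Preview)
Your overall strategy---contradiction via rational approximation to force $c_n\to 0$ and then invoke the second part of Proposition~\ref{prop_cent-essential-homothety-projection-to-cent-pure-injective} to get $\gamma_n(0)\to 0$---is exactly the paper's. However, you are missing one observation that dissolves both your case split and your ``distinct elements'' obstacle in a single stroke.

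The paper does not treat $\Gamma\subseteq\Iso$ separately, nor does it worry about rank. Instead it first applies the second part of Proposition~\ref{prop_cent-essential-homothety-projection-to-cent-pure-injective} to a \emph{constant} sequence: if $\gamma\in\Gamma$ has $c_\gamma=0$, then $\gamma(0)=0$, and freeness forces $\gamma=\id$. This holds for \emph{every} non-identity $\gamma$, isometry or not. Combined with torsion-freeness (Proposition~\ref{prop_torsion-implies-fixed-point}) one also gets $\epsilon_\gamma=1$ for all $\gamma$, since $\epsilon_\gamma=-1$ would give $c_{\gamma^2}=0$. Thus $\gamma\mapsto c_\gamma$ is an \emph{injective} homomorphism $\Gamma\to\R$. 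Now your three cases collapse: case~(i) is subsumed, since there is nothing special about isometries once $c$ is known to be injective on all of $\Gamma$; case~(ii) cannot occur, because if the image of $c$ were discrete in $\R$ then injectivity would make $\Gamma$ itself cyclic, contradicting Lemma~\ref{lem_cyclic-isnt-cocompact}; and in case~(iii) the elements $\gamma_0^{p_n}\phi^{-q_n}$ are automatically non-trivial and distinct whenever their $c$-values are. The paper packages this slightly differently---it shows the map $\gamma\mapsto(c_\gamma,s_\gamma)$ into $\R^2$ is injective with discrete image, hence $\Gamma$ contains a copy of $\Z^2$, and then runs the rational approximation on two generators---but the content is the same.

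So your proof sketch is on the right track; the gap is that you reached for Corollary~\ref{cor:fp}, which applies only to strict homotheties, when the stronger tool---Proposition~\ref{prop_cent-essential-homothety-projection-to-cent-pure-injective} applied pointwise---was already in hand and handles all elements uniformly. Your separate argument for case~(i) via descent of $\eta$ to the compact quotient is correct and rather elegant, but unnecessary.
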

		\begin{proof}
			Assume for contradiction that $\Gamma$ is a subgroup of $\cent_{H_S}(\eta)$ acting properly discontinuously and cocompactly. Without loss of generality we can assume that $\epsilon_\eta=1$. If not, $\eta^2$ has this property and we still have that $\Gamma$ is contained in the centraliser of $\eta$. We will derive a contradiction to (PD1).

			We can apply Proposition~\ref{prop_cent-essential-homothety-projection-to-cent-pure-injective} to get an isomorphism between $\Gamma$ and a subgroup $\hat \Gamma=q(\Gamma)$ of $\E(1)\times C_{\O(n)}(S)  \times \R$.
			We briefly justify that $\hat\Gamma=q(\Gamma)$ is discrete when the homotheties are given the product topology: 
			
			Let $\Gamma$ be equipped with a topology such that the action of $\Gamma$ is continuous, i.e.~the map $(\gamma,x)\mapsto \gamma(x)$ is continuous. Note that the map $\Theta(\gamma,x) = (x,\gamma(x))$ is then also continuous. 
			Because of  (PD1), there is  an open set $U$ such that $\Theta^{-1}(U\times U) = \{\id\}\times U$. Then, with 
			with $\Theta$ being continuous, 
			 $\{\id\}$ is open in $\Gamma$ and hence $\Gamma$ is discrete. 
			Hence, since  the homotheties act continuously when given the product topology, $\Gamma$ is discrete with respect to this topology. Then we note that $p$ is a projection map and hence is an open map. Therefore $\hat\Gamma=q(\Gamma) = p(\Gamma)$ is discrete. We remark that this argument does not require us to claim that the compact-open topology on the homotheties coincides with the product topology.
%			{\blu Here's the draft for this justification. I find the distinction between $p,q$ a little cumbersome here, but perhaps you can fix that when touching this up. Is it fine that I don't justify why the action is continuous when they have the product topology}

			By the second part of Proposition~\ref{prop_cent-essential-homothety-projection-to-cent-pure-injective}, if we have $\gamma\in\Gamma$ such that $c_\gamma=0$, then $\gamma(0)=0$.
			Then by freeness of $\Gamma$, $\gamma=\id$. So for all non-identity elements $\gamma\in \Gamma$ we have $c_\gamma\neq 0$.
			Note that by Proposition~\ref{prop_torsion-implies-fixed-point}, $\Gamma$ has no torsion elements.
			Then we also conclude that $\epsilon_\gamma=1$ for all non-identity elements $\gamma\in \Gamma $, since otherwise $c_{\gamma^2}=0$.

			Now we observe that the projection 
			$$\rho:\hat \Gamma=q(\Gamma)\longrightarrow \R^2,\qquad \gamma=(c_\gamma,\epsilon_\gamma, A_\gamma,s_\gamma)\longmapsto (c_\gamma,s_\gamma)$$
			is an injective homomorphism. Indeed, its kernel is contained in the compact group $K=\Z_2\times C_{\O(n)}(S)$. Since $\hat\Gamma$ is discrete, any non-identity element in the kernel must be torsion, which would imply that $\Gamma$ does not act freely by Proposition~\ref{prop_torsion-implies-fixed-point}. Therefore $\rho$ is injective.

			Since $\rho$ is a projection map, it is also an open map, i.e.~$\rho(q(\Gamma))$ is a discrete subgroup of $\R^2$. Hence, using that $\Gamma$ cannot be cyclic and also act cocompactly by Lemma~\ref{lem_cyclic-isnt-cocompact}, $\Gamma$ is a discrete non-cyclic subgroup of $\R^2$. Therefore $\Gamma$ must contain a subgroup that is isomorphic to $\Z^2$.
Let $\gamma,\phi\in\Gamma$ be two generators of $\Z^2$, i.e.~such that $\<\gamma\>\cap\<\phi\>=\{\id\}$.
			By earlier in this proof, $c_\phi\neq0$, and so we take a sequence of rational numbers $p_n/q_n$ approaching $c_\gamma/c_\phi$. Then we consider the sequence $\gamma^{p_n}\phi^{-q_n}$. This is a sequence of elements for which the component $c_n$ approaches $0$. Hence by the second part in Proposition~\ref{prop_cent-essential-homothety-projection-to-cent-pure-injective}, $\gamma^{p_n}\phi^{-q_n}(0)\to 0$, contradicting {PD1} in the definition of proper discontinuity.
		\end{proof}

		\begin{theorem}\label{thm_no-centraliser-essential-quotient}
			A group of homotheties of a Cahen-Wallach space centralising an essential homothety cannot act properly discontinuously and cocompactly.
		\end{theorem}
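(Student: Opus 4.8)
The plan is to deduce the statement from Theorem~\ref{thm_cent-fixes-zero-not-PD-cocompact}, which already settles the case of a group of homotheties centralising a strict homothety that fixes the origin, by transporting the fixed point of the given essential homothety to the origin with an isometry. First I would observe that an essential homothety $\eta$ of a Cahen-Wallach space is necessarily \emph{strict}: an isometry of $g_S$ is trivially an isometry of a metric in the conformal class, so it is never essential. Hence Theorem~\ref{thm_CW-essential-iff-fixed-point} applies and produces a point $p=(t_0,\x_0,v_0)\in\R^{n+2}$ with $\eta(p)=p$.

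Next I would use that the isometry group of $(\R^{n+2},g_S)$ acts transitively on $\R^{n+2}$. Concretely, taking $\beta\in V_S$ to be the solution of $\ddot\beta=S\beta$ with $\beta(0)=\x_0$ and $\dot\beta(0)=0$, the isometry $\psi:=\psi_{t_0,1,v_0,\beta,\1}$ of the form (\ref{isom}) satisfies $\psi(0,0,0)=(t_0,\beta(0),v_0-\tfrac12\langle\dot\beta(0),\beta(0)\rangle)=p$; alternatively this transitivity is immediate from the presentation $(\R^{n+2},g_S)=(\Hei_n\ltimes_\alpha\R)/L_S$ of the space as a homogeneous space of its transvection group. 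Fixing such a $\psi$ with $\psi(0)=p$, set $\tilde\eta:=\psi^{-1}\circ\eta\circ\psi$. Since conjugation by an element of $H_S$ preserves the homothety group $H_S$ (see (\ref{defHS})) and does not change the dilation factor $s_\eta\neq 0$ (as $(\psi^{-1}\eta\psi)^*g_S=\psi^*\eta^*g_S=\e^{2s_\eta}g_S$), the map $\tilde\eta$ is again a strict homothety in $H_S$, and $\tilde\eta(0)=\psi^{-1}(\eta(p))=\psi^{-1}(p)=0$, so $\tilde\eta$ fixes the origin.

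Finally, suppose for contradiction that a group $\Gamma$ of homotheties centralising $\eta$ acts properly discontinuously and cocompactly. Then $\tilde\Gamma:=\psi^{-1}\Gamma\psi$ is a subgroup of the centraliser $C_{H_S}(\tilde\eta)$, because conjugation by $\psi$ is a group automorphism; and since conjugation by the diffeomorphism $\psi$ preserves both proper discontinuity and cocompactness of an action, $\tilde\Gamma$ also acts properly discontinuously and cocompactly. This contradicts Theorem~\ref{thm_cent-fixes-zero-not-PD-cocompact}, which proves the claim; combined with Corollary~\ref{curvedconf} it also gives Theorem~\ref{thm_main-no-essential-quotient} in the conformally curved case. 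I do not expect a real obstacle at this stage: all the substance sits in Theorem~\ref{thm_CW-essential-iff-fixed-point} and Theorem~\ref{thm_cent-fixes-zero-not-PD-cocompact}, and the only points that deserve a line of care are that essentiality forces strictness and that the isometry group is transitive, so that the hypothesis ``fixes zero'' of Theorem~\ref{thm_cent-fixes-zero-not-PD-cocompact} can be arranged after conjugation.
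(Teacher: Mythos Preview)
Your proof is correct and follows exactly the same approach as the paper: use Theorem~\ref{thm_CW-essential-iff-fixed-point} to obtain a fixed point, conjugate by an isometry carrying the origin to that fixed point (using transitivity of the isometry action), and then invoke Theorem~\ref{thm_cent-fixes-zero-not-PD-cocompact}. Your version is in fact slightly more detailed than the paper's, since you make explicit why an essential homothety is necessarily strict and you exhibit the transporting isometry concretely.
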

		\begin{proof}Let $\eta $ be the essential homothety.
			By Theorem~\ref{thm_CW-essential-iff-fixed-point}, $\eta$  has a fixed point.
			Since Cahen-Wallach spaces are homogeneous, the isometry group acts transitively. We conjugate $\eta$ and  $\Gamma$ by the isometry that sends $0$ to the fixed point of $\eta$.
%			Then 
			%by \cref{lem_conjugate-cocompact,lem_conjugate-properly-discontinuous}, 
Note that			$\Gamma$ acts properly discontinuously and cocompactly if and only if its conjugate acts properly discontinuously and cocompactly.
			Hence, without loss of generality,  we assume  that $\eta$ fixes $0$.
			Since  $\Gamma$ centralises $\eta$, by \cref{thm_cent-fixes-zero-not-PD-cocompact}, $\Gamma$ does not act properly discontinuously and cocompactly.
		\end{proof}
Combining this result with 
Proposition~\ref{prop_cahen-kerbrat-weyl-curved-implies-no-conformals},
we obtain Theorem~\ref{thm_main-no-essential-quotient}.

		\subsection{Examples}
		\label{ex-sec}
In this last section we are going to illustrate some of the difficulties that arise when attempting to construct compact  quotients of Cahen-Wallach spaces by groups of conformal transformations. We start with some examples of isometric quotients.
	
\begin{example}[Compact isometric quotient of imaginary type]\label{ex_Isometric-quotient}
For Cahen-Wallach spaces of imaginary type, the function $\beta$ in an isometry is given by trigonometric functions. This makes it relatively straightforward to find groups of isometries that act
 properly discontinuously and cocompactly. For simplicity, let  $(\R^4,g_{-})$ be a conformally flat Cahen-Wallach space of imaginary type of dimension $4$.

Solutions to $\ddot\beta=S\beta$ are of the form $\veccy u\cos(t)+\veccy w\sin(t)$, where $\veccy u, \veccy w\in \R^2$.
Let  $\Gamma$ be generated by the following isometries, 
			\begin{align*}
				\gamma\begin{pmatrix}t\\\veccy x\\v\end{pmatrix} &:= \begin{pmatrix}t+\frac\pi2\\\veccy x\\v\end{pmatrix}, \qquad
					\eta\begin{pmatrix}t\\\veccy x\\v\end{pmatrix} := \begin{pmatrix}t\\\veccy x\\v+1\end{pmatrix},
					\end{align*}
			where $\x=(x^1,x^2)$ and 
			\begin{align*}
				\zeta\begin{pmatrix}t\\\veccy x\\v\end{pmatrix} &:= \begin{pmatrix}t\\\veccy x+ \beta(t) \\v - \<\beta(t) , \veccy x\>\end{pmatrix},\quad \text{ with }\quad\beta(t)=\begin{pmatrix} \cos(t)\\\sin(t)\end{pmatrix}
			\end{align*}
			Consider the diffeomorphism  $f:\R^4 \to (\R^4,g_{-})$ given by
			$$f\begin{pmatrix}u\\x\\y\\v\end{pmatrix} := \begin{pmatrix}u\\x\begin{pmatrix}\cos(u)\\\sin(u)\end{pmatrix}+y\begin{pmatrix}-\sin(u)\\\cos(u)\end{pmatrix}\\-v-xy\end{pmatrix},$$
			with inverse 
			$$f^{-1}\begin{pmatrix}t\\ x^1\\ x^2\\v\end{pmatrix} = \begin{pmatrix}t\\ x^1\begin{pmatrix}\cos(-t)\\\sin(-t)\end{pmatrix}+ x^2\begin{pmatrix}-\sin(-t)\\\cos(-t)\end{pmatrix}\\-v-\left( x^1\cos(-t)- x^2sin(-t)\right)\left(   x^1\sin(-t)+   x^2\cos(-t)\right)\end{pmatrix}.$$
			Then the action on $\R^4$ by $\Gamma$ is given by
			\begin{align*}
				f^{-1}\gamma f\begin{pmatrix}u\\x\\y\\v\end{pmatrix} &= \begin{pmatrix}u+\frac\pi2\\y\\-x\\v\end{pmatrix}, \qquad
					f^{-1}\eta f\begin{pmatrix}u\\x\\y\\v\end{pmatrix} = \begin{pmatrix}u\\x\\y\\v-1\end{pmatrix},&
				f^{-1}\zeta f\begin{pmatrix}u\\x\\y\\v\end{pmatrix} &= \begin{pmatrix}u\\x+1\\y\\v\end{pmatrix}.
			\end{align*}
The conjugation has removed  the dependence of the group action on $t$. $\Gamma$ acts properly discontinuously and cocompactly on $\R^4$: 
observe that
			$$\Lambda:= f\<\gamma^4,\ \zeta,\ \gamma^{-1}\zeta\gamma,\ \eta\>f^{-1} = \<2\pi e_1,e_2,e_3,-e_4\>$$
			is a subgroup of $f\Gamma f^{-1}$ of index $4$ and  a lattice isomorphic to $\Z^4$, so $\Lambda$ acts properly and cocompactly. Hence 
			%by \cref{lem_cocompactly-inherited,lem_proper-cocompact-inherited}, 
			$f\Gamma f^{-1}$ acts properly and cocompactly.
			Then we observe that $f\Gamma f^{-1}$ and $\Lambda$ both act freely. Hence $f\Gamma f^{-1}$ acts properly discontinuously and cocompactly on $\R^4$.
			Therefore, $\Gamma$ acts properly discontinuously and cocompactly on $(\R^4,g_{-})$ and  			
			$\R^4/\Gamma$ is fourfold covered by the  $4$-torus $\R^4/\Lambda$.
		\end{example}

\begin{example}[Compact isometric quotient of real type]\label{example} 
Here we give an example of a group $\Gamma$ 
acting properly discontinuously and cocompact by isometries on a conformally flat Cahen-Wallach space of real type and  of dimension $4$. We will also show why an attempt to generalise this to a group of homotheties fails. We follow the construction in  \cite[Chapter~5]{KathOlbrich15}.
For $r\in \N_{\ge 3}$, consider the polynomial
\[f(x)=x^2-rx+1,\] 
and let $\rho\not=\tfrac{1}{\rho}$ be its roots. 
Let $(\R^4,g_\rho)$ be the conformally flat Cahen-Wallach space of real type defined by $g_\rho:=g_S$ with $S=(\ln|\rho|)^2\1$, i.e.~
\[ g_\rho =2\d t \d v + (\ln|\rho|)^2(x^2+y^2)\d t^2 + \d x^2 +\d y^2.\]

According to  \cite[Proposition~8.8]{KathOlbrich15}, $(\R^4,g_\rho)$ admits a subgroup of the transvections,  $\Gamma \subset \Hei_2\rtimes_\vf\R$, such that $\R^4/\Gamma$ is a compact manifold. 
Such $\Gamma$ can be given as follows: 

Let 
\[
\beta (t)=\begin{pmatrix} \rho^t\\[1mm] \rho^{-t}\end{pmatrix} \quad \text{ and }\quad \hat\beta(t)=\beta (t+1)
\]
 be solutions to $\ddot\beta=S\beta$ and denote the corresponding isometries also by $\eta$ and $\hat\eta$. Let $\alpha_b$ be the translation in the $v$-component by $ b$ and  define $\gamma_c$ as the translation by $c$ in the $t$-component, 
\[
 \begin{pmatrix}t\\x\\y \\v\end{pmatrix} \stackrel{\alpha_b}{\mapsto}
 \begin{pmatrix}t
  \\
   x 
\\
 y
 \\
v+ b
 \end{pmatrix},\quad 
  \begin{pmatrix}t\\x\\y \\v\end{pmatrix} \stackrel{\gamma_c}{\mapsto}
 \begin{pmatrix}t+c
  \\
   x 
\\
 y
 \\
v
 \end{pmatrix}
 \]
 and let $\Gamma$ be the group of isometries generated by $\alpha:=\alpha_1$, $\eta$, $\hat \eta$ and $\gamma:=\gamma_1$. An arbitrary group element in $\Gamma$ is given as 
 \[
\begin{pmatrix}t\\x\\y \\v\end{pmatrix} \stackrel{\phi}{\mapsto}
 \begin{pmatrix}t+ k \\[1mm] x +\rho^{t}(n +m\rho)
\\[1mm]
y+\rho^{-t}(n +\frac{m}{\rho})
 \\[1mm]
  v+l -  \ln|\rho | \left( (n+m\rho)x \rho^{t} +\frac{1}{2}(n+m\rho)^2\rho^{2t}
 + (n+\frac{m}{\rho})y \rho^{-t} +\frac{1}{2}(n+\frac{m}{\rho})^2 \rho^{-2t}\right)
 \end{pmatrix}
\]
with $k,l,m,n\in \Z$.
 In order to show that $\Gamma$ acts cocompactly and properly discontinuously, we use  \cite[Proposition~4.8]{KathOlbrich15}).
First we note that 
\[
\beta(0)= \begin{pmatrix}1\\1\end{pmatrix},\quad \hat\beta(0)= \begin{pmatrix}\rho\\\tfrac{1}{\rho}\end{pmatrix}
\]
are linearly independent. This is condition (a) in \cite[Proposition~4.8]{KathOlbrich15}). Note that  $L=\Span_\R(\beta,\hat\beta)$ is  a Lagrangian subspace for $\omega$, i.e.~$\omega(\beta,\hat\beta)=0$. 
For condition (b)  \cite[Proposition~4.8]{KathOlbrich15}) we need to find a lattice $\Lambda$ in $\R\times L\subset \Hei_2$ that is invariant under the shift $\tau: t\mapsto t+1$. Note that 
\[\hat\beta(t+1)-r\hat\beta(t) +\beta(t)
=
\begin{pmatrix}
\rho^{t}  f(\rho)\\[2mm]
\rho^{-t} f(\rho^{-1})
\end{pmatrix} =0,
\]
so that 
\[\tau(\eta)=\hat\eta,\quad \tau(\hat\eta)= -\eta+r\hat\eta,\quad \tau^{-1}(\hat\eta)=\eta,\quad \tau^{-1}(\eta)= r\eta-\hat\eta.\]
Hence,
the lattice $\Lambda_0=\Span_\Z\{\eta,\hat\eta\}$ is stable under the action of the group $\<\tau\>$ and so is the lattice $\Lambda=\Span_\Z\{\alpha,\eta,\hat\eta\}$ in $\R\times L\subset \Hei_2$.
Hence, by  \cite[Proposition~4.8]{KathOlbrich15}), $\Gamma$ is a group of isometries that acts properly discontinously and cocompactly on $\R^4$.

In order to generalise this to a group $\hat \Gamma$ of homotheties acting cocompactly and properly discontinuously one could try to replace $\alpha$ by the translation $\alpha_\rho$ by $\rho$ in the $v$-component and $\gamma$ by the homothety $\hat \gamma$, 
 \[
\begin{pmatrix}t\\x\\y \\v\end{pmatrix} \stackrel{\gamma_c}{\mapsto}
 \begin{pmatrix}t+ 1
  \\
  \rho x 
\\
\rho y
 \\
  \rho^{2} v
 \end{pmatrix}
\]
and  $\hat\Gamma$ be the subgroup of $H= (\Hei\rtimes_\vf(\R\times\R)$ that is generated by $\alpha_\rho$, $\eta$, $\hat\eta $ $\hat\gamma$,
\[\Gamma=\<\alpha, \eta,\hat\eta,\hat\gamma\>.\]
Then one may try use
ideas in \cite[Chapters 3 \& 4]{KathOlbrich15}
 to show that $\Gamma$ acts properly discontinuously and cocompactly on $\R^4$.  
For this we need a group $G$ that is a semidirect product of a nilpotent group with $\R$. 
Note that also $\Hei_2$ is invariant under conjugation with 
$\R \hat\gamma$
%\left\{ \begin{pmatrix}t\\x\\y \\v\end{pmatrix} \stackrel{}{\mapsto}
% \begin{pmatrix}t+ c
%  \\
%  \rho^c x 
%\\
%\rho^c y
% \\
%  \rho^{2c}v
% \end{pmatrix}
% \mid c\in \R\right\}
in $H= (\Hei\rtimes_\vf(\R\times\R)$  and we set 
\[G=
\Hei_2\rtimes_{\vf}\R\hat\gamma.\]
Note that even though $\Hei_2$ is normal in $G$, $\Lambda_3$ is not normal in $\Gamma$, in fact
\[
\hat{\gamma}^{k}( m\alpha+n\eta+\hat n\hat\eta) \hat{\gamma}^l= 
\hat{\gamma}^k\left( m\alpha +n\sigma_l\eta+\hat n\sigma_l\hat\eta+ \hat{\gamma}_{l}\right)
=
m\alpha_{\rho^{1+2k}} +n\rho^{k}\sigma_l\eta+\hat n\rho^{k}\sigma_l\hat\eta+ \hat{\gamma}_{k+l}.
\]
This shows that the group $\Gamma$ is not discrete. Indeed, the sequence $\hat{\gamma}^{-k}\alpha\hat{\gamma}^k=\alpha_{\rho^{1-2k}} $ for $k\in \N$  converges to the identity. 
 \end{example}

 	\begin{example}\label{ex_homoth-no-real-type-quotient}
			The previous example demonstrates the issues that arise when attempting to modify a properly discontinuous and cocompact group of isometries of a Wey-flat Cahen-Wallach space of real type to a group of homotheties by maintaining the translations in the $v$-direction.  In this example we will try a different approach that avoids these translations.

	For simplicity, let $(\R^3,g_1)$ be a three dimensional Cahen-Wallach space of real type. Solutions to $\ddot \beta = S\beta$ are of the the form $a \e^{t}+ b \e^{-t}$, where $a,b\in\R$.

			As before, we consider the homothety $\gamma$ 	and the isometry  $\eta$
			$$\gamma:\begin{pmatrix}t\\  x\\v\end{pmatrix}\mapsto \begin{pmatrix}t+1\\e  x\\e^2v\end{pmatrix},\quad \eta: \begin{pmatrix}t\\  x\\v\end{pmatrix}\mapsto \begin{pmatrix}x\\  x + k\e^{t}\\v - \<k\e^{t},  x + \frac12 k\e^{t}\>\end{pmatrix}.$$
		We define a diffeomorphism $f:\R^3\to \R^3$ and its inverse by
			$$f:\begin{pmatrix}t\\y\\z\end{pmatrix}\mapsto \begin{pmatrix}t\\e^ty\\e^{2t}(z-y^2/2)\end{pmatrix},\qquad
f^{-1}:\begin{pmatrix}t\\ x \\v\end{pmatrix}\mapsto \begin{pmatrix}t\\e^{-t} x\\e^{-2t}(v+ x^2/2)\end{pmatrix}.$$
			The conjugates are
			$$f^{-1}\gamma f:\begin{pmatrix}x\\y\\z\end{pmatrix}\mapsto \begin{pmatrix}x+1\\y\\z\end{pmatrix},\qquad f^{-1}\eta f:\begin{pmatrix}x\\y\\z\end{pmatrix}\mapsto \begin{pmatrix}x\\y+k\\z\end{pmatrix}.$$
			At this stage it looks promising, but we still have a remaining direction to compactify. The issue that occurs in general at this stage is that when we have  a strict homothety $\gamma$ of the simplest form possible without admitting fixed points as in Proposition~\ref{prop_homothetic-fixed-points}, introducing an element $\alpha$ that translates in the $v$-direction will not help us, for the same reason as in the previous example: $\gamma^{-i}\alpha\gamma^i(0)$ will approach $0$. What this means is that it seems we will require $\beta$-terms to compactify in $n+1$ directions. However this turns out to be difficult:			if
			$$\zeta: \begin{pmatrix}t\\ x\\v\end{pmatrix}\mapsto \begin{pmatrix}t\\ x + k\e^{-t}\\v - k\e^{-t}\left( x + \frac12 k\e^{-t}\right)\end{pmatrix},$$
			then
			$$f^{-1}\zeta f: \begin{pmatrix}x\\y\\z\end{pmatrix}\mapsto \begin{pmatrix}x\\y+l\e^{-2x}\\z\end{pmatrix}.$$
			%\Tcom{Not sure if we leave all of this as it is...}
			This  demonstrates two issues: first that the conjugated element fails to act at all on the $z$ direction --- which is the direction still remaining to be compactified. And secondly, that when we have a homothety in the form of $\gamma$, we see immediately that only $n$ of the $\beta$ dimensions are able to grow fast enough to avoid $\gamma^{-i}\zeta\gamma^i(0)\to 0$. This is because it is not sufficient that $\beta$ be exponential, it must grow exponentially in the same direction as $\gamma$.
			So this example too cannot lead to a properly discontinuous and cocompact action.
%
%			It is worth noting at this point that this example extends slightly. If we take $S = \lambda^2$ for some $\lambda\in\R$ (the function $\beta$ must then be adjusted to $k\e^{\lambda t}$), and $\gamma:(t,\veccy x,v)\mapsto (t+c,\e^s\veccy x, \e^{2s}\veccy x)$, then an adjustment to the function $f$ is possible to achieve the same result we get here, but only if $s=\pm\lambda c$. Otherwise the conjugate of $\eta$ retains dependency on $x$.
%			It's possible that in considering this situation when $s\neq\pm\lambda c$, in the situation of a more elaborate $\gamma$, featuring its own $\beta,b$ terms, or in the situation that $\eta,\zeta$ are more elaborate, that we gain the ability to act cocompactly in the $v$ direction without losing proper discontinuity, but it is also possible that doing so only obfuscates the behaviour demonstrated in this example.
		\end{example}

		\begin{example}[Compact homothetic quotient of an open subset]\label{ex_removed-homoth-fixed-points}
			In this example, we produce a compact quotient of an open submanifold of a Cahen-Wallach space  by homotheties. 

			Consider the metric $g_S$ on 
 $U:= \R^{n+2}\setminus\{(t,0,0)\ |\ t\in\R\}$. We have removed all fixed points of a pure homothety, allowing us to use a pure homothety to compactify:
			set 
			$$\gamma\begin{pmatrix}t\\\veccy x\\ v\end{pmatrix}:= \begin{pmatrix}t+1\\ \veccy x \\ v\end{pmatrix}, \qquad \eta\begin{pmatrix}t\\\veccy x\\ v\end{pmatrix}:= \begin{pmatrix}t\\ 2\veccy x \\ 4 v\end{pmatrix},$$
			and $\Gamma:= \<\gamma,\eta\>$.
			We now show that $\Gamma$ acts properly discontinuously and cocompactly on $U$.

			A fundamental region for this action is a product of the unit interval  and an annulus in the last $n+1$ dimensions.
			Define
			$$R:= (0,1)\times \left((-2,2)^n\times(-4,4)\right)\setminus[-1,1]^{n+1}.$$
It is not hard to see that $\phi(R)$,  $\phi\in \Gamma$ does not meet $R$, 
%as follows: First note that $\gamma$ and $\eta$ commute. Let $p:(t,\veccy x,v)\mapsto (\veccy x,v)$, then
%\[
%				t(\gamma^i\eta^jR) = (i,i+1),\qquad
%				p(\gamma^i\eta^jR) = \left((-2^{j+1},2^{j+1})^n\times(-4^{j+1},4^{j+1})\right)\setminus[-2^j,2^j]^{n+1}.
%\]
%			Hence, since for $i,j$ not both zerowe have that 
%			 \[t(\gamma^i\eta^jR)\cap t(R)=\emptyset, \quad\text{and}\quad p(\gamma^i\eta^jR)\cap p(R) = \emptyset,\] it must be that $\gamma^i\eta^jR\cap R=\emptyset$. Hence $gR\cap R=\emptyset$.
%			Note that
%			$$\overline R= [0,1]\times \left([-2,2]^n\times[-4,4]\right)\setminus(-1,1)^{n+1}.$$
%			Then the $\gamma^i\eta^j\overline R$ cover $U$ (the fact that the line such that $\veccy x=0$, $v=0$ is removed to make $U$ is important here).
%
			so $R$ is a fundamental region.
			We take a neighbourhood $V$ of $\overline R$:
			$$V:=(-1,2)\times \left((-4,4)^n\times(-16,16)\right)\setminus\left([-\tfrac12,\tfrac12]^n\times[-\tfrac14,\tfrac14]\right).$$
			Note that $\phi(V)$ meets $V$ only for
			$$\{\gamma^i\eta^j\ |\ i,j\in\{-2,-1,0,1,2\}\}.$$
			Hence $R$ is finitely self adjacent.
			In particular, by results in \cite{FSApaper,stuart-thesis}, $R$ is locally finite, so $U/\Gamma$ is homeomorphic to $\overline R/\Gamma$. Then, since $\overline R/\Gamma$ is a manifold  we get that $\Gamma$ acts properly discontinuously and since $\overline R$ is compact, $\Gamma$ also acts cocompactly.
			Hence the action on the open submanifold $U$ is properly discontinuous and cocompact. However, the homotheties centralised by $\Gamma$ are not essential. We have
			$$\cent_{H_S}(\Gamma) =
			\R\times C_{\O(n)}(S)\times \R
$$
and 			define
			$$f(t,\veccy x, v) = (||\veccy x||^4 + (v)^2)^{-1/2}.$$
			Then for $\phi\in \cent_{H_S}(\Gamma)$,
\[
				\phi^*(fg)|_x 
%				= f(\phi(x)) \phi^*g|_x
					= (||\e^sA\veccy x||^4 + (\e^{2s}v)^2)^{-1/2} \e^{2s}g|_x
					= (||\veccy x||^2 + (v)^2)^{-1/2} g|_x
					= (fg)|_x,
\]
			so $\phi$ is inessential on $U$. Note that this same choice of $f$ works for all such $\phi$, and thus $\cent_{H_S}(\Gamma)$ is inessential.
			In this example we can go further and conclude that the normaliser of $\Gamma$ is inessential as well, since  $\phi\gamma^r\eta^t = \gamma^{r'}\eta^{t'}\phi$ implies already that $t=t'$, and that $r' = ar$. Hence, we see that the normaliser is simply $\E(1)\times C_{\O(n)}(S)\times \R$, and the same $f$ as before makes the normaliser inessential.

			We stress that this is not a proof that $U/\Gamma$ has an inessential conformal structure: it is possible to have an essential transformation on the quotient which does not lift or whose lift is not essential, or  a transformation may be preserved without normalising $\Gamma$. For details see \cite[Section~5.4]{stuart-thesis}.
		\end{example}

\begin{remark} In this final remark we address the fact that 
our results are about compact quotients of a {\em complete} Cahen-Wallach space $(\R^{n+2},g_S)$, whereas
 the construction in \cite{frances12} starts with an {\em incomplete} locally symmetric space that has the origin removed.
For this, note that every non-isometric  conformal transformation of a conformally curved Cahen-Wallach space is a homothety and hence either has no fixed points or it has a line of finite-orbit points parameterised by $t$.
Now assume that we have  two strict  homotheties $\gamma$ and  $\phi$,  each with finite-orbit points, such that $\phi$ descends to the quotient by a group $\Gamma$ containing $\gamma$. Then $\gamma$ and  $\phi$  must have the same line of finite-orbit points, since $\phi$ must map finite-orbit points of $\gamma$ to finite-orbit points of $\gamma$. Consequently, 
 if $\Gamma $ is acting  on an open subset of of a Cahen-Wallach space that has the finite-orbit points removed, so that it acts properly discontinuous, then also  $\phi$ has had its finite-orbit points removed and therefore can no longer be expected to be essential.
\end{remark}

\bibliographystyle{abbrv}
\bibliography{GEOBIB}
\end{document}